\def\ps@pprintTitle{%
 \let\@oddhead\@empty
 \let\@evenhead\@empty
 \def\@oddfoot{}%
 \let\@evenfoot\@oddfoot}
\crefname{subsection}{subsection}{subsections}
\theoremstyle{plain}
\newtheorem{thm}{Theorem}[section]
\newtheorem*{thm*}{Theorem}
\newtheorem{lem}[thm]{Lemma}
\newtheorem{cor}[thm]{Corollary}
\newtheorem{prp}[thm]{Proposition}
\theoremstyle{definition}
\newtheorem{dfn}[thm]{Definition}
\newtheorem{ntn}[thm]{Notation}
\newtheorem{con}[thm]{Convention}
\newtheorem{rem}[thm]{Remark}
\newtheorem{exa}[thm]{Example}
\newtheorem{prb}[thm]{Problem} 
\newtheorem{ass}[thm]{Assumption}
\newcounter{algorithmbis}
\renewcommand{\thealgorithmbis}{\thesection.\arabic{thm}}
\def\algorithmbis{\@ifnextchar[{\@algorithmbisa}{\@algorithmbisb}}
\def\@algorithmbisa[#1]{%
 \refstepcounter{thm}
 \trivlist
 \leftmargin\z@
 \itemindent\z@
 \labelsep\z@
 \item[\parbox{\textwidth}{%
 \hrule 
 \vspace{0.1cm}
 \noindent\strut\textbf{Algorithm \thealgorithmbis} #1
 \vspace{0.1cm}
 \hrule 
 }]\hfil\vskip0em%
} 
\def\@algorithmbisb{\@algorithmbisa[]}
\newcommand{\ideal}[1]{\left\langle#1\right\rangle}
\newcommand{\lideal}[2]{\prescript{}{#1}{\left\langle#2\right\rangle}}
\newcommand{\rideal}[2]{{\left\langle#2\right\rangle_{#1}}}
\newcommand{\lrideal}[2]{\text{}_{#1}{\left\langle#2\right\rangle\text{}_{#1}}}
\newcommand{\ul}[1]{\underline{#1}}
\newcommand{\ol}[1]{\overline{#1}}
\newcommand{\bracket}[1]{[\mathbf{#1}]}
\DeclareMathOperator{\gr}{Gr}
\DeclareMathOperator{\lE}{le}
\DeclareMathOperator{\lt}{lt} 
\DeclareMathOperator{\tail}{tail}
\DeclareMathOperator{\lcomp}{lcomp}
\DeclareMathOperator{\spoly}{spoly}
\DeclareMathOperator{\syz}{syz}
\DeclareMathOperator{\End}{End}
\DeclareMathOperator{\ZZ}{\mathbb{Z}}
\DeclareMathOperator{\RR}{\mathbb{R}}
\DeclareMathOperator{\CC}{\mathbb{C}}
\DeclareMathOperator{\NN}{\mathbb{N}}
\DeclareMathOperator{\KK}{\mathbb{K}}
\DeclareMathOperator{\QQ}{\mathbb{Q}}
\DeclareMathOperator{\K}{\mathbb{K}}
\DeclareMathOperator{\Mon}{{Mon}}
\DeclareMathOperator{\SMon}{{SMon}}
\DeclareMathOperator{\lm}{{lm}}
\DeclareMathOperator{\lc}{{lc}}
\DeclareMathOperator{\lec}{{ele}} 
\DeclareMathOperator{\mO}{\mathcal{O}}
\DeclareMathOperator{\mD}{\mathcal{D}}
\DeclareMathOperator{\irr}{irr}
\DeclareMathOperator{\mM}{\mathcal{M}}
\DeclareMathOperator{\Tn}{\K\langle \ul x\rangle}
\DeclareMathOperator{\Pn}{\K[\ul x]}
\begin{document}

\begin{frontmatter}

\title{Gröbner basics for mixed Hodge modules}

\author{Cornelia Rottner}
\address{TU Kaiserslautern\\
Department of Mathematics\\
67663 Kaiserslautern\\
Germany}
\ead{rottner@mathematik.uni-kl.de}
\ead[url]{http://www.mathematik.uni-kl.de/~rottner}

\author{Mathias Schulze}
\address{TU Kaiserslautern\\
Department of Mathematics\\
67663 Kaiserslautern\\
Germany}
\ead{mschulze@mathematik.uni-kl.de}
\ead[url]{http://www.mathematik.uni-kl.de/~mschulze}

\begin{abstract}
We develop a Gröbner basis theory for a class of algebras that
generalizes both PBW-algebras and rings of differential algebras on
smooth varieties.
Emphasis lies on methods to compute filtrations and graded structures
defined by weight vectors.
The approach is tailored for bifiltered $\mD$-modules satisfying properties
of mixed Hodge modules.
As a key ingredient in functors of such modules our theory applies to
compute the order filtration on pieces of a V-filtration.
\end{abstract}

\begin{keyword}
Gröbner basis, D-module, Hodge module, PBW-algebra
\MSC[2010] 13P10 \sep 14F10 \sep 16W70 \sep 32S35
\end{keyword}

\end{frontmatter}

\section*{Introduction}

Most algorithms in algebraic $\mD$-module theory are based on translating $\mD$-module theoretic constructions to computationally accessible operations over the ring of differential operators on the affine $n$-space which agrees with the Weyl algebra (see e.g. \cite{OT2001}). However, this approach limits the underlying varieties often to affine $n$-spaces. To deal with such constructions for general smooth varieties $X$, we work on a covering of $X$ by affine open sets $U$ on which the tangent sheaf is $\mO_U$-free and glue the results. Each such $U$ can be seen as a closed set in an affine $n$-space and we lift a basis of the tangent sheaf to elements $y_1,\dots,y_m$ of the Weyl algebra. 
Then $\mD_X(U)$ becomes the factor algebra of the free associative $\CC$-algebra $\CC\langle x_1,\dots,x_n,y_1,\dots,y_m\rangle$ by the two-sided ideal generated by the defining ideal $I(U)\subseteq \CC[x_1,\dots,x_n]$ of $U$ and the natural commutation relations of the variables. We refer to such an algebra as a coordinate system algebra.
As opposed to Weyl algebras, coordinate system algebras are in general not quotients of PBW-algebras. 
While there is a Gröbner basis theory for PBW-algebras, we are not aware of a well-developed generalization covering coordinate system rings.
We remark that algorithms by \cite{OakuAffine} for coordinate system algebras are not sufficiently general for our purpose (see \Cref{935}).

In this article we develop a Gröbner basis theory for so-called PBW-reduction-algebras which form a common generalization of PBW-algebras and coordinate system algebras (see \Cref{800}). Combined with gluing techniques this allows for $\mD$-module calculations on general smooth varieties \citep{Rottner}. 
Such an algebra is a certain quotient of a free associative $\K$-algebra of type $\K\langle x_1,\dots,x_n\rangle$ 
by a two-sided ideal containing commutation relations
with the property that a subset of the set of standard monomials $\{x_1^{\alpha_1}\cdots x_n^{\alpha_n}\mid \alpha\in \NN^n\}$ forms a $\K$-basis. Our Gröbner basis methods rely on so-called PBW-reduction data. While their existence is guaranteed, 
we know only in special cases how to compute them. We describe a method in case of quotients of PBW-algebras and coordinate system algebras. 
Generalizing the underlying notions of standard representation and $s$-polynomial, we give a Buchberger criterion for PBW-reduction-algebras for well-orderings. As a consequence we obtain Gröbner basics for PBW-reduction-algebras.

Our methods are designed for application to mixed Hodge modules as defined by \cite{Saito2}. These objects can be considered as bifiltered $\mD_X$-modules $\mM$ with an order and a $V$-filtration satisfying certain compatibilities. A technical ingredient in the construction of Hodge theoretic functors such as the direct image functor and the vanishing cycle functor is the induced order filtration on the $V_0\mD_X$-submodule $V_\alpha \mM$ for $\alpha\in \QQ$.
Our goal to compute this filtration gives the direction for the following sections.
On a suitable coordinate system algebra $D$ the two filtrations are induced by weight vectors $\mathbf{w}$ and $\mathbf{v}$ on the algebra generators $x_1,\dots,x_n$ and $\mM(U)$ can be represented as a quotient $D^E/Q$ with an induced order filtration. 

In more generality, we consider a PBW-reduction-algebra $A$ with two filtrations $F_\bullet^\mathbf{v} A$ and $F_\bullet^\mathbf{w} A$ given by weight vectors $\mathbf{v}$ and $\mathbf{w}$. 
Motivated by our problem in Hodge theory we impose among other conditions the inclusions of algebras $F_0^\mathbf{w} A\subseteq F_0^\mathbf{v} A\subseteq A$.
Given a finitely presented $A$-module $M$ with induced $F_\bullet ^\mathbf{w}A$-filtration $F^\mathbf{w}_\bullet M$ and an $F_0^\mathbf{v}A$-submodule $V$ of $M$, our main algorithm (see \Cref{431}) yields the $F_\bullet^\mathbf{w}F_0^\mathbf{v}A$-submodule filtration $F_\bullet^\mathbf{w} V$ on $V$ if it is a good filtration. 
It computes for increasing integers $k$ the intersection $F_k^\mathbf{w} V$ of the $F_0^\mathbf{v}A$-submodule $V$ and the $F_0^\mathbf{w}A$-submodule $F_k^\mathbf{w}M$ (see \Cref{311}). A stopping criterion (\Cref{524}) serves to check whether $F_k^\mathbf{w} V$ already generates $F_\bullet^\mathbf{w} V$.
Using a common bound for the $\mathbf{v}$-degree of $V$ and $F_k^\mathbf{w} M$, the computation of their intersection
is reduced to the case where $M$ is a free $A$-module (see \Cref{397}). 
Here we reformulate it as an intersection problem over the subalgebra $F_0^\mathbf{v} A$ of $A$. We show that $F_0^\mathbf{v} A$ is again a PBW-reduction-algebra and require $F^\mathbf{w}_\bullet F_0^\mathbf{v} A$ to be a weight filtration on it. We may thus assume that $F^\mathbf{v}_0 A=A$. Finally, we want to intersect an $A$-submodule and an $F^\mathbf{w}_0 A$-submodule of a free $A$-module. This is achieved by a syzygy computation combined with a Gröbner basis computation with respect to a $\mathbf{w}$-degree ordering. In general this might not be a well-ordering. To address this issue, we describe homogenization techniques for PBW-reduction-algebras (see \Cref{368}).

The results presented in this article originate from the Ph.D.~thesis of the first named author \citep{Rottner} which was supervised by the second named author.

\section{Gröbner basis framework for PBW-reduction-algebras}\label{800}

\subsection{PBW-reduction-algebras}\label{801}

We first fix some notation: By $E$ we usually denote a finite set of module generators on which we pick a total order $<^E$.
Given a ring $R$ and a left $R$-module $M$, we write $M^E$ for the direct sum $\bigoplus_{e\in E} M(e)$, where $(e)$ stands for the free generator corresponding to $e\in E$. We identify the $R$-modules $R$ and $R^E$ in case $E$ has cardinality one. For a subset $S\subseteq M$ we define similarly $S^E:=\{s(e)\mid s\in S,e\in E\}\subseteq M^E$. A map of $R$-modules $\psi:M\to N$ induces naturally a map $\psi^E:M^E\to N^E$. 
Given finite sets $E_1,\dots, E_s$ and $1\leq i_1<\dots <i_l\leq s$, we denote by $\pi_{E_{i_1},\dots,E_{i_l}}$ the projection of $R^{E_{1}}\oplus\dots\oplus R^{E_{s}}$ to $R^{E_{i_1}}\oplus \dots\oplus R^{E_{i_l}}$.
Given a left, right and two-sided $R$-module $M$ and a subset $N\subseteq M$, we denote by $\lideal{R}{N}$, $\rideal{R}{N}$ and $\lrideal{R}{N}=\ideal{N}$ the left, right and two-sided $R$-submodule of $M$ generated by $N$, respectively. Unless said otherwise, we mean by an $R$-module always a left $R$-module.

Throughout we denote by $\K$ a field and by $x_1,\dots,x_n$ variables. We consider the polynomial ring $\K[\ul x]:= \K[ x_1,\dots, x_n]$ as $\K$-submodule of the free associative $\K$-algebra $\Tn:= \K\ideal{ x_1,\dots, x_n}$.


\begin{dfn}\label{803}
Let $E$ be a finite set.
\begin{enumerate}
\item\label{803a} We denote by
\[
 \operatorname{Mon}(\K[\ul x]^E):=\{x_1^{\alpha_1}\cdots x_n^{\alpha_n} (e)\mid \alpha\in \NN^n, e\in E\}\subseteq \K[\ul x]^E
\]
and
\[
 \operatorname{Mon}(\Tn^E):=\{x_{i_1}\cdots x_{i_k}(e)\mid k\in \NN, 1\leq i_1,\dots, i_k\leq n, e\in E\}\subseteq \Tn^E
\]
the set of \emph{monomials} of $\K[\ul x]^E $ and $\Tn^E$, respectively. The set of \emph{standard monomials} $\SMon(\Tn^E)$ is defined as $\operatorname{Mon}(\K[\ul x]^E)$ considered as a subset of $ \operatorname{Mon}(\Tn^E)$.
Abbreviating $\ul x^\alpha (e):=x_1^{\alpha_1}\cdots x_n^{\alpha_n} (e)$ for $e\in E$ and $\alpha\in \NN^n$, we often write $p\in \Pn^E$ in multi-index notation $p=\sum_{e,\alpha} p_{e,\alpha}\ul x^\alpha (e)$ where $p_{e,\alpha}\in \KK$ denotes the coefficient of $\ul x^\alpha (e)$. 
\item\label{803d}
A total order $\prec$ on $\operatorname{Mon}(\Tn^E)$ is called a \emph{monomial ordering} if 
 \begin{itemize}
 \item[(a)]\label{803dii} $m(e)\prec m'(e')$ implies $pmq(e)\prec pm'q(e')$ for $m,m',p,q\in\operatorname{Mon}(\Tn)$ and $e,e'\in E$.
 \end{itemize}
Similarly, a total order $\prec$ on $\operatorname{SMon}(\Tn^E)$ is called a \emph{monomial ordering} if 
 \begin{itemize}
 \item[(a')]\label{803eii} $\ul x^\alpha (e)\prec \ul x^{\alpha'} (e')$ implies $\ul x^{\alpha+\gamma} (e)\prec \ul x^{\alpha'+ \gamma} (e')$ for all $\alpha,\alpha',\gamma \in \NN^n$ and $e,e'\in E$.
 \end{itemize}
A monomial ordering $\prec$ on $\operatorname{(S)Mon}(\Tn^E)$ satisfying additionally
 \begin{itemize}
 \item[(b)]\label{803di} $(e)\preceq m(e)$ for all $m\in \operatorname{(S)Mon}(\Tn)$ and $e\in E$ 
 \end{itemize}
 is called a \emph{(monomial) well-ordering}. Otherwise we say that it is a \emph{(monomial) non-well-ordering}.

 \item\label{803f} Let $\prec$ be a monomial ordering on $\operatorname{(S)Mon}(\Tn^E)$. If $0\neq t=\sum_{e\in E,m\in \operatorname{(S)Mon}(\Tn)} t_{e,m} m(e)\in \Tn^E$ with $t_{e,m}\in \KK$ and $m'(e'):=\max_\prec \{m(e)\mid t_{e,m}\neq 0\}$, then we define
 \begin{itemize}
 \item $\lm_{\prec}(t):=m'(e')$, the \emph{leading monomial} of $t$,
 \item $\lt_{\prec}(t):=t_{e',m'} m'(e')$, the \emph{leading term} of $t$,
 \item $\lc_{\prec}(t):=t_{e',m'}$, the \emph{leading coefficient} of $t$,
 \item $\lcomp_{\prec}(t):=e'$, the \emph{leading component} of $t$,
 \item $\tail_{\prec}(t):=t-\lt_\prec(t)$, the \emph{tail} of $t$,
 \item $\lE_\prec(t):=\sum_{1\leq j\leq k} e_{i_j}\in \NN^n$ if $m'=x_{i_1}\cdots x_{i_k}$, the \emph{leading exponent} of $t$,
 \item $\lec_{\prec}(t):=(\lE_\prec(t),e')$, the \emph{extended leading exponent} of $t$,
 \end{itemize}
 where $e_{i_j}\in \ZZ^n$ stands for the $i_j$th unit vector. 
For a subset $G\subseteq \Tn^E$ we consider the sets
\begin{itemize}
\item $
\operatorname{l}_{\prec}(G):=\{\lec_{\prec}(g)\mid g\in G\setminus \{0\}\}\subseteq \NN^n\times E,
$
\item $
\operatorname{L}_{\prec}(G):=\{\beta+\lec_{\prec}(g)\mid g\in G\setminus \{0\}, \beta\in \NN^n\}\subseteq \NN^n\times E,
$
\end{itemize}
where we define $\beta+ (\alpha,e):=(\alpha+\beta,e)$ for $\alpha,\beta\in \NN^n$ and $e\in E$.
 
 To simplify notation later on, we extend the ordering by setting $\lm_\prec (0)\prec \lm_\prec (t)$ and $\lm_\prec (0)\preceq \lm_\prec (t')$ 
 for all $t,t'\in \lideal{\K}{\operatorname{(S)Mon}(\Tn^E)}$ with $t\neq 0$. 
We denote by $\prec$ also the ordering induced by $\prec$ on $\NN^n\times E$ via the mapping $(\alpha,e)\mapsto \ul x^\alpha(e)$ and adopt an analogous convention for $\lE_{\prec}(0)$ and $\lec_{\prec}(0)$. Similarly, we define by abuse of notation $\alpha+\lE_{\prec}(0):=\lE_{\prec} (0)$ and $\alpha+\lec_{\prec}(0):=\lec_{\prec} (0)$ for any $\alpha\in \NN^n$. 

 We sometimes omit the index $\prec$ if it is clear from the context.
 \end{enumerate}
\end{dfn} 

 
\begin{rem}\label{369}
Monomial orderings on $\SMon(\Tn)=\Mon(\K[\ul x]) $ are the monomial orderings of commutative Gröbner basis theory.
\end{rem}


\begin{rem}\label{804}Let $E$ be a finite set.
\begin{enumerate}
\item\label{804a} 
Clearly the ordering defined by 
 \begin{align*}
x_{i_1}\cdots x_{i_k} \prec' x_{j_1}\cdots x_{j_l} \text{ if and only if } & k<l\\
 \text{or } & k=l \text{ and } (i_1,\dots, i_k)<_{\text{lex}} (j_1,\dots, j_k)
\end{align*}
is a monomial well-ordering on $\operatorname{Mon}(\Tn)$. Note that $\ul x^{\operatorname{le}(x_{i_1}\cdots x_{i_k})}\preceq x_{i_1}\cdots x_{i_k}$.
\item\label{804b}
Refine a monomial ordering $\prec$ on $\SMon(\Tn^E)$ by a monomial ordering $\prec'$ on $\operatorname{Mon}(\Tn^E)$ to a monomial ordering $(\prec,\prec')$ on $\operatorname{Mon}(\Tn^E)$ by setting 
 \begin{align*}
x_{i_1}\cdots x_{i_k}(e)\, (\prec,\prec')\, x_{j_1}\cdots x_{j_l}(e') \text{ if and only if } &\lec_{\prec'}(x_{i_1}\cdots x_{i_k}(e)) \prec \lec_{\prec'}(x_{j_1}\cdots x_{j_l}(e'))\\
\text{ or } &\lec_{\prec'}(x_{i_1}\cdots x_{i_k}(e))= \lec_{\prec'}(x_{j_1}\cdots x_{j_l}(e'))\\ &\text{ and } x_{i_1}\cdots x_{i_k} \prec' x_{j_1}\cdots x_{j_l}.
 \end{align*}
We denote also by $\prec$ the special ordering $ (\prec,\prec')$ with $\prec'$ from Part~\ref{804a}.
 
 \item\label{804c} Let $\prec$ be a monomial ordering on $\operatorname{(S)Mon}(\Tn^E)$. Then any $e\in E$ defines a monomial ordering $\prec_e$ on $\operatorname{(S)Mon}(\Tn)$ by
\[
 x_{i_1}\cdots x_{i_k} \prec_e x_{j_1}\cdots x_{j_l} \text{ if and only if } x_{i_1}\cdots x_{i_k}(e) \prec x_{j_1}\cdots x_{j_l} (e).
\]
\end{enumerate}
\end{rem}
 

Eventually, we will restrict ourselves to monomial orderings on $\SMon(\Tn^E)$ and refine them to orderings on $\Mon(\Tn^E)$ as outlined in \Cref{804}.\ref{804b} above if necessary. The following \namecref{814} lists some of the orderings on $\SMon(\Tn^E)$ which we will use frequently throughout this article:


\begin{rem}\label{814} Let $E_1,\dots,E_s$ and $E$ be finite sets. 
 \begin{enumerate}
 
 \item\label{814c}An ordering $\prec$ on $\SMon(\Tn)$ and a total order $<^E$ on $E$ induce the following orderings: 
 \begin{enumerate}
 \item The \emph{term over position ordering} (TOP-ordering) on $\SMon(\Tn^E)$:
 \begin{align*}
 \ul x^\alpha (e)\, ( \prec,<^E)\, \ul x^\beta (e') \text{ if and only if } & \ul x^\alpha \prec \ul x^\beta\\
 \text{or } & \ul x^\alpha= \ul x^\beta \text{ and } e<e',
 \end{align*}
 where $\alpha,\beta\in \NN^n$ and $e,e'\in E$.
 \item The \emph{position over term ordering} (POT-ordering) on $\SMon(\Tn^E)$:
 \begin{align*}
 \ul x^\alpha (e)\, (<^E,\prec)\, \ul x^\beta (e') \text{ if and only if } & e<e'\\
 \text{or } & e=e' \text{ and } \ul x^\alpha\prec \ul x^\beta,
 \end{align*}
 where $\alpha,\beta\in \NN^n$ and $e,e'\in E$.
 \end{enumerate}
 These orderings are well-orderings if and only if $\prec$ is a well-ordering. 
 
 \item\label{814d} 
Orderings $\prec_1^{E_1},\dots,\allowbreak \prec_s ^{E_s}$ on $\SMon(\Tn^{E_1}),\dots,\SMon(\Tn^{E_s})$, respectively, define the \emph{(module) block ordering} $(\prec_1^{E_1},\dots, \prec_s^{E_s})$ on $\SMon(\Tn^{E_1\sqcup\dots\sqcup E_S})$
 by
 \begin{align*}
 \ul x^\alpha (e)\, (\prec_1^{E_1},\dots, \prec_s^{E_s}) \, \ul x^{\beta} (e') \text{ if and only if, for $e\in E_i, e'\in E_j$, } & i>j\\
 \text{ or } & i=j \text{ and } \ul x^\alpha (e) \prec^{E_i}_{i} \ul x^{\beta} (e'),
 \end{align*}
where $\alpha,\beta\in \NN^n$.
 Notice that $(\prec_1^{E_1},\dots, \prec_s^{E_s}) $ is a well-ordering if and only if all $\prec_i^{E_i}$ are well-orderings. 
 \end{enumerate}
\end{rem}


\begin{con}\label{348} Let $E_1,\dots,E_s$ and $E$ be finite sets.
 If we write from now on $\prec^E$, we implicitly assume that $\prec^E$ is some ordering on $\SMon(\Tn^E)$. Similarly, $(\prec_1^{E_1},\allowbreak\dots, \prec_s^{E_s})$ always denotes a block ordering on $\SMon(\Tn^{E_1\sqcup\dots\sqcup E_s})$. 
 
We identify $\Tn^{E_1}\oplus \dots \oplus \Tn^{E_s}\cong \Tn^{E_1\sqcup\dots\sqcup E_s}$
to define the set of (standard) monomials of the former module as well as monomial orderings on them.
\end{con}


The following notions rely on the work by \cite{Bergman}:


\begin{dfn}\label{805}
Let $E$ be a finite set, $\prec$ a monomial ordering on $\Mon(\Tn^E)$. For $s\in \Tn^E\setminus \{0\}$ and $m,m'\in \Mon(\Tn)$ we define the $\KK$-linear \emph{reduction (map) (with respect to $(S,\prec)$)}
\[
\rho_{m,s,m'}:\Tn^E\to \Tn^E,\;
x_{i_1}\cdots x_{i_l}(e)\mapsto 
\begin{cases}
m(-\frac{1}{\lc_\prec(s)}\tail_\prec(s))m'& \text{if } x_{i_1}\cdots x_{i_l}(e)=m\lm_\prec(s)m',\\
x_{i_1}\cdots x_{i_l}(e)& \text{otherwise.}
\end{cases}
\]
Let $S\subseteq \Tn^E $ be a subset. A finite composition of such maps with $s\in S$ is called a \emph{reduction sequence} (with respect to $(S,\prec)$).
\begin{enumerate}
\item\label{805d} For a reduction sequence $\rho$ with respect to $(S,\prec)$
we say that $t\in \Tn^E$ reduces to $\rho(t)$, a \emph{reduction} of $t$ (with respect to $(S,\prec)$).
\item\label{805b} We call $t\in \Tn^E$ \emph{irreducible} (with respect to $(S,\prec)$) if $\rho(t)=t$ for all reductions $\rho$. 
Such elements generate the $\K$-submodule $(\Tn^E)^{\irr}_{S,\prec}\subseteq \Tn^E$. 
\item\label{805e} 
Suppose that $\prec$ is a special ordering as in \Cref{804}.\ref{804b}.Then we call $(S,\prec)$ a \emph{commutation system} if $S=\{s_{i,j,e}\mid 1\leq i<j\leq n,e\in E\}$ where
$$s_{i,j,e}:=x_jx_i(e)-c_{ij}x_ix_j(e)-d_{ij}\text{ with } \lm_\prec(d_{ij})\prec x_ix_j(e)\prec x_jx_i(e),$$ 
$c_{ij}\in \KK^*$ and $d_{ij}\in \Pn^E$, and if every element in $\Tn^E$ reduces to an element in $\Pn^E$.
We refer to the elements of $S$ as commutation relations and to $\rho_{m, s_{i,j,e} ,m'}$ as a \emph{commutation reduction}. 

We say that an ordering $\prec'$ is \emph{compatible} with $(S,\prec)$ 
if $(S,\prec')$ is a commutation system.

\end{enumerate}
\end{dfn}


\begin{rem}\label{807}
Let $E$ be a finite set, $S\subseteq \Tn^E$ and $\prec$ a monomial ordering.
\begin{enumerate}
 \item\label{807b} If $(S,\prec)$ is a commutation system and $S\subseteq S'\subseteq \Tn^E$, 
 then $(\Tn^E)^{\irr}_{S',\prec}\subseteq \Pn^E=(\Tn^E)^{\irr}_{S,\prec}$. 
 
 \item\label{807a} If $\prec$ is a well-ordering, every element of $\Tn^E$ reduces to an irreducible element with respect to $(S,\prec)$. If in addition $S$ is a two-sided submodule, then this reduction is unique. This identifies $(\Tn^E)^{\irr}_{S,\prec}=\Tn^E/S$ as $\K$-vector spaces and we write
 \[
 \rho_{S,\prec}:\Tn^E\to (\Tn^E)^{\irr}_{S,\prec}
 \]
for the map sending elements to their unique irreducible reduction. 

 \item\label{807c} If $(S,\prec)$ is a commutation system and $\prec$ a well-ordering, we can determine for an element $p\in \Tn$ a finite set $U\subseteq \Tn \times S\times \Tn$ such that
\[
 p=\rho_{S,\prec'}(p)+\sum_{(t,s,t')\in U} tst' \text{ with } \lm_{\prec'}(\rho_{S,\prec'}(p))=\lm_{\prec'}(p) \text{ and }\lm_{\prec'}(tst') \preceq' \lm_{\prec'}(p)
\]
for all orderings $\prec'$ compatible with $(S,\prec)$. In particular $\rho_{S,\prec}=\rho_{S,\prec'}$.
\end{enumerate}
\end{rem}


We are particularly interested in the following class of $\KK$-algebras:


\begin{dfn}\label{808} 
A \emph{PBW-reduction-datum} $(\Tn,S,I,\prec)$ consists of a commutation system $(S,\prec)$, where $\prec$ is a well-ordering 
and $I\subseteq \K[\ul x]$ a finite set such that 
 \begin{equation}\label{808b}
\operatorname{L}_\prec(I)=\operatorname{l}_\prec(\ideal{I\cup S}\cap \K[\ul x]).
\end{equation}
It defines a $\K$-algebra $\Tn/\ideal{I\cup S}$, 
which we call a \emph{PBW-reduction-algebra} and write by abuse of notation
\[
 \Tn/\ideal{I\cup S}=(\Tn,S,I,\prec).
\]
We say that a monomial (well-)ordering $\prec'$ is a (monomial) (well-)ordering on $ \Tn/\ideal{I\cup S}$ if $\prec'$ is compatible with $(S,\prec)$. 
\end{dfn}


\begin{rem}\label{837}
For a commutation system $(S,\prec)$ and $I\subseteq \Pn$ we have that 
\[
 \operatorname{l}_\prec(\ideal{I\cup S}\cap \K[\ul x])=\operatorname{L}_\prec(\ideal{I\cup S}\cap \K[\ul x]).
\]
Indeed, $\rho_{S,\prec}(\ul x^\alpha r)\in \ideal{I\cup S}\cap \KK[\ul x]$ with $\lm_\prec(\rho_{S,\prec}(\ul x^\alpha r))=\alpha+\lm_\prec(r)$ for $r\in \ideal{I\cup S}\cap \K[\ul x]$ and $\alpha\in \NN^n$.
In particular the inclusion $\subseteq$ in \ref{808}\eqref{808b} is always satisfied. This makes Condition~\ref{808}\eqref{808b} equivalent to
\[
 \operatorname{L}_\prec(I)=\operatorname{L}_\prec(\ideal{I\cup S}\cap \K[\ul x]).
\]
\end{rem}


\begin{lem}\label{n1}
For a PBW-reduction-datum $(\Tn,S,I,\prec)$ we can identify the $\K$-vector spaces 
\[
 \Tn/\ideal{I\cup S}=\Tn^{\irr}_{\ideal{I\cup S},\prec}=\bigoplus_{\alpha\notin \operatorname{L}_\prec(I)} \KK\ul x^\alpha
\]
\end{lem}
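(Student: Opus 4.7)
The plan is to establish the two identifications in turn. For the first, $\Tn/\ideal{I\cup S} = \Tn^{\irr}_{\ideal{I\cup S},\prec}$, I would invoke \Cref{807}.\ref{807a}: the well-ordering $\prec$ together with the two-sidedness of $\ideal{I\cup S}$ forces a unique irreducible reduction of every element of $\Tn$, so the irreducibles form a canonical system of coset representatives. For the second identification, \Cref{807}.\ref{807b} first yields $\Tn^{\irr}_{\ideal{I\cup S},\prec}\subseteq\Pn$. Since reductions are $\K$-linear and act on distinct standard monomials without cancellation (any nontrivial $\rho_{m,s,m'}$ changes a targeted standard monomial by the nonzero element $\lc_\prec(s)^{-1}msm'$), an element of $\Pn$ is irreducible if and only if every standard monomial in its support is. It thus suffices to prove that $\ul x^\alpha$ is irreducible with respect to $(\ideal{I\cup S},\prec)$ exactly when $\alpha\notin\operatorname{L}_\prec(I)$.

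The forward direction, $\alpha\in\operatorname{L}_\prec(I)\Rightarrow \ul x^\alpha$ reducible, is immediate: by \Cref{837} and condition \eqref{808b} of \Cref{808} there exists $r\in\ideal{I\cup S}\cap\Pn$ with $\lm_\prec(r)=\ul x^\alpha$, so the reduction $\rho_{1,r,1}$ acts nontrivially on $\ul x^\alpha$.

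The converse is the technical heart. Given $s\in\ideal{I\cup S}\setminus\{0\}$ and $m,m'\in\operatorname{Mon}(\Tn)$ with $\ul x^\alpha=m\lm_\prec(s)m'$ as words in the free algebra, I first observe that $\lm_\prec(s)$ is a subword of the standard monomial $\ul x^\alpha$ and hence itself a standard monomial $\ul x^\gamma$ with $\gamma\le\alpha$ componentwise. Next I pass to $r:=\rho_{S,\prec}(s)\in\Pn$; each commutation reduction alters the element by a member of $\ideal{S}$, so $r-s\in\ideal{S}$ and therefore $r\in\ideal{I\cup S}\cap\Pn$. Crucially $\lm_\prec(r)=\ul x^\gamma$: the standard monomial $\ul x^\gamma$ in $s$ is never the target of a commutation reduction, while every non-standard monomial in $s$ is strictly $\prec\ul x^\gamma$ and all of its commutation reductions remain strictly $\prec\ul x^\gamma$. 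Hence $\gamma\in\operatorname{l}_\prec(\ideal{I\cup S}\cap\Pn)=\operatorname{L}_\prec(I)$ by \Cref{837} and condition \eqref{808b}, and therefore $\alpha=\gamma+(\alpha-\gamma)\in\operatorname{L}_\prec(I)$.

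The main obstacle I anticipate is this reverse direction — specifically the word-combinatorial observation that $\lm_\prec(s)$ is forced to be a standard monomial and the bookkeeping needed to conclude that $\lm_\prec$ is preserved when commuting $s$ into $\Pn$. Both rest on the strict decrease of leading monomials under each commutation reduction together with the closure of standard monomials under taking subwords of the free algebra.
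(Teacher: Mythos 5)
Your proof is correct and follows the same route as the paper: first equality via \Cref{807}.\ref{807a}, then reduction to the claim that the irreducible standard monomials are exactly those with exponent outside $\operatorname{L}_\prec(I)$, with both inclusions resting on \Cref{837} and \Cref{808}\eqref{808b}. The paper dispatches both directions with ``clearly'' and ``indeed''; your write-up supplies the details it leaves implicit (in particular the subword argument showing $\lm_\prec(s)$ is standard and the preservation of the leading monomial under commutation reductions), and these details are sound.
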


\begin{proof}
 The first equality is due to \Cref{807}.\ref{807a}. Since the set of irreducible standard monomials with respect to $(\ideal{I\cup S},\prec)$ forms a $\K$-basis of $\Tn^{\irr}_{\ideal{I\cup S},\prec}$ by \Cref{807}.\ref{807b}, it suffices to show that this set agrees with $ \{\ul x^\alpha\mid \alpha\notin \operatorname{L}_\prec(I)\}$. Clearly, the former set is contained in the latter. On the other hand, $\ul x^\alpha$ with $\alpha\notin \operatorname{L}_\prec(I)=\operatorname{l}_\prec(\ideal{S\cup I}\cap \Pn)$ is indeed irreducible.
\end{proof}


\begin{prp}\label{1000}
PBW-algebras are precisely the PBW-reduction-algebras with PBW-reduction datum of type $(\Tn,S,\{0\},\prec)$. In particular, polynomial rings and Weyl algebras are PBW-reduction-algebras.
\end{prp}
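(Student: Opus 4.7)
The strategy is straightforward: specialize condition \eqref{808b} to $I=\{0\}$ and apply \Cref{n1}.

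First, I would unpack what a datum of type $(\Tn,S,\{0\},\prec)$ requires. Since $\{0\}\setminus\{0\}=\emptyset$, the set $\operatorname{L}_\prec(\{0\})$ is empty, so by \Cref{837} condition \eqref{808b} reduces to $\operatorname{l}_\prec(\ideal{S}\cap\Pn)=\emptyset$, equivalently $\ideal{S}\cap\Pn=0$. Thus a datum of this type is precisely a commutation system $(S,\prec)$ with $\prec$ a well-ordering whose two-sided ideal meets $\Pn$ only in zero.

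Now \Cref{n1} gives $\Tn/\ideal{S}=\bigoplus_{\alpha\in\NN^n}\K\ul x^\alpha$ as $\K$-vector spaces, i.e.\ the standard monomials descend to a $\K$-basis of the quotient. Combined with the prescribed form of the defining relations in a commutation system ($x_jx_i-c_{ij}x_ix_j-d_{ij}$ with $\lm_\prec(d_{ij})\prec x_ix_j$, $c_{ij}\in\K^*$, $d_{ij}\in\Pn$), this is exactly the classical definition of a PBW-algebra. Conversely, any PBW-algebra is by definition presented by such an $S$ together with an adapted well-ordering $\prec$, and the PBW-basis property immediately forces $\ideal{S}\cap\Pn=0$, recovering \eqref{808b} for $I=\{0\}$.

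For the \emph{in particular} statement I would write down the standard commutation systems explicitly. Polynomial rings correspond to $S=\{x_jx_i-x_ix_j\mid 1\le i<j\le n\}$ (so $c_{ij}=1$ and $d_{ij}=0$), and Weyl algebras to the analogous system on variables $x_1,\dots,x_n,\partial_1,\dots,\partial_n$ with the extra relations $\partial_ix_j-x_j\partial_i-\delta_{ij}$, where $d_{ij}=\delta_{ij}\in\K$ trivially has $\prec$-leading monomial below $x_ix_j$ in any degree-based well-ordering. Both fit in the well-ordering of \Cref{804}.\ref{804a}, so the commutation-system conditions are satisfied and $\ideal{S}\cap\Pn=0$ holds by the usual PBW theorem.

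I do not expect a serious obstacle: the only subtlety is matching the commutation-system formalism of \Cref{805}.\ref{805e} with the ambient literature definition of a PBW-algebra, which amounts to a bookkeeping check that each defining relation $x_jx_i-c_{ij}x_ix_j-d_{ij}$ has $d_{ij}$ of $\prec$-leading monomial strictly below $x_ix_j$ for a suitable well-ordering.
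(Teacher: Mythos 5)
Your proposal is correct and follows essentially the same route as the paper: the forward direction (a PBW-algebra satisfies \eqref{808b} with $I=\{0\}$) comes from the defining PBW-basis property forcing $\ideal{S}\cap\Pn=\{0\}$, and the converse is exactly \Cref{n1}. The explicit commutation systems for polynomial rings and Weyl algebras are a harmless elaboration of the ``in particular'' clause, which the paper leaves implicit.
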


\begin{proof}
Given a PBW-algebra with commutation system $(S,\prec)$ with respect to the well-ordering $\prec$. Then $\ideal{S}\cap \Pn=\{0\}$ by definition and hence Equation \eqref{808b} is satisfied with $I=\{0\}$. The converse is due to \Cref{n1}. 
\end{proof}


\begin{lem}\label{888} 
 Consider the commutation system $(S,\prec)$, where $\prec$ is a well-ordering, the finite set $I\subseteq \Pn$ and the two-sided ideal $R\subseteq \Tn$ containing $S$ and $I$ and satisfying $\operatorname{L}_\prec(I)=\operatorname{l}_\prec(R\cap \Pn)$.
 For $p\in \Tn$ one can compute $a\in \Tn^I$ and a finite set $U\subseteq \Tn\times S\times \Tn$ such that 
 \[
 p=\rho_{R,\prec}(p)+\sum_{g\in I} a_g g+\sum_{(t,s,t')\in U} tst', 
 \]
$ \lE(a_g)+\lE(g)\preceq \lE(p)$ with equality for some $g\in I$ and $\lE(t)+\lE(s)+\lE(t')\preceq\lE(p)$.
In particular, $
 R=\ideal{S\cup I}$ and $(\Tn,S,I,\prec)$ is a PBW-reduction datum.
\end{lem}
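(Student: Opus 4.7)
The plan is to proceed by Noetherian induction on $\lm_\prec(p)$, which terminates because $\prec$ is a well-ordering; the base case $p=0$ is trivial with $a=0$ and $U=\emptyset$. For the inductive step, I first strip off the non-commutative behaviour by invoking \Cref{807}.\ref{807c}, obtaining $q:=\rho_{S,\prec}(p)\in\Pn$ and a finite set $U_1\subseteq\Tn\times S\times\Tn$ with $p=q+\sum_{(t,s,t')\in U_1}tst'$, $\lm_\prec(q)=\lm_\prec(p)$ if $q\neq 0$, and $\lE(t)+\lE(s)+\lE(t')\preceq\lE(p)$ for all triples. Since $p-q\in\ideal{S}\subseteq R$, the identity $\rho_{R,\prec}(p)=\rho_{R,\prec}(q)$ reduces the task to producing the representation for $q$.

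Assuming $q\neq 0$, I split on $\alpha:=\lE(q)=\lE(p)$. If $\alpha\in\operatorname{L}_\prec(I)$ (Case A), I pick $g\in I$ and $\beta\in\NN^n$ with $\alpha=\beta+\lE(g)$ and set $c:=\lc_\prec(q)/\lc_\prec(g)$. Then $q':=q-c\ul x^\beta g\in\Pn$ has $\lm_\prec(q')\prec\lm_\prec(q)$, so the induction hypothesis yields a representation for $q'$; because $c\ul x^\beta g\in R$, the irreducible part is unchanged, and adding $c\ul x^\beta g$ back to the $g$-component of $a$ produces the desired representation of $q$, with the summand $c\ul x^\beta g$ realising the equality $\lE(a_g)+\lE(g)=\lE(p)$. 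If $\alpha\notin\operatorname{L}_\prec(I)$ (Case B), I instead recurse on $\tail_\prec(q)$ (which lies in $\Pn$ with strictly smaller leading monomial) and reassemble using $\rho_{R,\prec}(q)=\lt_\prec(q)+\rho_{R,\prec}(\tail_\prec(q))$, obtained from $\K$-linearity of $\rho_{R,\prec}$.

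I expect the main obstacle to be verifying, in Case B, that $\ul x^\alpha$ is irreducible with respect to $(R,\prec)$, so that $\rho_{R,\prec}(\lt_\prec(q))=\lt_\prec(q)$ and the reassembly above is valid. The argument I would give runs as follows: any reduction map $\rho_{m,r,m'}$ with $r\in R$ acting non-trivially on $\ul x^\alpha$ requires $m\cdot\lm_\prec(r)\cdot m'=\ul x^\alpha$ as a word in $\Tn$; the right-hand side being a standard monomial forces $m$, $m'$ and $\lm_\prec(r)=\ul x^a$ to all be standard monomials with $a$ componentwise $\leq\alpha$. But then $\rho_{S,\prec}(r)\in R\cap\Pn$ has leading exponent $a$, so $a\in\operatorname{l}_\prec(R\cap\Pn)=\operatorname{L}_\prec(I)$ by hypothesis, whence $\alpha\in\operatorname{L}_\prec(I)$, contradicting the case assumption. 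The final assertions are then immediate: for $p\in R$ we have $\rho_{R,\prec}(p)=0$, so the representation exhibits $p\in\ideal{S\cup I}$, giving $R\subseteq\ideal{S\cup I}$ while the reverse inclusion is built into the hypotheses; condition \eqref{808b} reads $\operatorname{L}_\prec(I)=\operatorname{l}_\prec(\ideal{S\cup I}\cap\Pn)=\operatorname{l}_\prec(R\cap\Pn)$, which is the standing assumption in view of \Cref{837}.
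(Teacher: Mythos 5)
Your overall strategy coincides with the paper's: first apply \Cref{807}.\ref{807c} to replace $p$ by $\rho_{S,\prec}(p)\in\Pn$ while collecting the triples $tst'$, then induct along the well-ordering $\prec$, splitting according to whether the leading exponent lies in $\operatorname{L}_\prec(I)=\operatorname{l}_\prec(R\cap\Pn)$. Your Case B, including the explicit verification that $\ul x^\alpha$ is irreducible with respect to $(R,\prec)$ when $\alpha\notin\operatorname{l}_\prec(R\cap\Pn)$, is correct and in fact supplies detail that the paper's proof leaves implicit.

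Case A, however, contains a genuine misstep. You claim that $q':=q-c\,\ul x^\beta g$ lies in $\Pn$ and satisfies $\lm_\prec(q')\prec\lm_\prec(q)$, with $c=\lc_\prec(q)/\lc_\prec(g)$. Neither assertion holds in the free algebra: $\ul x^\beta g$ is a $\K$-linear combination of concatenated words $\ul x^\beta\ul x^\gamma=x_1^{\beta_1}\cdots x_n^{\beta_n}x_1^{\gamma_1}\cdots x_n^{\gamma_n}$, which are in general \emph{not} standard monomials (take $n=2$, $\beta=(0,1)$, $g=x_1$, giving $x_2x_1$). Consequently $\lt_\prec(c\,\ul x^\beta g)=c\,\lc_\prec(g)\,\ul x^\beta\lm_\prec(g)$ is a different basis element of $\Tn$ from $\lt_\prec(q)=\lc_\prec(q)\,\ul x^\alpha$; no cancellation occurs, and under the refinement of \Cref{804}.\ref{804b} by the ordering of \Cref{804}.\ref{804a} the leading monomial of $q'$ is the non-standard word $\ul x^\beta\lm_\prec(g)\succeq\ul x^\alpha$, so the induction does not descend. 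The repair is exactly the device you already used in the first step: apply \Cref{807}.\ref{807c} to $\ul x^\beta g$ as well, obtaining $\rho_{S,\prec}(\ul x^\beta g)\in\Pn$ with leading monomial $\ul x^{\beta+\lE(g)}=\ul x^\alpha$ (\Cref{837}), subtract $c'\,\rho_{S,\prec}(\ul x^\beta g)$ with $c'=\lc_\prec(q)/\lc_\prec(\rho_{S,\prec}(\ul x^\beta g))$ --- this denominator differs from $\lc_\prec(g)$ by a product of the units $c_{ij}$ from the commutation relations, so your constant is also off --- and absorb the triples produced by this second reduction into $U$, checking that they still satisfy $\lE(t)+\lE(s)+\lE(t')\preceq\lE(p)$. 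This is precisely what the paper's proof does; with that correction your argument goes through.
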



\begin{proof}
 By \Cref{807}.\ref{807c} we can write $p\in \Tn$ as
 \begin{equation}\label{n2}
 p=\rho_{S,\prec} (p)+\sum_{(t,s,t')\in U_p} tst'
 \end{equation}
for some $U_p\subseteq \Tn\times S\times \Tn$ with $\lE(t)+\lE(s)+\lE(t')\preceq\lE(p)$. If $\lE(\rho_{S,\prec} (p))\notin \operatorname{l}_\prec(R\cap \Pn)$, then $\lm(\rho_{S,\prec} (p))\in \Tn^{\irr}_{R,\prec}$ and we continue with $\tail (\rho_{S,\prec} (p))$.
Otherwise pick $g\in I$ such that $\lE(p)=\lE(\rho_{S,\prec} (p))=\lE(g)+\alpha$. \Cref{807}.\ref{807c} also yields an Equation \eqref{n2} with $p$ replaced by $\ul x^\alpha g$.
Hence we obtain 
 \[
 p=c \ul x^\alpha g+ (\tail(\rho_{S,\prec} (p))-\tail(c\rho_{S,\prec} (\ul x^\alpha g)))+\sum_{(t,s,t')\in U_p} tst'-c\sum_{(t,s,t')\in U_{\ul x^\alpha g}} tst'
 \]
for $c=\lc(\rho_{S,\prec} (p))/\lc(\rho_{S,\prec}(\ul x^\alpha g))$. Induction on the well-ordering $\prec$ finishes the proof.
\end{proof}


\begin{prp}\label{841} 
For a commutation system $(S,\prec)$ with well-ordering $\prec$ and a two-sided ideal $S\subseteq R\subseteq \Tn$ exists a finite set $I\subseteq \Pn$ such that $\Tn/R=(\Tn,S,I,\prec)$ is a PBW-reduction-algebra. 
\end{prp}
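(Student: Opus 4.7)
The strategy is to produce a finite $I \subseteq R \cap \Pn$ satisfying the hypothesis of \Cref{888}, which will then deliver both $R = \ideal{S \cup I}$ and the PBW-reduction-datum property of $(\Tn,S,I,\prec)$.

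The first step is to show that $\operatorname{l}_\prec(R \cap \Pn) = \operatorname{L}_\prec(R \cap \Pn)$, that is, the set of leading exponents of $R \cap \Pn$ is stable under $\NN^n$-translation. The inclusion $\subseteq$ is trivial. For the reverse, take $r \in R \cap \Pn$ and $\alpha \in \NN^n$. Then $\ul x^\alpha r \in R$, because $R$ is a two-sided, hence left, ideal; and since $\ul x^\alpha r - \rho_{S,\prec}(\ul x^\alpha r) \in \ideal{S} \subseteq R$, we get $\rho_{S,\prec}(\ul x^\alpha r) \in R \cap \Pn$. This is precisely the argument already sketched in \Cref{837}, and the same reasoning shows $\lE_\prec(\rho_{S,\prec}(\ul x^\alpha r)) = \alpha + \lE_\prec(r)$, establishing the desired equality.

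Next, I would invoke Dickson's lemma on the $\NN^n$-stable subset $\operatorname{l}_\prec(R \cap \Pn) \subseteq \NN^n$ to obtain finitely many elements $\alpha_1,\dots,\alpha_k \in \operatorname{l}_\prec(R \cap \Pn)$ such that $\operatorname{l}_\prec(R \cap \Pn) = \bigcup_{i=1}^k (\alpha_i + \NN^n)$. For each $i$ choose $p_i \in R \cap \Pn$ with $\lE_\prec(p_i) = \alpha_i$ and set $I := \{p_1,\dots,p_k\} \subseteq \Pn$. By construction $\operatorname{L}_\prec(I) = \bigcup_i (\alpha_i + \NN^n) = \operatorname{l}_\prec(R \cap \Pn)$, so all hypotheses of \Cref{888} are satisfied with the given $R$ (which contains both $S$ and $I$), and the desired conclusion follows.

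The only genuinely non-formal ingredient is the first step, which crucially relies on the two-sided (hence left) ideal structure of $R$ together with the fact that commutation reductions only affect strictly lower-order terms; the remainder is Dickson's lemma combined with a direct invocation of \Cref{888}.
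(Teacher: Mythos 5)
Your proposal is correct and follows essentially the same route as the paper: apply Dickson's lemma to the ($\NN^n$-stable) set of leading exponents of $R\cap\Pn$, pick representatives in $R\cap\Pn$ for a finite generating set of exponents, and conclude via \Cref{888}. The only difference is that you make explicit the identification $\operatorname{l}_\prec(R\cap\Pn)=\operatorname{L}_\prec(R\cap\Pn)$ (the argument of \Cref{837}), which the paper's proof uses implicitly when choosing the $r_{\alpha'}$.
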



\begin{proof}
Consider the set 
 \[
 L:=\operatorname{L}_\prec (R\cap \KK[\ul x])\subseteq \NN^n.
 \]
By Dickson's Lemma there is a finite subset $L'\subseteq L$ such that for every $\alpha\in L$ exists an $\alpha'\in L'$ with $\alpha\in \NN^n+\alpha'$. Choose for every $\alpha'\in L'$ an $r_{\alpha'}\in {R}\cap \Pn$ having leading exponent $\alpha'$. Setting
\[
 I:=\{r_{\alpha'}\mid \alpha'\in L'\},
\]
 $\Tn/R=(\Tn,S,I,\prec)$ is a PBW-reduction-algebra by \Cref{888}.
\end{proof}


In general it is unclear how to obtain the set $I$ of the PBW-reduction datum.
In the following special case this is possible.


\begin{lem}\label{812} 
Consider the $\K$-algebra $\KK\langle \ul x,\ul y\rangle:=\KK\langle x_1, \allowbreak \dots,\allowbreak x_n,\allowbreak y_1,\dots,y_m\rangle$, an ideal $I\subseteq \KK[\ul x]$ and a commutation system $(S,\prec)$ such that
\[
S=\{[x_j,x_i]\mid 1\leq i<j \leq n\}\cup\{[y_l,y_k]-d_{kl}\mid 1\leq k<l\leq m\}
\cup \{[y_k,x_i]-f_{ik}\mid 1\leq i\leq n, 1\leq k\leq m\}, 
\]
where $d_{kl},f_{ik}\in \Pn$.
Suppose the surjective $\KK$-linear homomorphism
\[
 \psi: \bigoplus_{\beta\in \NN^m} (\KK[\ul x]/J)\ul y^\beta \to \K\langle \ul x,\ul y\rangle/\ideal{J\cup S},\; \ol{\ul x^\alpha}\ul y^\beta\mapsto \ol{\ul x^\alpha\ul y^\beta} 
\]
is injective and that $I'$ is a Gröbner basis of $I\subseteq \KK[\ul x]$ with respect to the ordering induced by $\prec$. Then $(\KK\langle \ul x,\ul y\rangle,S,I',\prec)$ is a PBW-reduction datum.
\end{lem}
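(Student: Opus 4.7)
My plan is to verify Condition~\eqref{808b} from \Cref{808}; the remaining requirements on the datum $(\KK\langle \ul x,\ul y\rangle,S,I',\prec)$ (commutation system, well-ordering, finiteness of $I'$) hold by hypothesis or because $I'$ is a Gröbner basis. Since $I'$ generates $I$ in $\KK[\ul x]$, the two-sided ideals $R:=\langle I'\cup S\rangle$ and $\langle I\cup S\rangle$ in $\KK\langle \ul x,\ul y\rangle$ coincide, and by \Cref{837} the inclusion $\subseteq$ in~\eqref{808b} is automatic. I must therefore show, for every nonzero $r\in R\cap \KK[\ul x,\ul y]$, that $\lE_\prec(r)\in \operatorname{L}_\prec(I')$.

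The first step is to use the injectivity of $\psi$ to put $r$ in a normal form. Writing $r=\sum_{\alpha,\beta} r_{\alpha,\beta}\ul x^\alpha\ul y^\beta$ and $p_\beta:=\sum_\alpha r_{\alpha,\beta}\ul x^\alpha\in \KK[\ul x]$, the element $\sum_\beta \overline{p_\beta}\,\ul y^\beta\in \bigoplus_{\beta\in \NN^m}(\KK[\ul x]/I)\ul y^\beta$ is sent by $\psi$ to $\bar r=0$. Injectivity will force $p_\beta\in I$ for every $\beta$, giving $r=\sum_\beta p_\beta\ul y^\beta$ with each $p_\beta\in I$.

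The second step is to identify $\lE_\prec(r)$ with a single commutative leading exponent. Let $\prec_x$ denote the restriction of $\prec$ to $\SMon(\KK[\ul x])\subseteq \SMon(\KK\langle \ul x,\ul y\rangle)$. Property~(a') of \Cref{803}.\ref{803d}, applied with translations of the form $(0,\beta)$, shows that $\prec_x$ is a monomial well-ordering on $\KK[\ul x]$ and that $\lE_\prec(p_\beta\ul y^\beta)=(\lE_{\prec_x}(p_\beta),\beta)$ whenever $p_\beta\neq 0$. Since distinct values of $\beta$ produce distinct $\ul y$-parts, no cancellation occurs between the summands $p_\beta\ul y^\beta$, so
\[
 \lE_\prec(r)=(\lE_{\prec_x}(p_{\beta^*}),\beta^*)
\]
for some $\beta^*$ at which the right-hand side is $\prec$-maximal.

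Finally, as $I'$ is a Gröbner basis of $I$ with respect to $\prec_x$ and $p_{\beta^*}\in I$, I can write $\lE_{\prec_x}(p_{\beta^*})=\gamma+\lE_{\prec_x}(g)$ for some $\gamma\in \NN^n$ and $g\in I'$. Since $g\in \KK[\ul x]$, the leading exponent of $g$ viewed in $\KK\langle \ul x,\ul y\rangle$ is $\lE_\prec(g)=(\lE_{\prec_x}(g),0)$, hence
\[
 \lE_\prec(r)=(\gamma,\beta^*)+\lE_\prec(g)\in \operatorname{L}_\prec(I'),
\]
which completes the verification. The main technical hurdle will be the leading-term analysis in the second and third steps: translation invariance of $\prec$ is what lets me pass from the noncommutative leading exponent of $r\in \KK\langle \ul x,\ul y\rangle$ to the commutative leading exponent of a single $p_\beta\in \KK[\ul x]$, and only then does the Gröbner basis property of $I'$ become applicable.
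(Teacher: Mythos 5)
Your proposal is correct and follows essentially the same route as the paper's proof: decompose $r=\sum_\beta p_\beta\ul y^\beta$, use the injectivity of $\psi$ to conclude $p_\beta\in I$ for all $\beta$, and then apply the Gröbner basis property of $I'$ to the $p_{\beta}$ carrying the leading exponent. You merely make explicit two points the paper leaves implicit (that $\ideal{I'\cup S}=\ideal{I\cup S}$ and the translation-invariance argument identifying $\lE_\prec(r)$ with $(\lE_{\prec}(p_{\beta'}),\beta')$), which is fine.
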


\begin{proof}\ 
Let $ 0\neq p=\sum_{\beta\in \NN^m}p_{\beta} \ul y^\beta\in \ideal{I\cup S}\cap \KK[ \ul x,\ul y]$ with $p_{\beta}\in \Pn$. Then $\ol 0=\ol p \in \K\langle \ul x,\ul y\rangle/\ideal{I\cup S}$, hence $\sum_{\beta\in \NN^m}\ol{p_{\beta}} \ul y^\beta=\psi^{-1}(\ol p)=0$ by hypothesis and $p_\beta\in I$ for all $\beta\in \NN^m$.
Since $I'$ is a Gröbner basis of $I\subseteq \KK[\ul x]$ with respect to the ordering induced by $\prec$, it follows that
\[
(\alpha',\beta'):=\lE_{\prec}(p)=(\lE_{\prec}(p_{\beta'}),\beta')\in L_\prec(I').\qedhere
\]
\end{proof}

\begin{rem}\label{n4}
For $S$ as in \Cref{812}, any special well-ordering $\prec$ as in \Cref{805}\eqref{805e} that satisfies
 \[
 \ul x^\alpha \ul y^\beta \prec\ul x^{\alpha'} \ul y^{\beta'}\text{ if } |\beta|<|\beta'|
\]
(with $\alpha,\alpha'\in \NN^n$ and $\beta,\beta'\in \NN^m$)
 makes $(S,\prec)$ a commutation system.
\end{rem}


\begin{dfn}\label{923} In the situation of \Cref{812}
 we call $A=(\KK\langle \ul x,\ul y\rangle,S, I',\prec)$ an \emph{elementary PBW-reduction datum / algebra}.
\end{dfn}


Generalizing \Cref{1000}, the following example describes differential operators on smooth, complex affine varieties as PBW-reduction-algebras. 


\begin{exa}\label{577} 
Let $X$ be a smooth irreducible complex affine variety of dimension $m$ defined by the prime ideal $I\subseteq \CC[\ul x]$. Its tangent sheaf $\Theta_X$ is a locally free $\mO_X$-module. Note that every element of $\Theta_X(X)= \operatorname{Der}_{\CC}(\CC[\ul x]/I)$ is of the form $\ol{\theta}$ for some $\theta\in \Theta_{\CC^n}(\CC^n)=\operatorname{Der}_{\CC}(\CC[\ul x])$ with $\theta(I)\subseteq I$. After shrinking $X$ if necessary there is
an $\mO_X$-basis $\ol{\theta_1},\dots,\ol{\theta_m}\in \Theta_X(X)=\operatorname{Der}_{\CC}(\CC[\ul x]/I)$ of $\Theta_X$ and there are
regular functions $\ol{f_1},\dots, \ol{f_m}\in \CC[\ul x]/I$ 
satisfying
$
[\ol{\theta_i},\ol{\theta_j}]=0$ and $[\ol{\theta_i},\ol{f_j}]=\delta_{ij}$ for $1\leq i,j\leq m$.
\begin{enumerate}

 \item \label{577a} The global sections of the sheaf of differential operators $\mD_X$ form an elementary PBW-reduction-algebra: 
We have a $\CC$-linear isomorphism (see \cite[Lemma 1.2.7]{Rottner})
\begin{align*}
\phi:\bigoplus_{\beta\in \NN^m} (\CC[\ul x]/I) \ul y^\beta \to &\mD_X(X)=\CC\langle \ol{x_1},\dots, \ol{x_n},\ol{\theta_1},\dots,\ol{\theta_m}\rangle \subseteq \End_{\CC}(\CC[\ul x]/I),\\
 \ol{\ul x^\alpha}\ul y^\beta \mapsto &\ol{x_1}^{\alpha_1}\cdots \ol{x_n}^{\alpha_n}\ol{\theta_1}^{\beta_1}\cdots \ol{\theta_m}^{\beta_m} 
\end{align*}
and the generators of the $\CC$-algebra $\mD_X(X)$ satisfy $[\ol{x_j},\ol{x_i}]=0$, $[\ol{\theta_p},\ol{\theta_k}]=0$ and $[\ol{\theta_k},\ol{x_i}]={\ol{{\theta}_k(x_i)}}$ for $1\leq i\leq j\leq n$ and $1\leq k\leq p\leq m$.
Consequently, $\psi$ factors through the algebra
\[ 
T_X:= \CC\langle \ul x, \ul y\rangle / \ideal{S\cup I},
\]
where 
\begin{align*}
 S:=&\{[x_j,x_i]\mid 1\leq i<j\leq n\}\cup \{[y_p,y_k]\mid 1\leq k<p\leq m\}\\
 \cup &\{ [y_k,x_i]-{\theta}_k(x_i) \mid 1\leq i\leq n, 1\leq k\leq m\}.
\end{align*}
This leads to isomorphisms
\begin{equation}\label{n3}
 \xymatrixrowsep{1pc}\xymatrix{
 \phi:\bigoplus_{\beta\in \NN^m} (\CC[\ul x]/I)\ul y^\beta \ar[r]^-{\psi}_-{\cong} &T_X\ar[r]_-{\cong} &\mD_X(X)\\
 \ol{\ul x^\alpha}\ul y^\beta \ar@{|->}[r]& \ol{\ul x^\alpha\ul y^\beta} \ar@{|->}[r]& \ol{x_1}^{\alpha_1}\cdots \ol{x_n}^{\alpha_n}\theta_1^{\beta_1}\cdots \theta_m^{\beta_m}.
 }
 \end{equation}
 identifying $\mD_X(X)$ with the elementary PBW-reduction-algebra $T_X$.

 \item \label{577c} 
By identifying $X$ with the closed subvariety $V(I,t-f_m)\subseteq \CC^n\times \CC_t$,
 we may assume $f_m$ agrees with $x_n$, and that $\theta_i(x_n)=\delta_{i,m}$. 
Consider the obvious algebra homomorphism $\CC\langle \ul x, y_1,\dots,y_{m-1},z\rangle \to \mD_X(X)$ sending $z$ to $\ol{x_n}\ol{\theta_m}$ with image $V$. It factors through the PBW-reduction-algebra $T_X^V=\CC\langle \ul x, y_1,\dots,y_{m-1},z\rangle /\ideal{S_V\cup I}$ where
\begin{align*}
S_V:=\{&[x_j,x_i], [y_l,y_k],[z,y_k],[y_k,x_i]-{\theta}^l_k(x_i), [z,x_i]-x_n{\theta}^l_m(x_i)\mid \\
& 1\leq i\leq j\leq n, 1\leq k\leq l\leq m-1 \}\setminus \{0\}.
\end{align*}
This extends \Cref{n3} to a commutative diagram of $\CC$-linear maps
\[
 \xymatrixrowsep{1pc}\xymatrix{
 \bigoplus_{\beta\in \NN^m} (\CC[\ul x]/I)\ul y^\beta \ar[r]^-{\psi}_-{\cong} &T_X\ar[r]_-{\cong} &\mD_X(X)\\
 \bigoplus_{\beta\in \NN^{m-1},\gamma\in \NN} (\CC[\ul x]/I)y_1^{\beta_1}\cdots y_{m-1}^{\beta_{m-1}} z^\gamma\ar[u] \ar@{->>}[r]\ar[u]& T_X^V\ar@{->>}[r]\ar[u]& V\ar[u].
 }
 \]
 where the right hand square consists of $\CC$-algebra homomorphisms. One can show that the left vertical map is injective. It follows that the bottom maps are isomorphisms and the vertical maps are injections. So we may identify $V$ with the elementary PBW-reduction-algebra $T_X^V$.

\item\label{577d} Let $\phi_{x_n}:\CC\langle \ul x,y_1,\dots,y_{m-1},z \rangle\to \CC\langle x_1,\dots,x_{n-1},y_1,\dots,y_{m-1},z \rangle $ be the $\CC$-algebra homomorphism that maps $x_n$ to $0$ and acts on all other variables as identity. In the situation of Part~\ref{577c}, $V/x_nV$
can be realized as the elementary PBW-reduction-algebra 
\[
T_X^{V/x_nV}:=(\CC\langle x_1,\dots, x_{n-1}, y_1,\dots,y_{m-1},z\rangle ,S_{V/x_nV},I_{V/x_nV},\prec^{V/x_nV})
\]
defined as follows: 
 For
\begin{align*}
S_{V/x_nV}:=\{&[x_j,x_i], [y_p,y_k],[z,y_k],[z,x_i], [y_k,x_i]-\phi_{x_n}({\theta}_k^l(x_i)) \mid \\
& 1\leq i\leq j\leq n-1, 1\leq k\leq p\leq m-1 \}\setminus \{0\},
\end{align*}
pick a well-ordering $\prec^{V/x_nV}$ as in \Cref{n4}. Now let 
 $I_{V/x_nV}\subseteq \CC[x_1,\dots,x_{m-1}]$ be a Gröbner basis of $\phi_{x_n}(I)$ with respect to the ordering induced by $\prec^{V/x_nV}$. Note that the canonical projection $V\to V/x_nV$ induces the same
 map $ T_X^{V}\to T_X^{V/x_nV}$ as $\phi_{x_n}$.

\item\label{577e}
With the assumption of Part~\ref{577c} assume that the subvariety $X_0:=V(x_n)\cap X\subseteq X$ is smooth. Then $(\ol{f_i}, \overline{\theta_i})_{1\leq i\leq m-1}$ is a global coordinate system on $X_0$.
By Part~\ref{577a} $\mD_{X_0}(X_0)$ identifies with the PBW-reduction-algebra $T_{X_0}$, whose commutation system is obtained by deleting all equations involving $y_m$ from $S$. The natural isomorphism $ V/x_nV\cong \mD_{X_0}(X_0)[\ol{x_n}\ol{\theta_m}]$ then identifies with the isomorphism $T_X^{V/x_nV}\cong T_{X_0}[z]$ that is induced by the identity of $\CC\langle x_1,\dots,x_{n-1},y_1,\dots,y_{m-1},z\rangle$.

\end{enumerate}
\end{exa}


\begin{rem}\label{935} 
There is an algorithmic approach to the sheaf of differential operators on smooth affine varieties by \cite{OakuAffine}.
Consider the setup of \Cref{577}.
Oaku suggests two methods: The first one is based on the statement that the $\CC$-subalgebra of the Weyl-algebra generated by $x_1,\dots,x_n$ and $\theta_1,\dots,\theta_m$ equals $\bigoplus_{\alpha\in \NN^n,\beta\in \NN^m} \CC\ul x^\alpha\theta_1^{\beta_1}\cdots \theta_m^{\beta_m}$. This does not hold in general: Indeed, there is always a local coordinate system such that $f_i=x_i$ and $\theta_i=\partial_i+\sum_{m+1\leq k\leq n} a_k^i(\ul x) \partial_k$ with $a_k^i(\ul x)\in \CC[\ul x]$. In this case $[\theta_j,\theta_i]\in \sum_{m+1\leq k\leq n} \CC[\ul x]\partial_k$ can only be contained in the above direct sum if it is $0$. However, $\theta_1,\dots,\theta_m$ do not commute in general.

Oaku's second method uses the Leibnitz rule to define a non-associative ``multiplication''. 
The resulting algorithm is essentially equivalent to \Cref{829}. However, Oaku's proof of correctness relies again on the above false statement.
\end{rem}


\begin{prp}\label{813}
PBW-reduction-algebras are left and right Noetherian rings.
\end{prp}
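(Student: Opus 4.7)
Let $A = (\Tn, S, I, \prec)$ be a PBW-reduction-algebra. The plan is to show by a Gröbner-style monoideal argument that every left ideal of $A$ is finitely generated; the right-ideal case will follow by a parallel argument. Using the unique normal forms provided by \Cref{n1}, every $0 \neq a \in A$ can be written as a $\K$-linear combination of standard monomials $\ul x^\beta$ with $\beta \notin \operatorname{L}_\prec(I)$, which yields a well-defined leading exponent $\lE(a) \in \NN^n \setminus \operatorname{L}_\prec(I)$ and leading coefficient $\lc(a) \in \K^*$. For a left ideal $L \subseteq A$ I set $\LE(L) := \{\lE(a) \mid 0 \neq a \in L\}$.

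The key technical ingredient will be a multiplicative property: for all $0 \neq a \in A$ and $\alpha \in \NN^n$,
\[
\lE(\ul x^\alpha \cdot a) \preceq \alpha + \lE(a),
\]
with equality if and only if $\alpha + \lE(a) \notin \operatorname{L}_\prec(I)$. I would prove this by lifting $a$ to its normal form $\rho(a) \in \Pn$, observing that the non-commutative free-algebra product $p := \ul x^\alpha \rho(a) \in \Tn$ satisfies $\lE(p) = \alpha + \lE(a)$, and invoking \Cref{888} with $R = \ideal{I\cup S}$: this expresses $p$ as a sum of $\rho_{R,\prec}(p) = \ul x^\alpha \cdot a$, commutation-term contributions, and $I$-generator contributions, all with leading exponent $\preceq \lE(p)$. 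Commutation-reduction tails are by definition strictly smaller in $\prec$, so the leading term of $p$ can only be killed by an $I$-reduction, which fires exactly when $\alpha + \lE(a) \in \operatorname{L}_\prec(I)$. The symmetric property for right multiplication $a \cdot \ul x^\alpha$ holds by the mirror argument.

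Granted the multiplicative property, the set $M := \LE(L) \cup \operatorname{L}_\prec(I) \subseteq \NN^n$ is a monoideal: for $\beta \in \LE(L)$ witnessed by $a \in L$ and any $\alpha \in \NN^n$, either $\alpha + \beta \in \operatorname{L}_\prec(I) \subseteq M$, or else $\ul x^\alpha a$ witnesses $\alpha + \beta \in \LE(L) \subseteq M$; the $\operatorname{L}_\prec(I)$-part of $M$ is trivially stable under $\NN^n$. By Dickson's lemma, $M$ is finitely generated as a monoideal, and since $\operatorname{L}_\prec(I)$ is already finitely generated by the finite set $\{\lE(g) \mid g \in I\}$, I can choose finitely many $a_1, \ldots, a_r \in L$ so that every $\beta \in \LE(L)$ has the form $\beta = \alpha + \lE(a_i)$ for some $i$ and some $\alpha \in \NN^n$ (here $\LE(L) \cap \operatorname{L}_\prec(I) = \emptyset$ by construction, so generators coming from $\operatorname{L}_\prec(I)$ cannot contribute).

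To conclude $L = \sum_{i=1}^r A a_i$, I would argue by contradiction: if not, the well-ordering $\prec$ lets me pick $a \in L \setminus \sum_i A a_i$ with $\lE(a)$ minimal. Writing $\lE(a) = \alpha + \lE(a_i)$ and applying the equality case of the multiplicative property gives $\lE(\ul x^\alpha a_i) = \lE(a)$, so for a suitable $c \in \K^*$ the element $a - c \ul x^\alpha a_i \in L$ has strictly smaller leading exponent and therefore lies in $\sum_i A a_i$ by minimality, forcing $a$ itself into $\sum_i A a_i$---a contradiction. The right Noetherian statement follows by replacing $\ul x^\alpha \cdot a$ by $a \cdot \ul x^\alpha$ throughout. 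The main obstacle in this plan is the multiplicative property: one has to verify that commutation reductions and $I$-reductions cannot conspire to annihilate the leading term when $\alpha + \lE(a) \notin \operatorname{L}_\prec(I)$, and \Cref{888} is precisely the tool designed to control this interplay.
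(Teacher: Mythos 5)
Your argument is correct, but it follows a genuinely different route from the paper. The paper proves \Cref{813} by equipping $A$ with the multi-filtration $F^\prec_\bullet$ indexed by $\NN^n$, identifying the associated multi-graded algebra $\gr^{F^\prec}\!A$ with a quotient of a PBW-algebra whose Noetherianity is quoted from the literature, and transferring Noetherianity back along the filtration via a cited lemma on multi-filtered rings. You instead give a direct Dickson's-lemma argument: leading exponents of nonzero elements of $A$ are well defined via \Cref{n1}, the set $\LE(L)\cup\operatorname{L}_\prec(I)$ is an $\NN^n$-monoideal thanks to your multiplicative property, and a descending induction along the well-ordering shows that finitely many witnesses generate $L$ — essentially the classical ``Gröbner bases exist, hence Noetherian'' proof, and in substance a one-sided-ideal version of \Cref{841}. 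The one place where your write-up is slightly loose is the justification of the equality case of the multiplicative property: the reduction $\rho_{\ideal{I\cup S},\prec}$ is taken with respect to the whole two-sided ideal, not just the finitely many relations in $S\cup I$, so ``the leading term can only be killed by an $I$-reduction'' should be argued via the defining condition \ref{808}\eqref{808b}: if the leading term of $p=\ul x^\alpha\rho(a)$ dropped under reduction, then $\rho_{S,\prec}(p)-\rho_{\ideal{I\cup S},\prec}(p)$ would be an element of $\ideal{I\cup S}\cap\Pn$ with leading exponent $\alpha+\lE(a)$ (using \Cref{807}.\ref{807c} to see that $\rho_{S,\prec}$ preserves the leading monomial), forcing $\alpha+\lE(a)\in\operatorname{l}_\prec(\ideal{I\cup S}\cap\Pn)=\operatorname{L}_\prec(I)$; this is exactly what \Cref{888} packages. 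Your approach buys a self-contained proof independent of the external results on multi-filtered rings and PBW-algebras, at the cost of re-deriving that machinery by hand; the paper's approach is shorter given those references and also yields the graded description used later.
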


\begin{proof}
Let $A=(\Tn,S,I,\prec)$ be a PBW-reduction-algebra with $S=\{x_jx_i-c_{ij}x_ix_j-d_{ij}\mid 1\leq i<j\leq n\}$. 
This gives rise to a multi-filtration $F^\prec_\bullet$ on $A$ indexed by $\NN^n$ (see \cite{MultiFilt}) given
by
\[
 F^\prec_\alpha A:=\sum_{ \ul x^\beta\preceq \ul x^\alpha} \KK\ol{\ul x^\beta},\quad F^\prec_{\prec\alpha} A:=\bigcup_{\beta\in \NN^n:\ul x^\beta\prec\ul x^\alpha} F^\prec_{\beta} A=\sum_{ \ul x^\beta\prec \ul x^\alpha} \KK\ol{\ul x^\beta}
\] 
for $\alpha\in \NN^n$. Note that this filtration is exhaustive by \ref{n1}. By \cite[Lemma 1.2]{MultiFilt} it suffices to proof the claim for the
 associated multi-graded algebra
\[
 \gr^{F^\prec}\!A:=\bigoplus_{\alpha\in \NN^n} F^\prec_\alpha A/F^\prec_{\prec\alpha} A.
\]
It identifies with a factor algebra of a PBW-algebra by the isomorphism
 \begin{align*}
\varphi: \gr^{F^\prec}\!A\to & \left(\Tn/\ideal{ \{x_jx_i-c_{ij} x_ix_j\mid 1\leq i<j\leq n\}}\right)/\ideal{\{\ol{\lm_\prec(p)}\mid p\in I\}},\\ 
 \gr^{F^\prec}_{e_i} \!A \ni \overline{x_i}\mapsto & \ol{x_i}+\ideal{\{\ol{\lm_\prec(p)}\mid p\in I\}}.
 \end{align*}
The latter is left and right Noetherian (see e.g. \cite[Theorem 4.1]{Noeth}). 
\end{proof}

\subsection{Gröbner bases for PBW-reduction-algebras}\label{288}

\begin{dfn}\label{292}Let $A$ be a PBW-reduction-algebra and $E$ a finite set.
\begin{enumerate}

\item\label{292b}
If $(\Tn,S_e,I_e,\prec_e)$ is a PBW-reduction datum of $A$ for $e\in E$, we call $(\Tn,S_e,I_e,\prec_e)_{e\in E}$ a PBW-reduction datum for $A^E$ and write 
$A^E=(\Tn,S_e,I_e,\prec_e)_{e\in E}$. We say that the monomial (well-)ordering $\prec^E$ on $\SMon(\Tn^E)$ is a \emph{(well-)ordering} on $A^E$ if $\prec^E_e$ is an ordering on $e$th summand $(\Tn,S_e,I_e,\prec_e)$ of $A^E$. 
If $\prec^E_e=\prec_e$ for all $e\in E$, we call $(\Tn,S_e,I_e,\prec_e)_{e\in E}$ a PBW-reduction datum for $(A^E,\prec^E)$ and write $(A^E,\prec^E)=(\Tn,S_e,I_e,\prec_e)_{e\in E}$.

\item\label{292c} Abusing notation, we set for $(A^E,\prec^E)=(\Tn,S_e,I_e,\prec_e)_{e\in E}$ 
\[
\rho_{A^E,\prec^E}:=\bigoplus_{e\in E} \rho_{\ideal{I_e\cup S_e},\prec^E_e}: \Tn^E\to (\Tn^E)^{\irr}_{A^E,\prec^E}:=\bigoplus_{e\in E} \Tn^{\irr}_{\ideal{I_e\cup S_e},\prec^E_e}(e).
\]
We also define the map 
\[
 \tau_{A^E,\prec^E}: A^E\to (\Tn^E)^{\irr}_{A^E,\prec^E}\subseteq \Tn^E
\]
as the inverse of the composed map $(\Tn^E)^{\irr}_{A^E,\prec^E}\hookrightarrow \Tn^E\twoheadrightarrow A^E$ using \Cref{807}.\ref{807c}. We sometimes also use the notation $\rho_{\prec^E}$ and $\tau_{\prec^E}$ for the above maps if that does not cause any ambiguity. 

For $0\neq a\in A$, we define the data introduced in \Cref{803}\eqref{803f} by the corresponding data of $\tau_{(A^E,\prec^E)}(a)$ and adapt the convention for the leading exponents and monomials of $0$ accordingly. 

\end{enumerate}
\end{dfn}


\begin{rem}\label{833}
Let $A=(\Tn,S,I,\prec)$ be a PBW-reduction-algebra and $E$ and $E_1,\dots, E_s$ finite sets. Then we have:
\begin{enumerate}

\item\label{833a} Given a total order $<^E$ on $E$ PBW-reduction data for $(A^E,(\prec,<^E))$ and $(A^E,(<^E,\prec))$ are given by $(\Tn,S,I,\prec)_{e\in E}$. 

\item\label{833b} We identify $A^{E_1\sqcup\dots \sqcup E_s}=A^{E_1}\oplus \dots\oplus A^{E_s}$ extending the notions of \Cref{292} to the latter.
PBW-reduction data on $(A^{E_i},\prec^{E_i})$ define PBW-reduction data on $(A^{E_1}\oplus \dots\oplus A^{E_s}, \prec_{1,\dots,s}^{E_1,\dots,E_s})$.

\end{enumerate}
\end{rem}


\begin{dfn}\label{293} Let $A$ be a PBW-reduction-algebra, $E$ a finite set, $(A^E,\prec^E)=(\Tn,S_e,I_e,\allowbreak \prec_e)_{e\in E}$ and $M\subseteq A^E$ an $A$-submodule.
\begin{enumerate}

\item\label{293a} We call the finite set $G\subseteq M$ a \emph{Gröbner basis} of $M$ (with respect to $\prec^E$) if every $m\in M$ has a so-called \emph{standard representation}, i.e., there exists $a\in A^G$ such that 
 \[
 m=\sum_{g\in G} a_gg \text{ and } \lE_{\prec^E_{\lcomp(g)}}(a_g)+\lec_{\prec^E}(g)\preceq^E \lec_{\prec^E}(m) \text{ for all } g\in G.
 \]

\item\label{293b} If $G$ is a Gröbner basis of $M$, we say that $G$ is reduced if $0\notin G$, $\lc_{\prec^E}(g)=1$ for all $g\in G$, and if we have for all $g\in G$, $e\in E$ and $\alpha\in \NN^n$ 
\[
(\tau_{A^E,\prec^E}(g))_{e,\alpha}\neq 0 \text{ implies } (\alpha,e)\neq\lec_{\prec^E}(g')+\gamma \text{ for all } g\neq g'\in G, \gamma\in \NN^n.
\]

\end{enumerate}
\end{dfn}


\begin{rem}\label{846} 
Let $A$ be a PBW-reduction-algebra, $E$ a finite set, $\prec^E$ an ordering on $A^E=(\Tn,S_e,I_e,\prec_e)_{e\in E}$ and $M\subseteq A^E$ an $A$-submodule. To circumvent the problem that we do in general not have a well-defined notion of leading exponents of elements of $A^E$ with respect to $\prec^E$, we define Gröbner bases in this situation as follows: We say that a finite set $G\subseteq M$ is a \emph{Gröbner basis} of $M$ with respect to $\prec^E$ if there exists $h\in (\Pn^E)^G$ with $\ol{h_g}=g$ for $g\in G$
such that
for every $t\in \Pn^E$ with $\ol{t}\in M$ exists $a\in \Pn^G$ with 
 \[
 \ol{t}=\sum_{g\in G} \ol{a_g}g \text{ and } \lE_{\prec^E_{\lcomp(h_g)}}(a_g)+\lec_{\prec^E}(h_g)\preceq^E \lec_{\prec_E}(t) \text{ for all } g\in G.
 \]
 We say in this case that $\{h_g\mid g\in G\}$ induces a Gröbner basis of $M$ (with respect to $\prec^E$).
 
Notice that since there exists by definition of PBW-reduction-algebras a well-ordering on $A$, every $m\in M$ has a representative in $\Pn^E$. Moreover, this definition is compatible with \Cref{293}\eqref{293a}.
\end{rem}


\begin{dfn}\label{817}
Let $A$ be a PBW-reduction-algebra, $E$ a finite set, $(A^E,\prec^E)=(\Tn,S_e,I_e, \allowbreak\prec_e)_{e\in E}$ and let $a,a'\in A^E$ be nonzero.
\begin{enumerate}

\item\label{817a} Given a finite set $G\subseteq A^E$, we call $r\in A^E$ a \emph{(left) normal form} of $a$ with respect to $G$ if
 \begin{enumerate}
 \item there exists some $h\in A^G$ with 
\[
 a=\sum_{g\in G} h_g g+r
\]
such that $\lE_{\prec^E_{\lcomp(g)}}(h_g)+\lec_{\prec^E}(g)\preceq^E \lec_{\prec^E}(a)$ for all $g\in G$ and

\item $\lec_{\prec^E}(r)\notin L_{\prec^E}(G)$ if $r\neq 0$.
\end{enumerate}
 We call $r
 $ \emph{reduced} if $(\alpha,e)\notin L_{\prec^E}(G)$ given that $(\tau_{(A^E,\prec^E)}(r))_{e,\alpha}\neq 0$. We define the normal form of $0\in A^E$ with respect to $G$ to be $0$.

\item\label{817b} The \emph{$s$-polynomial} of $a$ and $a'$ with $e:=\lcomp(a)=\lcomp(a')$ is defined by
\[
 \spoly(a,a'):=\begin{cases}
 \frac{1}{\lc(\ul x^{c_{a,a'}}a)}\ul x^{c_{a,a'}} a-\frac{1}{\lc(\ul x^{c_{a',a}} a')}\ul x^{c_{a',a}} a' & \text{if } \ul x^{b_{a,a'}}(e)\in (\Tn^E)^{\irr}_{A^E,\prec^E},\\
 0&\text {otherwise,}
 \end{cases}
\]
where $b_{a,a'}
,c_{a,a'}\in \NN^n$ are given by $(b_{a,a'})_i:=\max\{\lE(a)_i,\lE(a')_i\}$ and $(c_{a,a'})_i:=(b_{a,a'})_i-\lE(a)_i$ for $1\leq i\leq n$. 
If $\lcomp(a)\neq \lcomp(a')$, we set $\spoly(a,a'):=0$. 
\item\label{817c} The \emph{$s$-polynomial} of $a$ and $p\in I_e$ is defined by
\[
 \spoly(a,p):=\begin{cases}
 \ul x^{c_{a,p}} a & \text{ if } e=\lcomp(a),\\
 0& \text{ otherwise, }
 \end{cases}
\]
where $c_{a,p}\in \NN^n$ is given by $(c_{a,p})_i:=\max\{\lE(a)_i,\lE(p)_i\} -\lE(a)_i$ for $1\leq i\leq n$.

\end{enumerate}
\end{dfn}


\begin{rem}\label{860} We keep the notation of \Cref{817}.
Assume that $a,a'\in A^E$ satisfy $e:=\lcomp(a)=\lcomp(a')$.
Then 
\[
\lec(\spoly(a,a'))\prec^E (b_{a,a'},e)=\lec(\ul x^{c_{a,a'}} a)=\lec(\ul x^{c_{a',a}} a').
\]
Similarly, we have for $p\in I_e$ 
\[
\lec(\spoly(a,p))\prec^E {c_{a,p}}+\lec( a).
\]
\end{rem}


The following algorithm clearly computes a normal form and terminates, hence showing the existence of normal forms:


\begin{algorithmbis}[Given a PBW-reduction-algebra $A$, a finite set $G\subseteq A^E$, a well-ordering $\prec^E$ on $A^E$ and $a\in A^E$, this algorithm computes a normal form of $a$ with respect to $G$ and $\prec^E$.]
\label[algorithm]{828}
\begin{algorithmic}[1]
\REQUIRE {A PBW-reduction-algebra $A$, a finite set $E$, $(A^E,\prec^E)=(\Tn,S_e,I_e,\prec_e)_{e\in E}$, $G\subseteq A^E$ finite and $a\in A^E$.}
\ENSURE A normal form $b\in A^E$ of $a$ with respect to $G$.
\WHILE {$a\neq 0$ and $\tilde{G}:=\{g\in G\mid \lec_{\prec^E}(a)\in L_{\prec^E}(\{g\})\}\neq \emptyset$}
\STATE Choose $g\in \tilde{G}
$.
\STATE Set $a:=\lc_{\prec^E}(a)\cdot \spoly(a,g)$.
\ENDWHILE
\RETURN $a$.
\end{algorithmic}
\end{algorithmbis}


The above \namecref{828} can be modified to return a reduced normal form using the same method as in the commutative setting (see e.g. \cite[Algorithm 1.6.11]{SingularBook}). 


\begin{rem}\label{818}
Let $A$ be a PBW-reduction-algebra, $E$ a finite set, $\prec^E$ a well-ordering on $A^E$ and $M\subseteq A^E$ an $A$-submodule. If $G$ is a Gröbner basis of $M$, then clearly $m\in A^E$ is an element of $M$ if and only if some / every normal form of $m$ with respect to $G$ is $0$.
\end{rem}


Our algorithm for computing Gröbner bases is based on a variant of the Buchberger criterion for polynomial rings that takes into account the additional relations:


\begin{prp}\label{816}[Buchberger criterion for PBW-re\-duc\-tion-algebras]
 Let $A$ be a PBW-re\-duc\-tion-algebra, E a finite set, $(A^E,\prec^E)=(\Tn,S_e,I_e,\prec_e)_{e\in E}$ and $G\subseteq A^E$ a finite set. Then $G$ is a (left) Gröbner basis (with respect to $\prec^E$) of the $A$-module $\lideal{A}{G}$ if and only if 
 \begin{enumerate}
 \item for all $g,g'\in G$ some / any normal form of $\spoly(g,g')$ with respect to $G$ is $0$ and
 \item for all $g\in G$ and $p\in I_{\lcomp(g)}$ some / any normal form of $\spoly(a,g)$ with respect to $G$ is $0$.
 \end{enumerate}
\end{prp}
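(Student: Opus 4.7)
The plan is to prove both implications. The ``only if'' direction is a formality: every $s$-polynomial $\spoly(g,g')$ and every $\spoly(g,p)=\ul x^{c_{g,p}}g$ lies in $M:=\lideal{A}{G}$, so if $G$ is a Gröbner basis of $M$ then by \Cref{818} every normal form (in particular some, equivalently any) of each with respect to $G$ equals $0$.

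For the ``if'' direction I would adapt the classical Buchberger scheme. Given $m\in M$, consider representations $m=\sum_{g\in G}h_g g$ with $h_g\in A$ and associate the formal degree $\delta:=\max_{\prec^E}\{\lE_{\prec_{\lcomp(g)}}(h_g)+\lec_{\prec^E}(g)\mid g\in G\}\in\NN^n\times E$. Using the well-ordering $\prec^E$, select a representation minimizing $\delta$. The aim is to show $\delta\preceq^E\lec_{\prec^E}(m)$, which is precisely the standard representation condition of \Cref{293}. Assume for contradiction that $\delta\succ^E\lec_{\prec^E}(m)$ and set $T:=\{g\in G\mid \lE(h_g)+\lec(g)=\delta\}$; all $g\in T$ share the same leading component $e$, and the combined contribution of $\sum_{g\in T}\lt(h_g)g$ at the monomial $\ul x^\delta(e)$ must cancel in $\tau_{A^E,\prec^E}(m)$.

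I would then split into two cases. If $\delta\notin L_\prec(I_e)$, then $\ul x^{b_{g,g'}}(e)$ is irreducible for all relevant pairs, so $\spoly(g,g')$ is an honest difference $c\ul x^{c_{g,g'}}g-c'\ul x^{c_{g',g}}g'$; a standard telescoping then expresses $\sum_{g\in T}\lt(h_g)g$ as an $A$-linear combination of monomial multiples of such $\spoly(g,g')$, each of strictly smaller formal degree. If $\delta\in L_\prec(I_e)$, then $\delta=\gamma+\lec(p)$ for some $p\in I_e$, and the cancellation at $\ul x^\delta$ coming from reduction modulo $I_e$ is accounted for by the $s$-polynomials $\spoly(g,p)=\ul x^{c_{g,p}}g$ applied through the controlled rewriting supplied by \Cref{888}. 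In either case, the assumed vanishing of normal forms yields representations $\spoly(\cdot,\cdot)=\sum_{g'\in G}b_{g'}g'$ with $\lE(b_{g'})+\lec(g')$ bounded by the $\lec$ of the respective $s$-polynomial, which by \Cref{860} lies strictly below $\delta$. Substituting and collecting terms produces a representation of $m$ with strictly smaller formal degree than $\delta$, contradicting minimality.

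The main obstacle is the case $\delta\in L_\prec(I_e)$: there a lone summand $\lt(h_g)g$ can silently contribute $0$ to $\ul x^\delta$ in $\tau(m)$ through reduction modulo $I_e$, so the cancellation visible in $\tau(m)$ is not a cancellation among the $\lt(h_g)$ themselves. Tracking this ``hidden'' cancellation is precisely what forces the introduction of the second family of $s$-polynomials, and its correct management requires the controlled expansion of \Cref{888} together with the commutation corrections from $S_e$, all of whose leading exponents are by the defining property of a commutation system strictly smaller and hence admissible for the descent on $\delta$.
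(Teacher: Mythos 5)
Your proposal is correct and follows essentially the same route as the paper's proof: descent on a minimal formal degree $\delta$ of a representation, with the case split on whether $\ul x^\delta(e)$ is irreducible (handled by telescoping into the $\spoly(g,g')$ via \Cref{815} and the commutation corrections of \Cref{856}) or reducible modulo $I_e$ (handled by the $\spoly(g,p)$), each time using the hypothesized vanishing normal forms to contradict minimality. The only cosmetic difference is that the paper packages the controlled rewriting through \Cref{856}.\ref{856b} rather than \Cref{888}, which amounts to the same bookkeeping.
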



For the proof we adapt a standard proof of the commutative Buchberger criterion 
to our setting. It relies on the following \namecref{815}, whose proof from the commutative setting carries over word by word:


\begin{lem}\label{815}
Let $A$ be a PBW-reduction-algebra, E a finite set, $(A^E,\prec^E)=(\Tn,S_e,I_e,\prec_e)_{e\in E}$. Let $G\subseteq A^E\setminus \{0\}$ be a finite set whose elements have the same leading monomial.
Let $m=\sum_{g\in G} a_g g$ with $a\in \KK^G$ be such that $\lm(m)\prec^E \lm(g)$ for $g\in G$.
Then there exists $d \in \KK^{G\times G}$ such that $m=\sum_{(g,g')\in G\times G}
d_{(g,g')}\spoly(g,g')$.
\end{lem}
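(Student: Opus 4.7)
The plan is to transfer the standard commutative cancellation-lemma argument to this setting, which works essentially without change once I verify that the $s$-polynomial formula in \Cref{817}\eqref{817b} collapses to a convenient form when the two inputs share a leading monomial. Let me write the common leading monomial as $\lm_{\prec^E}(g)=\ul x^\alpha(e)$ for all $g\in G$ and set $c_g:=\lc_{\prec^E}(g)\in\KK^*$. First I check that $\ul x^\alpha(e)\in(\Tn^E)^{\irr}_{A^E,\prec^E}$: indeed, for any $g\in G$ we have $\tau_{(A^E,\prec^E)}(g)\in(\Tn^E)^{\irr}_{A^E,\prec^E}$ by \Cref{292}\eqref{292c}, and its leading term $c_g\ul x^\alpha(e)$ appears in its irreducible representative, hence $\ul x^\alpha(e)$ itself is a standard monomial with $(\alpha,e)\notin L_{\prec^E}(I_e)$. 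Consequently, for all $g,g'\in G$, the vectors $b_{g,g'}=\alpha$ and $c_{g,g'}=c_{g',g}=0$ give
\[
\spoly(g,g')=\tfrac{1}{c_g}\,g-\tfrac{1}{c_{g'}}\,g'.
\]

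Next I extract the cancellation condition. Writing $g=c_g\ul x^\alpha(e)+\tail_{\prec^E}(g)$ in terms of $\tau_{(A^E,\prec^E)}(g)$, the coefficient of $\ul x^\alpha(e)$ in $m=\sum_{g\in G}a_g g$ equals $\sum_{g\in G}a_gc_g$; but the hypothesis $\lm_{\prec^E}(m)\prec^E\ul x^\alpha(e)$ forces this coefficient to vanish, so
\[
\sum_{g\in G}a_gc_g=0.
\]

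Finally I apply the standard telescoping trick. Fix any $g_0\in G$; then using $\sum_{g\in G}a_gc_g=0$,
\[
m=\sum_{g\in G}a_g c_g\cdot\tfrac{g}{c_g}=\sum_{g\in G}a_gc_g\Bigl(\tfrac{g}{c_g}-\tfrac{g_0}{c_{g_0}}\Bigr)+\Bigl(\sum_{g\in G}a_gc_g\Bigr)\tfrac{g_0}{c_{g_0}}=\sum_{g\in G}a_gc_g\,\spoly(g,g_0),
\]
so the required $d\in\KK^{G\times G}$ is given by $d_{(g,g_0)}:=a_gc_g$ and $d_{(g,g')}:=0$ otherwise.

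The argument is essentially bookkeeping rather than hard analysis; the only nontrivial point—and the only place where the PBW-reduction setting could potentially cause trouble—is confirming the irreducibility of $\ul x^\alpha(e)$ so that $\spoly(g,g')$ does not get zeroed out by the case distinction in \Cref{817}\eqref{817b}. Once that is in hand, everything reduces to a single $\K$-linear identity, so no induction on $\prec^E$ or interaction with the commutation relations $S_e$ or the ideals $I_e$ is needed.
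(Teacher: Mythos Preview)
Your proof is correct and is exactly what the paper has in mind: the paper does not write out a proof of this lemma at all, merely stating that the argument ``from the commutative setting carries over word by word.'' Your verification that $\ul x^\alpha(e)$ is irreducible (so the $s$-polynomial formula does not collapse to zero) and the subsequent telescoping via $\sum_{g\in G}a_gc_g=0$ is precisely that standard argument, made explicit in the PBW-reduction setting.
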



The following remark lists some facts that are used throughout our proof of \Cref{816}:


\begin{rem}\label{856} 
Let $A$ be a PBW-reduction-algebra, $E$ a finite set, $\prec^E_o$ an ordering on $(A^E,\prec^E)=(\Tn,S_e,I_e,\prec_e)_{e\in E}$. Define for $l\in \NN$, $1\leq i_1,\dots,i_l\leq n$ the vector $\alpha:=\sum_{1\leq j\leq l} e_{i_j}\in \NN^n$ and let $e\in E$.
\begin{enumerate}

\item\label{856a} We have $\ul x^{\alpha}(e)\preceq^E_o x_{i_1}\cdots x_{i_l}(e)$. 

\item\label{856b} Independently of the choice of $\prec_o^E$, we can find $r_{i_1,\dots,i_l}\in \Pn$ and $f_{i_1,\dots,i_l}\in \K^*$ with $\lec_{\prec^E_o}(r_{i_1,\dots,i_l}(e))\prec^E_o (\alpha,e)$ such that
 \[
 x_{i_1}\cdots x_{i_l}-f_{i_1,\dots,i_l}\ul x^{\alpha}-r_{i_1,\dots,i_l} \in \lrideal{\Tn}{S_e}
 \]
 and hence
 \[
 \ol{x_{i_1}\cdots x_{i_l}(e)}=\ol{f_{i_1,\dots,i_l}\ul x^{\alpha}(e)+r_{i_1,\dots,i_l}(e) }\in A^E.
 \]
In particular, for any permutation $\sigma $ of the set $\{1,\dots,l\}$ 
 \[
\ol{\frac{1}{f_{i_1,\dots,i_l}}x_{i_1}\cdots x_{i_l}(e)-\frac{1}{f_{i_ {\sigma(1)},\dots,i_{\sigma(l)}}}x_{i_{\sigma(1)}}\cdots x_{i_{\sigma(l)}}(e)}= \ol t 
 \]
for some $t\in \Pn^E$ with $\lm_{\prec^E_o}(t) \prec^E_o (\alpha,e)$.
 Suppose now $\prec^E_o=\prec^E$ is fixed. If $\ul x^{\alpha}(e)\in (\Tn^E)^{\irr}_{A^E,\prec^E}$ then $f_{i_1,\dots,i_l}$ and $r_{i_1,\dots,i_l}$ can be additionally chosen such that
 $$\rho_{A^E,\prec^E}(x_{i_1}\cdots x_{i_l}(e))=f_{i_1,\dots,i_l}\ul x^{\alpha}(e)+ r_{i_1,\dots,i_l}(e).$$ 
 Otherwise $\lec_{\prec^E}(\rho_{A^E,\prec^E}(x_{i_1}\cdots x_{i_l}(e)))\prec^E (\alpha,e)$. 

\item\label{856c} Let $a\in A$ and $g\in A^E$. Then $\lec_{\prec^E}(ag)\preceq^E \lE_{\prec^E_{\lcomp(g)}}(a)+\lec_{\prec^E}(g)$ with equality if and only if the monomial with extended leading exponent $\lE_{\prec^E_{\lcomp(g)}}(a)+\lec_{\prec^E}(g)$ is irreducible.

\end{enumerate}
\end{rem}


\begin{proof}[Proof of \Cref{816}] 
By \Cref{818} it is clear that if $G$ is a Gröbner basis, then every normal form stated in the criterion is $0$.
Conversely, consider $0\neq m\in \lideal{A}{G}$ and choose $h\in A^G$ such that
 \begin{equation}\label{857}
 m=\sum_{g\in G} h_g g
 \end{equation}
satisfying additionally that
\[
 (\alpha,e):=\max\nolimits_{\prec^E} \{\lE_{\prec^E_{\lcomp(g)}}(h_g)+\lec_{\prec^E}(g)\mid g\in G\}
\]
is minimal with respect to $\prec^E$. If $(\alpha,e)\preceq^E \lec_{\prec^E}(m)$ then \Cref{857} is a standard representation and we are finished. Otherwise, set 
\[
 G':=\{g\in G\mid \lE_{\prec^E_{\lcomp(g)}}(h_g)+\lec_{\prec^E}(g)=(\alpha,e)\}
\]
and write
\begin{equation}\label{863}
 m=l+\sum_{g'\in G'} \tail_{\prec^E_e}(h_{g'}) g'+\sum_{g\in G\setminus G'} h_g g \quad \text{with}\quad l=\sum_{g'\in G'} \lt_{\prec^E_e}(h_{g'}) g'.
\end{equation}
By \Cref{856}.\ref{856c} and by choice of $G'$, we have for $g'\in G'$ 
\begin{align}\label{864}
 \lec_{\prec^E}( \tail_{\prec^E_e}(h_{g'}) g')\preceq^E& \lE_{\prec_e^E}( \tail_{\prec_e^E}(h_{g'}))+\lec_{\prec^E}(g')\\
 \prec^E & \lE_{\prec_e^E}(h_{g'})
 +\lec_{\prec^E}(g')=(\alpha,e),\nonumber
\end{align}
and for $g\in G\setminus G'$
\begin{equation}\label{865}
\lec_{\prec^E}(h_g g)\preceq^E \lE_{\prec_{\lcomp(g)}^E}( h_g)+\lec_{\prec^E}(g)\prec^E (\alpha,e).
\end{equation}
Hence
the leading monomial of $l$ is strictly smaller than $\ul x^\alpha (e)$.
We distinguish two cases: If $\ul x^\alpha(e)\in (\Tn^E)^{\irr}_{A^E,\prec^E}$ then all summands of $l$ have leading monomial $\ul x^\alpha(e)$ according to \Cref{856}.\ref{856c}. So \Cref{815} yields coefficients $d\in \KK^{G'\times G'}$ to write
\begin{equation}\label{825}
 l=\sum_{(g,g')\in G'\times G'} d_{(g,g')}s_{(g,g')},
\end{equation}
as a linear combination of $s$-polynomials 
\begin{align}\label{n7}
 s_{(g,g')}=&\spoly(\lm_{\prec_e^E}(h_g)g,\lm_{\prec_e^E}(h_{g'})g')\\
 =&\frac{1}{\lc_{\prec^E}(\lm_{\prec_e^E}(h_g)g)} \lm_{\prec_e^E}(h_g)g
-
\frac{1}{\lc_{\prec^E}(\lm_{\prec_e^E}(h_{g'})g') }\lm_{\prec_e^E}(h_{g'})g'.\nonumber
\end{align}
By definition of $c_{g,g'}$ and $c_{g',g}$ (see \Cref{817}.\ref{817b}) there exists $\beta_{(g,g')}\in \NN^n$ such that $c_{g,g'}+\beta_{(g,g')}=\lE_{\prec^E_e}(h_g)$ and $c_{g',g}+\beta_{(g,g')}=\lE_{\prec^E_e}(h_{g'})$. Applying \Cref{856}.\ref{856b}, we obtain
\begin{align*}\label{861}
s_{(g,g')}=& (d_g\ul x^{\beta_{(g,g')}}\ul x^{c_{g,g'}}+r^{(g,g')})g -(d_{g'}\ul x^{\beta_{(g,g')}} \ul x^{c_{g',g}}+{r}^{(g',g)})g' \\
=&\ul x^{\beta_{(g,g')}}\left( d_g\ul x^{c_{g,g'}}g -d_{g'} \ul x^{c_{g',g}}g' \right)+r^{(g,g')} g+r^{(g',g)}g'
\end{align*}
for suitably chosen $d_g,d_{g'}\in \KK^*$ and $r^{(g,g')},{r}^{(g',g)}\in A$ with 
\begin{equation*}
\lm_{\prec^E_e}(r^{(g,g')})\prec^E_e \lm_{\prec_e^E}(h_g)\text{ and }\lm_{\prec^E_e}({r}^{(g',g)})\prec^E_e \lm_{\prec_e^E}(h_{g'}).
\end{equation*}
Adding $\lec_{\prec^E}(g)$ and $\lec_{\prec^E}(g')$ respectively, we obtain 
\begin{equation}\label{866}
\lm_{\prec^E_e}(r^{(g,g')})+\lec_{\prec^E}(g),\ \lm_{\prec^E_e}({r}^{(g',g)})+ \lec_{\prec^E}(g') \prec^E_e (\alpha,e).
\end{equation}
As $\ul x^\alpha(e)$ is irreducible and ${c_{g,g'}}+\beta_{(g,g')}+\lm_{\prec^E}(g)=(\alpha,e)={c_{g',g}}+\beta_{(g,g')}+\lm_{\prec^E}(g')$, the monomial with extended leading coefficient $ {c_{g,g'}}+\lm_{\prec^E}(g)= {c_{g',g}}+\lm_{\prec^E}(g')$ is also irreducible.
By \Cref{n7} and \Cref{860} $\lm_{\prec^E}(s_{(g,g')})\prec \ul x^\alpha(e)$. Using \Cref{866} it follows 
that $\lt_{\prec^E}(d_g\ul x^{c_{g,g'}}g )=\lt_{\prec^E}(d_{g'} \ul x^{c_{g',g}}g')$.
By definition of $\spoly(g,g')$ it means that
$$ d_g\ul x^{c_{g,g'}}g -d_{g'} \ul x^{c_{g',g}}g'=f_{(g,g')} \spoly(g,g') $$
 for some $f_{(g,g')}\in \KK^*$.
Substituting into \Cref{n7} yields
\begin{equation}\label{862}
 s_{(g,g')}=f_{(g,g')}\ul x^{\beta_{(g,g')}}\spoly(g,g')+r^{(g,g')} g+{r'}^{(g,g')}g'
\end{equation}
and 
\begin{equation}\label{868}
 {\beta_{(g,g')}}+\lec_{\prec^E}(\spoly(g,g'))\prec^E (\alpha,e).
\end{equation}
By hypothesis we find an element $k^{(g,g')}\in A^G$ satisfying
\begin{equation}\label{826}
\spoly(g,g')=\sum_{g''\in G}k_{g''}^{(g,g')} g''
\end{equation}
and $\lE_{\prec^E_{\lcomp(g'')}}(k_{g''}^{(g,g')} )+\lec_{\prec^E}(g'')\preceq^E \lec_{\prec^E}(\spoly(g,g'))$. This yields together with \Cref{856}.\ref{856c} and \Cref{868} the estimate
 \begin{align}\label{867}
 \lE_{\prec^E_{\lcomp(g'')}}( \ul x^{\beta{(g,g'})}k_{g''}^{(g,g')} )+\lec_{\prec^E}(g'')&\preceq^E \beta_{(g,g')}+\lE_{\prec^E_{\lcomp(g'')}}(k_{g''}^{(g,g')} )+\lec_{\prec^E}(g'')\\\nonumber
 & \preceq^E \beta_{(g,g')}+\lec_{\prec^E}(\spoly(g,g'))\\\nonumber
 &\prec^E (\alpha,e).\nonumber
 \end{align}
Combining \Cref{825,862,826} we obtain
\[
 l=\sum_{(g,g')\in G'\times G'} d_{(g,g')} \left(f_{(g,g')}\sum_{g''\in G}\ul x^{\beta{(g,g'})} k_{g''}^{(g,g')} g''+r^{(g,g')} g+{r}^{(g',g)}g'\right)
\]
 and substituting into \Cref{863} contradicts the minimality of $(\alpha,e)$ by \Cref{864,865,866,867}.

In the other case, $\ul x^\alpha(e)$ is reducible, say $\alpha=\beta+\lm_{\prec^E_e}(p)$ for some $p\in I_e$ and $\beta\in \NN^n$. Then there exists by definition of $\spoly(g,p)$ for $g\in G'$ a vector $\gamma_{g}\in \NN^n$ such that 
\[
\lE_{\prec_e^E}(h_g)+\lec_{\prec^E}(g)=(\alpha,e)=\gamma_g+c_{g,p}+\lec_{\prec_E}(g)
\]
(see \Cref{817}.\ref{817c} for the definition $c_{g,p}$).
Therefore there is $q_g\in \KK^*$
\[
 \lm_{\prec_e^E}(h_g) g=(q_g\ul x^{\gamma_g} \cdot \ul x^{c_{g,p}}+t_g)g=q_g\ul x^{\gamma_g}\cdot \spoly(g,p)+ t_g g
\]
with $t_g\in A$ such that $\lE_{\prec_e^E}(t_g)\prec_e^E \lE_{\prec_e^E}(h_g)$ by \Cref{856}.\ref{856b}. Using that $$\gamma_g+\lec_{\prec^E}(\spoly(p,g))\prec^E \gamma_g+ c_{g,p}+\lec_{\prec_E}(g) =(\alpha,e)$$
by \Cref{860} and that $ \spoly(g,p)$ has a vanishing normal form with respect to $G$, we may argue as in the first case. This finishes our proof.
\end{proof}


The above \namecref{815} yields the following 
\namecref{829} for computing Gröbner bases:


\begin{algorithmbis}[Given a PBW-reduction-algebra $A$, a well-ordering $\prec^E$ and a finite set $G\subseteq A^E$, this algorithm computes a Gröbner basis of the module $\lideal{A}{G}$ with respect to $\prec^E$.]
\label[algorithm]{829}
\begin{algorithmic}[1]
\REQUIRE {A PBW-reduction-algebra $A$, a finite set $E$, $(A^E,\prec^E)=(\Tn,S_e,I_e,\prec_e)_{e\in E}$ and $G\subseteq A^E$ finite.}
\ENSURE A finite set $H\subseteq A^E$ such that $H$ is a Gröbner basis of $\lideal{A}{G}$ with respect to $\prec^E$.
\STATE Initialize $H:=G\setminus \{0\}:=\{g_1,\dots,g_s\}$.
\STATE Set $T:=\{(g_i,g_j)\mid 1\leq i<j\leq s \} \cup \{(g,p)\mid g\in H, p\in I_{\lcomp(g)}\}$.
\WHILE {$T\neq \emptyset$}
\STATE Choose $(t_1,t_2)\in T$ and delete it from $T$.
\STATE Compute a normal form $r$ of $\spoly(t_1,t_2)$ with respect to $H$ and $\prec^E$ by applying \Cref{828}.
\IF {$r\neq 0$}
\STATE Set $T:=T\cup\{(r,h)\mid h\in H\}\cup \{(r,p)\mid p\in I_{\lcomp(r)}\}$ and $H:=H\cup \{r\}$.
\ENDIF
\ENDWHILE
\RETURN $H$.
\end{algorithmic}
\end{algorithmbis}


\begin{lem}\label{819}
\Cref{829} is correct and terminates.
\end{lem}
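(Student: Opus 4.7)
The plan is to treat correctness and termination separately, reducing both to standard machinery already set up in the paper: the Buchberger criterion \Cref{816}, the normal-form algorithm \Cref{828}, and Dickson's Lemma for the exponent-set bookkeeping.

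For correctness I would first establish the loop invariant $\lideal{A}{H}=\lideal{A}{G}$. Every element $r$ appended to $H$ in line~7 is the normal form of some $\spoly(t_1,t_2)$ with respect to the current $H$; by definition of $s$-polynomial and of \Cref{828}, both the $s$-polynomial and the reduction steps are $A$-linear combinations of the current $H$ (modulo the relations of $A$), so $r\in\lideal{A}{H}$ and the invariant is preserved. When the algorithm exits we have $T=\emptyset$, meaning that every pair $(g,g')\in H\times H$ and every pair $(g,p)$ with $p\in I_{\lcomp(g)}$ has been processed and yielded normal form $0$ at the moment of processing. Since $H$ only grows and a vanishing normal form witnesses membership in the submodule generated by the current $H$, \Cref{818} guarantees that any normal form of those $s$-polynomials with respect to the \emph{final} $H$ is also $0$. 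The Buchberger criterion \Cref{816} then concludes that the returned $H$ is a Gröbner basis of $\lideal{A}{G}$ with respect to $\prec^E$.

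For termination the key observation is that each nonzero $r$ added in line~7 strictly enlarges the exponent set $L_{\prec^E}(H)\subseteq \NN^n\times E$: by definition of normal form $\lec_{\prec^E}(r)\notin L_{\prec^E}(H)$ at the moment $r$ is produced. Since $E$ is finite and $\NN^n$ is well-quasi-ordered componentwise, Dickson's Lemma forbids an infinite strictly ascending chain of upper sets of the form $L_{\prec^E}(H)$. Hence only finitely many nonzero reductions ever occur, so $H$ stabilises after finitely many iterations. I would then note that $T$ stays finite throughout: the initial $T$ is finite because $G$ and each $I_{\lcomp(g)}$ are, and every enlargement in line~7 adjoins only $|H|+|I_{\lcomp(r)}|$ new pairs. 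Once $H$ has stabilised, each subsequent pass of the outer loop strictly reduces $|T|$ without adding to it, so $T$ is emptied in finitely many further steps. The inner call to \Cref{828} terminates because $\prec^E$ is a well-ordering, so each iteration of the outer loop is itself finite.

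The main obstacle is the termination part, and specifically the reduction to Dickson's Lemma for the ascending chain $L_{\prec^E}(H)$; once that is handled, correctness is a direct application of \Cref{816} together with the monotonicity of $H$ and \Cref{818}, and the $T$-bookkeeping is routine.
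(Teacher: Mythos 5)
Your proof is correct and follows essentially the same route as the paper: correctness is reduced to the Buchberger criterion \Cref{816}, and termination to the stabilization of the increasing chain of $\NN^n$-stable sets $\operatorname{L}_{\prec^E}(H)$ in $\NN^n\times E$, which the paper justifies by Noetherianity of $\Pn^E$ and you by the equivalent Dickson's Lemma. One small inaccuracy: the claim that the final $H$ still reduces the already-processed $s$-polynomials to $0$ should not be justified via \Cref{818} (which presupposes a Gröbner basis), but simply by observing that a representation with remainder $0$ relative to an earlier $H'\subseteq H$ remains a valid normal form relative to the final $H$, which is exactly what the ``some / any'' phrasing of \Cref{816} permits.
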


\begin{proof} 
Correctness follows immediately from \Cref{816}. The $\operatorname{L}(H)$ form an increasing sequence of $\NN^n$-stable subsets of $\NN^n\times E$. By definition of a normal form it stabilizes exactly if $H$ does. Elements of $\NN^n\times E$ identify with monomials in $\Pn^E$. The latter is Noetherian and termination follows.
\end{proof} 


As in the commutative setting, the above algorithm can be modified to compute a reduced Gröbner basis.
An algorithm for computing left generators of a two-sided submodule of a free $A$-module carries over immediately from the setting of PBW-algebras (see e.g. \cite[Algorithm 6]{BuesoBook}). In our setting termination is a consequence of \Cref{813}.
Together with \Cref{819} this yields:


\begin{prp}\label{294}Let $A$ be a PBW-reduction-algebra, $E$ a finite set, $(A^E,\prec^E)=(\Tn,S_e,I_e,\prec_e)_{e\in E}$ and $G\subseteq A^E$ a finite subset. Then (reduced) Gröbner bases of the left $A$-modules
$\lideal{A}{G}$ and $\lrideal{A}{G}$ with respect to $\prec^E$ are computable. 
\end{prp}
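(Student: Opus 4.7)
The plan is to assemble the proposition from the machinery already built in this subsection. For the left module $\lideal{A}{G}$, \Cref{829} together with \Cref{819} directly produces a Gröbner basis with respect to $\prec^E$, so there is nothing left to do in that case beyond citing these results. The point is that \Cref{816} guarantees that once all $s$-polynomials $\spoly(g,g')$ for $g,g'\in H$ and all $\spoly(g,p)$ for $g\in H$, $p\in I_{\lcomp(g)}$ reduce to $0$ modulo $H$, the set $H$ is a Gröbner basis; and \Cref{819} shows that \Cref{829} indeed reaches such a state in finitely many iterations.

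For the two-sided module $\lrideal{A}{G}$ I would reduce to the left-module case by first computing a finite set of left generators $G'\subseteq \lrideal{A}{G}$ and then applying \Cref{829} to $G'$ (with respect to~$\prec^E$). The procedure to produce $G'$ is the standard one for PBW-algebras as presented in \cite[Algorithm 6]{BuesoBook}: enlarge the current candidate set $H\subseteq A^E$ by elements of the form $x_i \cdot h$ for $h\in H$ and $1\le i\le n$ (reduced modulo the left Gröbner basis computed so far), iterating until the left $A$-module generated by $H$ stabilizes. The only step of the argument from \cite{BuesoBook} that needs to be re-checked in our more general setting is termination; here it follows from the left Noetherianity of $A$ established in \Cref{813}, which prevents the ascending chain of left submodules $\lideal{A}{H_0}\subseteq \lideal{A}{H_1}\subseteq\cdots$ from being strictly increasing indefinitely. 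Once $H$ is stable, $\lideal{A}{H}$ absorbs right multiplication by each $x_i$ and hence by all of $A$, so $\lideal{A}{H}=\lrideal{A}{G}$ and $H$ is the desired set of left generators.

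To obtain a \emph{reduced} Gröbner basis I would post-process the output $H$ of \Cref{829} exactly as in the commutative case (cf.\ the procedure following \Cref{828} and the cited \cite[Algorithm 1.6.11]{SingularBook}): first normalize each $g\in H$ so that $\lc_{\prec^E}(g)=1$; then iteratively replace each $g\in H$ by its reduced normal form with respect to $H\setminus\{g\}$, discarding zero results and elements whose leading monomial becomes divisible by the leading monomial of another basis element. This preserves both $\lideal{A}{H}$ and the Gröbner property (by \Cref{816}, since $s$-polynomial reductions are invariant under such interreductions) and terminates because the set $\operatorname{l}_{\prec^E}(H)$ of leading exponents only shrinks.

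The main obstacle, and really the only nontrivial input, is the termination of the two-sided generation step. This is where the structural result \Cref{813} is indispensable: without left Noetherianity the natural iterative construction of left generators of $\lrideal{A}{G}$ could produce an infinite strictly ascending chain of left submodules, whereas with \Cref{813} in hand the stabilization argument from the PBW-algebra setting transports verbatim, and the rest of the proof is the routine combination of \Cref{816,819,828,829}.
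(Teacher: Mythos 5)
Your proposal matches the paper's own (very brief) justification exactly: the left-module case is \Cref{829} plus \Cref{819}, the two-sided case carries over the PBW-algebra procedure of \cite[Algorithm 6]{BuesoBook} with termination supplied by the Noetherianity result \Cref{813}, and the reduced case is the standard commutative interreduction. No gaps; the approach is the same.
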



\begin{dfn}\label{f1}
 We call $\Tn/\ideal{I'\cup S}=(\Tn,S,I',\prec)$ a \emph{factor PBW-reduction-algebra} of $A=(\Tn,S,I,\prec)$ if $I\subseteq I'$.
\end{dfn}


The following result explains how we consider factor algebras of PBW-reduction-algebras as PBW-reduction-algebras.


\begin{cor}\label{840}
Let $A=(\Tn,S,I,\prec)$ be a PBW-reduction-algebra and $M\subseteq A$ a two-sided $A$-ideal. Then
$
A/ {M}
$
is canonically isomorphic to the PBW-reduction-algebra
\[
\Tn/\ideal{S\cup I\cup \tau_{A,\prec}(G)}=(\Tn,S,I\cup \tau_{A,\prec}(G),\prec),
\]
 where $G$ is a left Gröbner basis of ${M}$ with respect to $\prec$.
\end{cor}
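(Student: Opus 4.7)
The plan is to work with the preimage $R\subseteq \Tn$ of $M$ under the canonical projection $\Tn\twoheadrightarrow A$. This $R$ is a two-sided ideal containing $\ideal{S\cup I}$ with $\Tn/R\cong A/M$, so the corollary reduces to two claims: (i) $R=\ideal{S\cup I\cup \tau_{A,\prec}(G)}$, and (ii) $(\Tn,S,I\cup \tau_{A,\prec}(G),\prec)$ satisfies condition \eqref{808b} of \Cref{808}.

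For (i), the inclusion $\supseteq$ is immediate since every $\tau_{A,\prec}(g)$ maps to $g\in M$. Conversely, given $r\in R$ the image $\bar r\in M$ admits, by the left Gröbner basis property of $G$, a representation $\bar r=\sum_{g\in G}a_gg$ with $a_g\in A$; lifting each $a_g$ to some $\tilde a_g\in \Tn$ produces $r-\sum_g\tilde a_g\tau_{A,\prec}(g)\in \ideal{S\cup I}$, which proves the reverse inclusion.

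For (ii), by \Cref{837} it suffices to show $\operatorname{L}_\prec(I\cup \tau_{A,\prec}(G))=\operatorname{L}_\prec(R\cap \Pn)$, where $\subseteq$ is automatic from $I\cup \tau_{A,\prec}(G)\subseteq R\cap \Pn$. For $\supseteq$, take $0\neq r\in R\cap \Pn$. If $\lE_\prec(r)\in \operatorname{L}_\prec(I)$ the claim is immediate. Otherwise the leading monomial of $r$ is irreducible by \Cref{n1}; every reduction of $r$ with respect to $(\ideal{S\cup I},\prec)$ then only affects strictly smaller monomials, and therefore $\tau_{A,\prec}(\bar r)$ still has leading exponent $\lE_\prec(r)$. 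In particular $\bar r\in M$ is nonzero, so the standard representation of \Cref{293}\ref{293a} yields $a\in A^G$ with $\bar r=\sum_{g\in G}a_gg$ and $\lE_\prec(a_g)+\lec_\prec(g)\preceq\lec_\prec(\bar r)$ for all $g$. Combined with the bound $\lec_\prec(a_gg)\preceq \lE_\prec(a_g)+\lec_\prec(g)$ from \Cref{856}.\ref{856c}, the leading term of $\bar r$ could not materialize if these bounds were strict for every $g$; hence some $g\in G$ realizes $\lE_\prec(a_g)+\lec_\prec(g)=\lec_\prec(\bar r)$, and consequently $\lE_\prec(r)=\lec_\prec(\bar r)\in \operatorname{L}_\prec(\tau_{A,\prec}(G))$.

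The main obstacle will be this last forced equality: one has to combine the pointwise bound of \Cref{856}.\ref{856c} with the fact that a representation whose summands all stay strictly below $\lec_\prec(\bar r)$ cannot produce $\bar r$ itself. Once this is established, both parts (i) and (ii) follow from routine manipulations with the reduction map $\tau_{A,\prec}$ and the Gröbner basis machinery already developed in the paper.
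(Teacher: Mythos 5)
Your proposal is correct and follows essentially the same route as the paper: the isomorphism is reduced to identifying the preimage ideal, and condition \eqref{808b} is verified via \Cref{837} by taking $0\neq t\in\ideal{S\cup I\cup\tau_{A,\prec}(G)}\cap\Pn$, splitting into the cases $\lE(t)\in\operatorname{L}_\prec(I)$ and $\lm(t)$ irreducible, and in the latter case extracting from a standard representation of $\ol t\in M$ a $g\in G$ with $\lE(a_g)+\lE(g)=\lE(t)$. Your extra justification that equality must hold for some $g$ (and the explicit check that $R=\ideal{S\cup I\cup\tau_{A,\prec}(G)}$) only spells out steps the paper treats as clear.
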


\begin{proof}
Clearly the map $\Tn\to A,\, t\mapsto \ol t$
induces the claimed isomorphism. For the second claim it is by \Cref{837} enough to show that
\[
\operatorname{L}_\prec(I\cup \tau_{(A,\prec)}(G))\supseteq \operatorname{L}_\prec(\lrideal{\Tn}{S\cup I\cup \tau_{A,\prec}(G)}\cap \Pn).
\]
So consider $0\neq t\in \lrideal{\Tn}{S\cup I\cup \tau_{A,\prec}(G)}\cap \Pn$. If $\lE(t)\in \operatorname{L}_\prec(I)$, we are finished. Otherwise we have according to \Cref{808b} that $\lm(t)$ is irreducible with respect to $(\lrideal{\Tn}{S\cup I},\prec)$ and hence $\lm(t)=\lm(\rho_{A,\prec}(t))=\lm(\ol{t})$. 
By construction $\ol t\in M$ and for suitable coefficients $a\in A^G$ there is a standard representation with respect to $G$ 
\[
\ol{t}=\sum_{g\in G} a_g g \text{ and } \lE(a_g)+\lE(g)\preceq \lE(\ol{t})=\lE(t) \text{ for all } g\in G
\]
with equality for some $g'\in G$. With $\lE(g')=\lE( \tau_{A,\prec}(g'))$ the claim follows.
\end{proof}
 

\begin{dfn}\label{296} Let $A$ be a ring, $E$ a finite set and $H_1,\dots, H_s\subseteq A^{E}$ finite subsets. The $A$-module
\[
 \syz_A(H_1,\dots,H_s):=\{(a_1,\dots,a_s)\in A^{H_1}\oplus\dots \oplus A^{H_s}\mid \sum_{1\leq i\leq s}\sum_{h_i\in H_i} (a_i)_{h_i} h_i=0\}
\]
is called the \emph{syzygy-module} of $H_1,\dots,H_s$ (in $A^{H_1}\oplus\dots \oplus A^{H_s}$).
Similarly, for $h_1,\dots,h_t\in A^E$ we define the \emph{syzygy-module} $
 \syz_{A}(h_1,\dots,h_t):=\syz_{A}(\{h_1\},\dots,\{h_t\}). 
$
\end{dfn}


Syzygies over PBW-reduction-algebras can be computed as in the commutative case:


\begin{lem}\label{834}
Let $A=(\Tn,S,I,\prec)$ be a PBW-reduction-algebra, $E$ a finite set and $H\subseteq A^E$ finite. Let $G$ be a Gröbner basis of $\lideal{A}{\{h+(h)\mid h\in H\} }\subseteq A^{E\sqcup H}$ with respect to $(<,\prec^{E\sqcup H})$, where $<$ is a total ordering on $E\sqcup H$ with $h<e$ for $e\in E$ and $h\in H$. Then
\[
\syz_A(H)=\lideal{A}{ \pi_H(G\cap A^H)}.
\]
\end{lem}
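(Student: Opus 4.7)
The plan is to interpret the augmented module $A^{E \sqcup H} = A^E \oplus A^H$ so that the $A^H$ summand records coefficient tuples, and then exploit that $(<, \prec^{E \sqcup H})$ is a position-over-term-style ordering in which every monomial with component in $H$ is strictly smaller than every monomial with component in $E$.

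First I would prove the easy inclusion $\lideal{A}{\pi_H(G \cap A^H)} \subseteq \syz_A(H)$. Any $g \in G \cap A^H$ lies in $\lideal{A}{\{h + (h) \mid h \in H\}}$, so one can write $g = \sum_{h \in H} a_h (h + (h))$ for some $a_h \in A$. Projecting onto the $E$-summand gives $0 = \sum_h a_h h$, so $(a_h)_h \in \syz_A(H)$; and using $g \in A^H$, $\pi_H(g) = \sum_h a_h (h)$ is precisely this syzygy viewed inside $A^H$. Closure of $\syz_A(H)$ under the $A$-action yields the claim.

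The main work is the reverse inclusion. Given $(a_h)_h \in \syz_A(H)$, set $\sigma := \sum_h a_h (h + (h))$. By the syzygy relation the $E$-part of $\sigma$ vanishes, so $\sigma \in A^H$, while by construction $\sigma \in \lideal{A}{\{h + (h) \mid h \in H\}}$. Since $G$ is a Gröbner basis with respect to $(<, \prec^{E \sqcup H})$, there is a standard representation $\sigma = \sum_{g \in G} b_g g$ satisfying $\lE(b_g) + \lec(g) \preceq \lec(\sigma)$ for every $g \in G$. The key observation is that $\lcomp(\sigma) \in H$, whereas $\lcomp(\lE(b_g) + \lec(g)) = \lcomp(g)$; hence in the POT-style ordering the inequality $\lE(b_g) + \lec(g) \preceq \lec(\sigma)$ forces $b_g = 0$ whenever $\lcomp(g) \in E$. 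For the remaining $g$ we have $\lcomp(g) \in H$, which, again by the choice $h < e$, forces the entire $E$-part of $g$ to vanish, i.e.\ $g \in A^H$. Thus $\sigma = \sum_{g \in G \cap A^H} b_g g$, and applying $\pi_H$ (which restricts to the identity on $A^H$) gives $\sigma = \sum_{g \in G \cap A^H} b_g \pi_H(g) \in \lideal{A}{\pi_H(G \cap A^H)}$. Identifying $\sigma$ with the tuple $(a_h)_h$ finishes the proof.

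The main obstacle will be the step just sketched: one must translate the abstract standard-representation inequality $\lE(b_g) + \lec(g) \preceq \lec(\sigma)$ into the concrete statement that only $g \in A^H$ can contribute, using only the component-level order $<$ together with the convention that $\lcomp(\lE(b_g) + \lec(g)) = \lcomp(g)$. Once this is extracted from the ordering convention, both inclusions reduce to direct bookkeeping with the projection $\pi_H$.
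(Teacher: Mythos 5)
Your proof is correct and is precisely the standard commutative syzygy argument that the paper itself invokes without writing out (``Syzygies over PBW-reduction-algebras can be computed as in the commutative case''): the POT-style block structure forces every Gröbner basis element and coefficient occurring in the standard representation of the syzygy element $\sigma\in A^H$ to live entirely in the $A^H$-summand. The only point worth adding is that when $\prec^{E\sqcup H}$ is not a well-ordering one should run the same component argument on representatives in $\K[\ul x]^{E\sqcup H}$ as in \Cref{846} (a representative of $\sigma$ can always be chosen in $\K[\ul x]^{H}$, and a leading component in $H$ forces the whole representative $h_g$ into $\K[\ul x]^{H}$), which changes nothing essential.
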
 


\begin{rem}\label{1001}
Given a PBW-reduction algebra $A=(\Tn,S,I,\prec)$ and a finite set $E$ the following Gröbner basics can be performed as in the commutative setting: 
 \begin{enumerate}
 \item\label{1001a} 
 We can decide for submodules of $A^E$ whether one is included in the other using normal form computations with respect to any ordering $\prec^E$ with $\prec^E_e=\prec$ for all $e\in E$ (see \Cref{833}\eqref{833a}).
 \item\label{1001b}
 A non-commutative variant of \cite[Section 2.8.3]{SingularBook} allows to compute intersections of submodules of $A^E$,
 \end{enumerate}
\end{rem}


In the next \namecref{368}, we explain how to compute Gröbner bases with respect to non-well-orderings.

\section{Weight filtrations}\label{368}

The subject of investigation in this \namecref{368} are filtrations of type $F^\mathbf{u}_\bullet A$ induced by a so-called weight vector $\mathbf{u}$ on the PBW-reduction-algebra $A$. These filtrations have been studied theoretically and algorithmically for nonnegative weight vectors on PBW-algebras in \cite{BuesoBook}. 
Combining the methods of \cite{BuesoBook} and \cite{OT2001}, we develop a Gröbner basis algorithm 
for computing $F^\mathbf{u}_\bullet A$ for general weight vectors $\mathbf{u}$. 

\subsection{Weight filtrations on PBW-reduction-algebras}\label{297}

In this \namecref{297} let $A=(\Tn,S,I,\prec)$ be a PBW-reduction-algebra unless stated otherwise.


\begin{dfn}\label{298}
 Let $\mathbf{u}\in \ZZ^n$, $E$ a finite set, $L\subseteq A^E$ an $A$-submodule and $\mathbf{s}\in \ZZ^E$.
 \begin{enumerate}

\item\label{298c} Assigning weight $\mathbf{u}_{i}$ to $x_i$ and weight $\mathbf{s}_e$ to $(e)$ defines a $\ZZ$-grading on $\Tn^E$ with $l$th graded piece
$
 (\Tn^E)_l^{\mathbf{u}[\mathbf{s}]}=\bigoplus_{e\in E}\Tn_{l-\mathbf{s_e}}^\mathbf{u} (e)
$
with $$\Tn_{l}^\mathbf{u}:=\lideal{\K}{\{x_{i_1}\cdots x_{i_k}\mid k\in \NN, 1\leq i_1,\dots,i_k\leq n, \sum\nolimits_{1\leq j\leq k} \mathbf{u}_{i_j}=l\}}.$$
So every $0\neq r\in \Tn^E$ can be uniquely written as $r=\sum_{s_1\leq i\leq s_2} r_i$ with $r_i\in (\Tn^E)_i^{\mathbf{u}[\mathbf{s}]}$ and $r_{s_1},r_{s_2}\neq 0$. We call $s_2$ the \emph{${\mathbf{u}[\mathbf{s}]}$-degree} of $r$ and write $\deg_{\mathbf{u}[\mathbf{s}]}(r)=s_2$. We set $\deg_{\mathbf{u}[\mathbf{s}]}(0):=-\infty$. If $s_1=s_2$, we say that $r$ is \emph{${\mathbf{u}[\mathbf{s}]}$-homogeneous}. We define the \emph{${\mathbf{u}[\mathbf{s}]}$-leading terms} of $r$ by $\lt_{\mathbf{u}[\mathbf{s}]}(r):=r_{s_2}$. The elements $r_{s_1},\dots,r_{s_2}$ are called the \emph{${\mathbf{u}[\mathbf{s}]}$-homogeneous parts} of $r$.

\item\label{298a} The associated filtration on $F^\mathbf{u}[\mathbf{s}]_\bullet$ on $\Tn^E$ is defined by $$F^\mathbf{u}[\mathbf{s}]_k \Tn^E:=\{r\in \Tn^E\mid \deg_{\mathbf{u}[\mathbf{s}]}(r)\leq k\}$$ for $k\in \ZZ$. 
 It induces a quotient filtration $F^\mathbf{u}[\mathbf{s}]_\bullet A^E:=\bigoplus_{e\in E} F^\mathbf{u}_{\bullet-\mathbf{s}_e} A(e)$, where $F_\bullet^\mathbf{u} A:=({F_\bullet^\mathbf{u} \Tn}+\ideal{I\cup S})/\ideal{I\cup S}$.
We define the ${\mathbf{u}[\mathbf{s}]}$-degree for $a\in A^E$ by
 \[
 \deg_{\mathbf{u}[\mathbf{s}]}(a):=\deg_{ F^{\mathbf{u}}[\mathbf{s}]}(a):=\inf\{k\in \ZZ\mid a\in F^\mathbf{u}[\mathbf{s}]_k A^E\}
 \]
and extend it to subsets $T$ of $A^E$ or $\Tn^E$ is by setting $\deg_{\mathbf{u}[\mathbf{s}]} (T):=\max\{ \deg_{\mathbf{u}[\mathbf{s}]}(t)\mid t\in T\}$. 

\item\label{298e} Using the induced filtrations $F^{\mathbf{u}}[\mathbf{s}]_\bullet L:=F^{\mathbf{u}}[\mathbf{s}]_\bullet A^E\cap L$ and $F^{\mathbf{u}}[\mathbf{s}]_\bullet (A^E/L):=({F^{\mathbf{u}}[\mathbf{s}]_\bullet A^E}+L)/L$, we sets the ${\mathbf{u}[\mathbf{s}]}$-degree of elements and subsets of $L$ and $A^E/L$ as in Part~\ref{298a}.

\item We call $\gr^{\mathbf{u}}\!A:=\bigoplus_{k\in \ZZ} F^\mathbf{u}_{k} A/F^\mathbf{u}_{k-1} A$ the \emph{$\mathbf{u}$-graded algebra associated with $A$} and $\gr^{\mathbf{u}[\mathbf{s}]}\!L:=\bigoplus_{k\in \ZZ} F^\mathbf{u}[\mathbf{s}]_{k} L/F^\mathbf{u}[\mathbf{s}]_{k-1} L$ \emph{$\mathbf{u}$-graded module associated with $L$}.

\item\label{298b} We say that $\mathbf{u}$ is a \emph{weight vector} on $A$ if
 $\deg_\mathbf{u}(d_{ij})\leq \deg_\mathbf{u}(x_ix_j)$ for all $1\leq i<j\leq n$, where $S=\{x_jx_i-c_{ij}x_ix_j-d_{ij}\mid 1\leq i<j\leq n\}$. 
In this case we call $F^{\mathbf{u}}_\bullet A$ the \emph{weight filtration} associated to $\mathbf{u}$ on $A$ or the \emph{$\mathbf{u}$-weight filtration} on $A$. If $A\cong \gr^{\mathbf{u}}\!A$, then we say that $A$ is \emph{$\mathbf{u}$-graded} and speak of \emph{$\mathbf{u}$-homogeneous} elements of $A$. 
More generally, if $A$ is $\mathbf{u}$-graded, $E$ a finite set and the shift vector $\mathbf{s}\in \ZZ^E$ assigns degree $\mathbf{s}_e$ to $(e)$, then we call a homogeneous element of $A^E$ also $\mathbf{u}[\mathbf{s}]$-homogeneous. 

\end{enumerate}
We often suppress $\mathbf{s}$ in the above notations if it is the zero vector.
\end{dfn}


\begin{lem}\label{300}
 Let $\mathbf{u}\in \ZZ^n$ be a weight vector on $A$, $E$ a finite set, $\mathbf{s}\in \ZZ^E$ and $L\subseteq A^E$ an $A$-submodule. 
Then we have for all $a,a'\in A$
\[
 \deg_{\mathbf{u}}(a\cdot a') \leq \deg_{\mathbf{u}}(a)+\deg_{\mathbf{u}}(a').
\]
In particular, $F_\bullet^\mathbf{u}A=\lideal{\K}{\{\ol{\ul x^\alpha}\mid \langle \mathbf{u},\alpha\rangle \leq \bullet\}}$ is a filtered $\K$-algebra
and
$F^{\mathbf{u}}[\mathbf{s}]_\bullet A^E$, $F^{\mathbf{u}}[\mathbf{s}]_\bullet L$ and $F^{\mathbf{u}}[\mathbf{s}]_\bullet (A^E/L)$ are filtered $F^{\mathbf{u}}_\bullet A$-modules.
\end{lem}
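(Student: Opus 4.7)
The free algebra $\Tn$ carries the obvious $\ZZ$-grading $\Tn=\bigoplus_l \Tn_l^\mathbf{u}$ with $\Tn_l^\mathbf{u}\cdot \Tn_{l'}^\mathbf{u}\subseteq \Tn_{l+l'}^\mathbf{u}$ (regardless of any weight vector hypothesis); hence $F_\bullet^\mathbf{u}\Tn$ is a filtered $\KK$-algebra. For the subadditivity of $\deg_\mathbf{u}$ (which simultaneously delivers that $F_\bullet^\mathbf{u}A$ is a filtered algebra) I would pick, for $a,a'\in A$ with $\deg_\mathbf{u}(a)=k$ and $\deg_\mathbf{u}(a')=k'$, lifts $p\in F_k^\mathbf{u}\Tn$ and $p'\in F_{k'}^\mathbf{u}\Tn$ (the $\ZZ$-indexed infimum in \Cref{298}\eqref{298a} is attained because the filtration is discrete); then $pp'\in F_{k+k'}^\mathbf{u}\Tn$ is a lift of $aa'$, giving $\deg_\mathbf{u}(aa')\leq k+k'$.

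The bulk of the work lies in the identification $F_\bullet^\mathbf{u} A=\lideal{\K}{\{\ol{\ul x^\alpha}\mid \langle \mathbf{u},\alpha\rangle\leq \bullet\}}$. The inclusion $\supseteq$ is immediate since $\ul x^\alpha\in F_{\langle\mathbf{u},\alpha\rangle}^\mathbf{u}\Tn$. For $\subseteq$, let $a\in F_k^\mathbf{u}A$ and lift to $p\in F_k^\mathbf{u}\Tn$; expanding $p$ as a $\K$-linear combination of monomials $m=x_{i_1}\cdots x_{i_l}$ of $\mathbf{u}$-weight $\sum_j \mathbf{u}_{i_j}\leq k$, it suffices to show that each $\ol m$ lies in $\lideal{\K}{\{\ol{\ul x^\alpha}\mid \langle \mathbf{u},\alpha\rangle\leq k\}}$. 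I would prove this by noetherian induction on $\lm_\prec(m)$ with respect to the well-ordering $\prec$: if $m$ is already standard we are done; otherwise apply a single commutation reduction $\rho_{m_1,s_{i,j},m_2}$ for some factorization $m=m_1(x_jx_i)m_2$ with $i<j$. This replaces $\ol m$ by $c_{ij}\,\ol{m_1 x_ix_j m_2}+\ol{m_1 d_{ij}m_2}$. The first summand has strictly smaller leading monomial in $\prec$ by definition of a commutation system (\Cref{805}\eqref{805e}), and its $\mathbf{u}$-weight equals $\deg_\mathbf{u}(m)$; the second has $\mathbf{u}$-weight at most $\deg_\mathbf{u}(m_1)+\deg_\mathbf{u}(d_{ij})+\deg_\mathbf{u}(m_2)\leq \deg_\mathbf{u}(m_1)+\deg_\mathbf{u}(x_ix_j)+\deg_\mathbf{u}(m_2)=\deg_\mathbf{u}(m)$ by the weight vector hypothesis \Cref{298}\eqref{298b}. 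In either case the induction hypothesis applies.

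The module assertions are then formal. We have $F_\bullet^{\mathbf{u}}[\mathbf{s}]A^E=\bigoplus_e F_{\bullet-\mathbf{s}_e}^\mathbf{u}A(e)$, and multiplicativity on $A$ yields $F_k^\mathbf{u}A\cdot F_{k'-\mathbf{s}_e}^\mathbf{u}A(e)\subseteq F_{k+k'-\mathbf{s}_e}^\mathbf{u}A(e)$, so $A^E$ is a filtered $F_\bullet^\mathbf{u}A$-module; the induced sub- and quotient filtrations $F_\bullet^\mathbf{u}[\mathbf{s}]L$ and $F_\bullet^\mathbf{u}[\mathbf{s}](A^E/L)$ inherit this structure in the usual way.

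\textbf{Main obstacle.} The only nontrivial step is the inclusion $F_k^\mathbf{u}A\subseteq \lideal{\K}{\{\ol{\ul x^\alpha}\mid \langle \mathbf{u},\alpha\rangle\leq k\}}$, where one must show that the commutation reductions implementing $\Tn\twoheadrightarrow A$ never raise the $\mathbf{u}$-weight past $k$; this is precisely what the weight vector condition $\deg_\mathbf{u}(d_{ij})\leq \deg_\mathbf{u}(x_ix_j)$ guarantees. Note that reductions by elements of $I$ (which need not respect the $\mathbf{u}$-weight) are not required here, since arbitrary $\ul x^\alpha$ are allowed in the spanning set, not only those outside $\operatorname{L}_\prec(I)$.
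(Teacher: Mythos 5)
Your proposal is correct. Note that the paper states \Cref{300} without proof, so there is no argument to compare against; your write-up supplies the missing details and does so along the expected lines. The two ingredients you isolate are exactly the right ones: (i) the $\mathbf{u}$-grading on the free algebra $\Tn$ is multiplicative for any $\mathbf{u}\in\ZZ^n$, so $F^\mathbf{u}_\bullet\Tn$ is a filtered algebra and the quotient filtration $F^\mathbf{u}_\bullet A$ inherits subadditivity of $\deg_\mathbf{u}$ by lifting (your remark about the infimum being attained is harmless --- in the degenerate case $\deg_\mathbf{u}(a)=-\infty$ the inequality holds with the usual convention); and (ii) the spanning claim $F^\mathbf{u}_k A=\lideal{\K}{\{\ol{\ul x^\alpha}\mid\langle\mathbf{u},\alpha\rangle\leq k\}}$ follows by noetherian induction on the well-ordering $\prec$, using that a single commutation reduction replaces $x_jx_i$ by $c_{ij}x_ix_j+d_{ij}$ with strictly $\prec$-smaller monomials and, by the weight-vector condition $\deg_\mathbf{u}(d_{ij})\leq\deg_\mathbf{u}(x_ix_j)$, without increasing the $\mathbf{u}$-weight. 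This is in substance the $\mathbf{u}$-weighted refinement of \Cref{856}.\ref{856b} (with $\prec_o=\prec_\mathbf{u}$) that the paper invokes later, e.g.\ in the proof of \Cref{220}.\ref{220a}. Your closing observation that reductions by elements of $I$ are never needed --- because the spanning set consists of all standard monomials of bounded weight, not only the irreducible ones --- is also correct and worth making explicit.
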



 \begin{dfn}\label{349}
 Let $\mathbf{u}\in \ZZ^n$ be a weight vector on $A$, $E$ a finite set, $\prec^E$ an ordering on $A^E$ and $\mathbf{s}\in \ZZ^E$ a shift vector. We define the ordering $\prec_{\mathbf{u}[\mathbf{s}]}^E$ on $\SMon(\Tn^E)$ by
 \begin{align*}
 \ul x^\alpha (e) \prec_{\mathbf{u}[\mathbf{s}]}^E \ul x^{\alpha'} (e') \text{ if and only if } & \deg_{\mathbf{u}[\mathbf{s}]}(\ul x^\alpha (e))<\deg_{\mathbf{u}[\mathbf{s}]}(\ul x^{\alpha'} (e'))\\
 \text{ or }& \deg_{\mathbf{u}[\mathbf{s}]}(\ul x^\alpha (e))=\deg_{\mathbf{u}[\mathbf{s}]}(\ul x^{\alpha'} (e'))\text{ and } \ul x^\alpha (e) \prec^E \ul x^{\alpha'} (e')
 \end{align*}
 for $\alpha,\alpha'\in \NN^n$ and $e,e'\in E$.
 If $\mathbf{s}$ is the zero vector, we also write $\prec_{\mathbf{u}}^E$. We sometimes use the notation $\prec_{\mathbf{u}[\mathbf{s}]}^E$ without explicitly defining an ordering $\prec^E$ on $A^E$.
 \end{dfn}


\Cref{300} implies that $F^{\mathbf{u}}_0 A$ is a $\K$-subalgebra of $A$ if $\mathbf{u}$ is a weight vector on $A$.

\begin{lem}\label{220} Let $\mathbf{u}\in \ZZ^n$ be a weight vector on $A$. 
\begin{enumerate}

\item\label{220a} The $\K$-subalgebra $F^{\mathbf{u}}_0 A$ of $A$ is generated by residue classes of finitely many standard monomials.
 Moreover, such a generating set is computable. 

 \item \label{220e} The $\K$-subalgebra $F_0^\mathbf{u}A$ is isomorphic to a PBW-reduction-algebra.
 
 \item\label{220c} The $F^{\mathbf{u}}_0 A$-modules $F^{\mathbf{u}}_j A$ ($j\in \ZZ$) are 
 generated by residue classes of finitely many standard monomials.
 Moreover, such generating sets are computable. 

 \end{enumerate}
\end{lem}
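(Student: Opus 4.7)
The plan is to handle all three parts together by reducing them to a common combinatorial fact about the sublevel sets $T_j := \{\alpha \in \NN^n \mid \langle \mathbf{u}, \alpha\rangle \leq j\}$. By Gordan's lemma, $T_0$ is a finitely generated submonoid of $\NN^n$ with a computable Hilbert basis $\beta_1, \dots, \beta_k$; applying Gordan's lemma to $\{(\alpha, t) \in \NN^{n+1} \mid \langle \mathbf{u}, \alpha \rangle \leq t\}$ and slicing at $t = j$ yields a computable finite $G_j \subseteq T_j$ with $T_j = T_0 + G_j$ for each $j \in \ZZ$.

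For part \ref{220a} I would show that $\{\ol{\ul x^{\beta_j}}\}_{j=1}^{k}$ generates $F_0^\mathbf{u} A$ as a $\K$-algebra. By \Cref{n1} combined with \Cref{300}, the $\K$-space $F_0^\mathbf{u} A$ has basis $\{\ol{\ul x^\alpha} \mid \alpha \in T_0 \setminus \operatorname{L}_\prec(I)\}$, so it suffices to express each such $\ol{\ul x^\alpha}$ in terms of the $\ol{\ul x^{\beta_j}}$. This proceeds by transfinite induction on the well-ordering $\prec$: writing $\alpha = \sum_j c_j \beta_j$ and forming $p := \prod_j \ol{\ul x^{\beta_j}}^{c_j} \in A$, \Cref{856}.\ref{856b} yields $p = f \ol{\ul x^\alpha} + r$ with $f \in \K^*$ and $r$ a $\K$-linear combination of $\ol{\ul x^\gamma}$ satisfying $\gamma \prec \alpha$; each such $\gamma$ lies in $T_0$ by the weight-vector hypothesis, so the inductive hypothesis puts $r$, and hence $\ol{\ul x^\alpha} = f^{-1}(p - r)$, in the sought subalgebra. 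Part \ref{220c} then follows by the same scheme with $\alpha = \delta + \gamma$ for $\delta \in G_j$ and $\gamma \in T_0$, using the product $\ol{\ul x^\delta} \cdot \prod_l \ol{\ul x^{\beta_l}}^{c_l}$ to conclude that $\{\ol{\ul x^\delta} \mid \delta \in G_j\}$ generates $F_j^\mathbf{u} A$ as an $F_0^\mathbf{u} A$-module.

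For part \ref{220e} I would pass via \Cref{841} to a PBW-reduction datum. Part \ref{220a} supplies a surjection $\phi \colon \K\langle y_1, \dots, y_k \rangle \twoheadrightarrow F_0^\mathbf{u} A$ with $\phi(y_j) = \ol{\ul x^{\beta_j}}$, and for each $i < j$ the inductive procedure yields an explicit polynomial $f_{ij}(\ul y)$ with $y_j y_i - f_{ij}(\ul y) \in \ker\phi$. Equipping $\K\langle \ul y \rangle$ with the well-ordering $\tilde\prec$ obtained by pulling $\prec$ back along the monoid map $\ul y^{\mathbf{c}} \mapsto \ul x^{\sum_l c_l \beta_l}$ and breaking ties lexicographically on $\NN^k$, the relation $y_j y_i - f_{ij}(\ul y)$ takes the commutation form $y_j y_i - c_{ij} y_i y_j - d_{ij}$ with $\lm_{\tilde\prec}(d_{ij}) \tilde\prec y_i y_j$, by tracking the leading terms of $\ol{\ul x^{\beta_j}} \cdot \ol{\ul x^{\beta_i}}$ through $\phi$ via \Cref{856}.\ref{856b}. \Cref{841} then produces the finite $\tilde I \subseteq \K[\ul y]$ that completes the PBW-reduction datum.

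The principal obstacle is the construction of $\tilde\prec$ in part \ref{220e}: when the monoid surjection $\NN^k \to T_0$ fails to be injective, the pullback of $\prec$ is only a preorder, and one must verify that the lexicographic refinement yields a genuine monoid well-ordering on $\K[\ul y]$ that is compatible with the new commutation relations in the sense of \Cref{805}.\ref{805e}. Computability claims in all three parts reduce to standard Hilbert-basis routines for affine monoids together with the normal-form procedures developed in \Cref{288}.
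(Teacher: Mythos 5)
Your proposal follows essentially the same route as the paper: Gordan's lemma for the sublevel monoids, induction on the well-ordering $\prec$ via \Cref{856}.\ref{856b} for parts (1) and (3), and for part (2) the surjection from a free algebra on the Hilbert basis of $T_0$ equipped with the pulled-back (tie-broken) ordering, concluded by \Cref{841}. The only point to patch is your derivation of the sets $G_j$: slicing $\{(\alpha,t)\in \NN^{n+1}\mid \langle \mathbf{u},\alpha\rangle\leq t\}$ at $t=j$ only covers $j\geq 0$, so for $j<0$ one should instead slice $\{(\alpha,t)\in\NN^{n+1}\mid \langle \mathbf{u},\alpha\rangle + t\leq 0\}$ at $t=-j$ (the paper builds its sets $V_j$ explicitly by hand, but your higher-dimensional Gordan argument is equally valid once the sign is fixed).
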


\begin{proof}\
\begin{enumerate}

\item Taking exponents $\SMon(\Tn)\cap F^{\mathbf{u}}_0 \Tn $ identifies with
\begin{equation*}
U_0:=\{\alpha\in \RR^n\mid \langle \mathbf{u},\alpha\rangle \leq 0\}\cap \NN^n,
\end{equation*}
which is an intersection of a rational cone and the lattice $\ZZ^n$.
Therefore $U_0$ is a positive affine monoid by Gordan's lemma (see e.g. \cite[Lemma 2.9]{Bruns}), and has a computable minimal finite generating set \cite[Proposition 3.4.6]{Koch} \citep{BrunsComp}, say $\alpha_1,\dots,\alpha_s\in \ZZ^n$. This means that
$
U_0=\sum_{1\leq i\leq s} \NN\cdot \alpha_i,
$
and if $\alpha_i=\beta_1+\beta_2$ with $\beta_1,\beta_2\in U_0$, then $\beta_1=\alpha_i$ or $\beta_2=\alpha_i$ for $1\leq i\leq s$.

We claim that $F^{\mathbf{u}}_0 A=\K[\ol{\ul x^{\alpha_1}},\dots, \ol{\ul x^{\alpha_s}}]$.
By \Cref{300}, it suffices to show that ${F^{\mathbf{u}}_0 \Tn\cap {\SMon(\Tn)}}$ maps to $\K[\ol{\ul x^{\alpha_1}},\dots, \ol{\ul x^{\alpha_s}}]$. We proceed by induction on the well-ordering $\prec$. 
The base case is immediate since $1=\min_{\prec}\{ F^{\mathbf{u}}_0 \Tn\cap \SMon(\Tn) \}$. Consider now $\ul x^\alpha\in F^{\mathbf{u}}_0 \Tn\cap \SMon(\Tn)$. Then $\alpha\in U_0$, and hence $\alpha=\sum_{1\leq i\leq s}l_i\alpha_i$ for some $l\in \NN^s$. By \Cref{856}\eqref{856b} there exists $c\in \KK^*$ and $a\in \lideal{\K}{\SMon(\Tn)}$ with $\lm(a)\prec \ul x^\alpha$ such that
\[
\ol{\ul x^\alpha}=\ol{\ul x^{\sum_{1\leq i\leq s}l_i\alpha_i}}=c\ol{(\ul x^{\alpha_1})^{l_1}\cdots (\ul x^{\alpha_s})^{l_s}}+\ol{a}.
\]
The induction hypothesis applied to
 $\ol a\in F^{\mathbf{u}}_0 A$ yields the claim. 

\item We retain the notation of Part~\eqref{220a}. Consider the surjective $\K$-algebra map 
\[
 \pi: \KK\langle \ul y\rangle:=\KK\langle y_1,\dots,y_s\rangle \to F_0^\mathbf{u}A,\; y_i\mapsto \ol{\ul x^{\alpha_i}}.
\]
By \Cref{856}\eqref{856b} with $\prec_o^E=\prec_\mathbf{u}$ there exist $f_{ij}\in \K^*$ and $g_{ij}\in \Pn$ with $\lE_{\prec}(g_{ij})\prec \alpha_i+\alpha_j$ and $\deg_\mathbf{u}(g_{ij})\leq \deg_\mathbf{u}(\ul x^{\alpha_i}\ul x^{\alpha_j})\leq 0$ such that
\[
 \ul x^{\alpha_j}\ul x^{\alpha_i}-f_{ij}\ul x^{\alpha_i}\ul x^{\alpha_j}-g_{ij}\in \lrideal{\Tn}{S}\subseteq \lrideal{\Tn}{S,I}
\]
for $1\leq i<j\leq s$.
Then $g_{ij}\in F_0^\mathbf{u} A$ and Part~\eqref{220a} yield $g_{ij}'(y_1,\dots,y_s)\in \K[ \ul y]$ such that $g'_{ij}(\ol{\ul x^{\alpha_1}},\dots, \ol{\ul x^{\alpha_s}})=\ol{g_{ij}}\in A$.
It follows
\[
 S_0:=\{y_jy_i-f_{ij}y_iy_j-g_{ij}'\mid 1\leq i<j\leq s\}\subseteq \ker(\pi).
\]
Define the well-ordering $\prec_0$ on $\SMon(\K\langle \ul y\rangle)$ by
\begin{align*}
 \ul y^\beta\prec_0 \ul y^\gamma \text{ if and only if } & \sum_{1\leq k\leq s}\beta_i \alpha_i\prec \sum_{1\leq k\leq s} \gamma_i\alpha_i \\
 \text{ or } &\sum_{1\leq k\leq s}\beta_i \alpha_i=\sum_{1\leq k\leq s} \gamma_i\alpha_i \text{ and } \ul y^\beta\prec' \ul y^\gamma,
\end{align*}
where $\beta,\gamma\in \NN^s$ and $\prec'$ is some well-ordering on $\SMon(\K\langle \ul y\rangle)$. By construction, $(S_0,\prec_0)$ is a commutation system. We conclude that $\K\langle \ul y\rangle /\ker \pi$ is a PBW-reduction-algebra isomorphic to $F_0^\mathbf{u} A$ by \Cref{841}.
 
\item We keep the notation of Part~\eqref{220a} and consider first the case $j<0$. Let $\Delta_j:=\{\alpha_i\mid \langle \mathbf{u},\alpha_i\rangle \leq j\} $ and $\Delta'_j:=\{\alpha_i\mid j<\langle \mathbf{u}, \alpha_i\rangle<0\}$.
One easily checks that 
\begin{equation}\label{d1}
 U_j:=\{\alpha\in \NN^n\mid \langle \mathbf{u},\alpha\rangle\leq j\}=U_0+V_j:=\{\alpha+v\mid \alpha\in U_0,v\in V_j\},
 \end{equation}
 where
 $V_j:=\Delta_j\cup (U_j\cap
 \{\sum_{\delta\in \Delta'} l_\delta \delta\mid l\in \NN^{\Delta'}, |l|\leq j\})$.
We claim that 
\begin{equation}\label{d2}
F^\mathbf{u}_j A=\sum_{v\in V_j} F_0^\mathbf{u}A\ol{\ul x^v}.
\end{equation}
By \Cref{300} it suffices to show that $ F^\mathbf{u}_j \Tn\cap \SMon(\Tn)$ maps to the right hand side. This set has a minimal element $\ul x^\beta$ with respect to $\prec$ and we must have $\beta\in V_j$.
Proceeding by induction, the claim follows as in Part~\eqref{220a}.

The case $j=0$ being clear, we assume now $j>0$. Arguing as in the proof of Part~\eqref{220a}, we can compute a minimal finite set of generators $\Gamma$ of $\{\alpha\in \NN^n\mid \langle \mathbf{u},\alpha\rangle\geq 0\}$. As above, we obtain
\[
U_j:=\{\alpha\in \NN^n\mid \langle \mathbf{u},\alpha\rangle\leq j\}= U_0 +V_j,
\] 
where $V_j:=(\{\sum_{\gamma\in \Gamma_j} l_\gamma \gamma\mid l\in \NN^{\Gamma}, |l| \leq j\}\cap (U_j\setminus U_0))\cup \{0\}$ with 
 $\Gamma_j:=G\cap (U_j\setminus U_0)$. 
 With this notation \Cref{d2} follows as in case $j<0$.\qedhere
\end{enumerate}
\end{proof} 


\begin{ntn}\label{d3}
 Let $\mathbf{u}\in \ZZ^n$ be a weight vector on $A$.
 \begin{enumerate}

 \item\label{d3a} With notation from the proof of \Cref{220}.\ref{220a} we denote the set $G_A^{\mathbf{u}}:=\{{\ul x^{\alpha_1}},\dots,{\ul x^{\alpha_s}}\}$, whose residue classes generate $F_0^{\mathbf{u}}A$ as $\K$-algebra. 

 \item\label{d3b} With notation from the proof of \Cref{220}.\ref{220c} we denote for $j\in \ZZ$ the set $P_j^{A,\mathbf{u}}:=\{\ol{\ul x^v}\mid v\in V_j\}$, whose residue classes generate $F^{\mathbf{u}}_jA$ as $F^{\mathbf{u}}_0 A$-module.

 \end{enumerate}
\end{ntn}


\begin{rem}\label{345} Let $\mathbf{u}\in \ZZ^n$ be a weight vector on $A$.
The proof of \Cref{220}.\ref{220a} and \ref{220c} is constructive. It gives a method to represent an element of $F_0^{\mathbf{u}}A$ as a $\K$-linear combination of products of elements in $G_A^{\mathbf{u}}$ and elements $F_j^{\mathbf{u}}A$ as $F_0^{\mathbf{u}}A$-linear combinations of elements in $P_j^{A,\mathbf{u}}$.
\end{rem}


\begin{rem}\label{885}\
\begin{enumerate}
 \item\label{885a} Let $A=(\KK\langle \ul x,\ul y\rangle, S,I,\prec)$ (with $\KK\langle \ul x,\ul y\rangle:=\KK\langle x_1,\dots,x_n,y_1,\dots,y_m\rangle$) be an elementary PBW-reduction-algebra and $\mathbf{v}\in \ZZ^{n+m}$ be any weight vector on $A$.
Note that $\mathbf{w}=((0)_{1\leq i\leq n}, (1)_{1\leq i\leq m})$ is a weight vector on $A$. Then
\[
 F_k^\mathbf{v}A\cap F_l^\mathbf{w} A=\left(({F_k^\mathbf{v}\KK\langle \ul x,\ul y\rangle \cap F_l^\mathbf{w}\KK\langle \ul x,\ul y\rangle \cap \KK[ \ul x,\ul y] })+\ideal{I\cup S}\right)/\ideal{I\cup S}
\]
for all $k,l\in \ZZ$:
Indeed let $a\in F_k^\mathbf{v}A\cap F_l^\mathbf{w} A$. By \Cref{300} there exist representatives $a^{\mathbf{w}}\in F_l^\mathbf{w}\KK\langle \ul x,\ul y\rangle \cap \KK[ \ul x,\ul y]$ and $a^{\mathbf{v}}=\sum_{(\alpha,\beta)\in \NN^{n+m}}a^\mathbf{v}_{(\alpha,\beta)}\ul x^\alpha \ul y^\beta \in F_k^\mathbf{v}\KK\langle \ul x,\ul y\rangle$ of $a$.
By \Cref{923} $A= \bigoplus_{\beta\in \NN^m} (\KK[\ul x]/\ideal{I})\ul y^\beta$, 
and hence 
$\ol{\sum_{\alpha\in \NN^{n}} a^\mathbf{v}_{(\alpha,\beta)} \ul x^\alpha}=0\in A$ for all $\beta\in \NN^m$ with $|\beta|>l$. Thus $\sum_{(\alpha,\beta)\in \NN^{n+m}, |\beta|\leq l}a^\mathbf{v}_{(\alpha,\beta)}\ul x^\alpha \ul y^\beta\in F_k^\mathbf{v}\KK\langle \ul x,\ul y\rangle \cap F_l^\mathbf{w}\KK\langle \ul x,\ul y\rangle$ is also a representative of $a$. 

 \item\label{885b} Let $\mathbf{v}$ and $\mathbf{w}\in \ZZ^n$ be weight vectors on $A=(\Tn,S,I,\prec)$ such that
 \begin{equation}\label{886}
 F_k^\mathbf{v}A\cap F_l^\mathbf{w} A=\left( ({F_k^\mathbf{v}\Tn \cap F_l^\mathbf{w}\Tn \cap \Pn})+\ideal{I\cup S} \right)/\ideal{I\cup S}
\end{equation}
for $k,l\in \ZZ$.
By construction (see proof of \Cref{220}\eqref{220c})
\[
 F_\bullet^\mathbf{w} F_k^\mathbf{v} A=\sum_{p\in P_k^{A,\mathbf{v}}} (F_{\bullet-\langle \beta_i^k,\mathbf{w}\rangle}^\mathbf{w} F_0^\mathbf{v} A)\cdot \ol{p}.
\]

\end{enumerate}
\end{rem}


\begin{exa}\label{313}
In the situation of \Cref{577}.\ref{577c}, we have $T_X^V\cong F_0^\mathbf{v} T_X$, where $\mathbf{v}$ is the weight vector assigning weights $-1$ and $1$ to $x_n$ and $y_m$, respectively, and weight $0$ otherwise. 
By this example and \Cref{885} the weight vector $\mathbf{w}=((0)_{1\leq i\leq n},(1)_{1\leq i\leq m})$ on $T_X$ induces the weight vector $\mathbf{w}_\mathbf{v}=((0)_{1\leq i\leq n},(1)_{1\leq i\leq m})$ on $T_X^V$. 
Moreover, we may assume
\[
P_k^{T_X,\mathbf{v}}=\begin{cases} \{{x_{n}^k}\}&\text{if }k\leq 0,\\
\{{y_{m}^{l}}\mid 0\leq l\leq k\}& \text{otherwise}
\end{cases}
\]
and fix this choice.
\end{exa}

\subsection{Weight filtrations on submodules of free modules}\label{334}

In this \namecref{334} let $A=(\Tn,S,I,\prec)$ be a PBW-reduction-algebra unless otherwise specified and $\mathbf{u}\in \ZZ^n$ a weight vector on $A$.
For a given set $E$, an $A$-submodule $M\subseteq A^E$ and a shift vector $\mathbf{s}\in \ZZ^E$ we show how to compute a finite set of generators $M'$ of the filtration $F_\bullet^{\mathbf{u}}[\mathbf{s}] M$.
By definition this means that for every $m\in M$ there are coefficients $a\in A^{M'}$ such that
 \[
 m=\sum_{m'\in M'} a_{m'} m' \text{ and } \deg_\mathbf{u}(a_{m'})+\deg_{\mathbf{u}[\mathbf{s}]}(m')\leq \deg_{\mathbf{u}[\mathbf{s}]}(m) \text{ for all } m'\in M'.
\]

 
 \begin{lem}\label{844} 
Let $\mathbf{u}\in \ZZ^n$ be a weight vector on $A$, $E$ a finite set, $\mathbf{s}\in \ZZ^E$ a shift vector, $\prec^E$ an ordering and $M\subseteq A^E$ an $A$-submodule. If $G$ is a Gröbner basis of $M$ with respect to $\prec^E_{\mathbf{u}[\mathbf{s}]}$, then it generates $F^\mathbf{u}[\mathbf{s}]_\bullet M$ as $F_\bullet^\mathbf{u}A$-module.
 \end{lem}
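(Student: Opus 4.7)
The plan is to show that the standard representation guaranteed by the Gröbner basis property with respect to the weight-refined ordering $\prec^E_{\mathbf{u}[\mathbf{s}]}$ directly certifies membership in $F^{\mathbf{u}}[\mathbf{s}]_\bullet M$ as an $F^{\mathbf{u}}_\bullet A$-module element. Concretely, given $m \in F^{\mathbf{u}}[\mathbf{s}]_k M$, apply \Cref{293}\eqref{293a} to write $m = \sum_{g \in G} a_g g$ with
\[
\lE_{\prec^E_{\mathbf{u}[\mathbf{s}],\lcomp(g)}}(a_g) + \lec_{\prec^E_{\mathbf{u}[\mathbf{s}]}}(g) \preceq^E_{\mathbf{u}[\mathbf{s}]} \lec_{\prec^E_{\mathbf{u}[\mathbf{s}]}}(m)
\]
for all $g \in G$. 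I will translate each of these three extended leading exponents into $\mathbf{u}[\mathbf{s}]$-degrees and use that $\prec^E_{\mathbf{u}[\mathbf{s}]}$ refines this degree to conclude the required bound $\deg_{\mathbf{u}}(a_g) + \deg_{\mathbf{u}[\mathbf{s}]}(g) \leq \deg_{\mathbf{u}[\mathbf{s}]}(m)$.

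The first preliminary step is to verify that for any nonzero $h \in A^E$ one has $\deg_{\mathbf{u}[\mathbf{s}]}(h) = \deg_{\mathbf{u}[\mathbf{s}]}(\lm_{\prec^E_{\mathbf{u}[\mathbf{s}]}}(h))$. Indeed $\tau_{A^E,\prec^E_{\mathbf{u}[\mathbf{s}]}}(h)$ is the unique irreducible lift of $h$ in $\Pn^E$, so by the description of the filtration induced from $\SMon(\Tn^E)$ via \Cref{n1} (combined with \Cref{300}) the $\mathbf{u}[\mathbf{s}]$-degree of $h$ equals the maximal $\mathbf{u}[\mathbf{s}]$-degree of a standard monomial appearing in $\tau(h)$. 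Since $\prec^E_{\mathbf{u}[\mathbf{s}]}$ compares monomials first by $\mathbf{u}[\mathbf{s}]$-degree, this maximum is attained by $\lm_{\prec^E_{\mathbf{u}[\mathbf{s}]}}(h)$. An analogous argument for $a_g \in A$ and the ordering $\prec^E_{\mathbf{u}[\mathbf{s}],\lcomp(g)}$ yields $\deg_{\mathbf{u}}(a_g) = \langle \mathbf{u}, \lE_{\prec^E_{\mathbf{u}[\mathbf{s}],\lcomp(g)}}(a_g)\rangle$.

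With these identifications in place, write $(\beta_g,e_g) := \lec_{\prec^E_{\mathbf{u}[\mathbf{s}]}}(g)$, $\alpha_g := \lE_{\prec^E_{\mathbf{u}[\mathbf{s}],\lcomp(g)}}(a_g)$ and $(\gamma, e_m) := \lec_{\prec^E_{\mathbf{u}[\mathbf{s}]}}(m)$. The standard representation inequality, by the definition of $\prec^E_{\mathbf{u}[\mathbf{s}]}$ in \Cref{349}, forces
\[
\langle \mathbf{u}, \alpha_g + \beta_g \rangle + \mathbf{s}_{e_g} \leq \langle \mathbf{u}, \gamma \rangle + \mathbf{s}_{e_m}
\]
(either because the strict degree inequality holds, or because degrees agree and the tie-breaker $\prec^E$ decides). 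Rearranging and applying the degree identities from the previous step gives exactly
\[
\deg_{\mathbf{u}}(a_g) + \deg_{\mathbf{u}[\mathbf{s}]}(g) \leq \deg_{\mathbf{u}[\mathbf{s}]}(m),
\]
which is the required bound. Thus $G$ generates $F^{\mathbf{u}}[\mathbf{s}]_\bullet M$ as an $F^{\mathbf{u}}_\bullet A$-module.

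The only genuinely non-routine point is the first preliminary step: establishing that the $\mathbf{u}[\mathbf{s}]$-degree of an element of $A^E$ is detected by the leading monomial of its irreducible lift. This is where one must invoke that $\mathbf{u}$ is a weight vector (so relations in $\langle I \cup S\rangle$ are degree-compatible) together with the explicit basis description of $F^{\mathbf{u}}_\bullet A$ in \Cref{300}; once that is secured the rest is bookkeeping.
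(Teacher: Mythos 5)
Your overall strategy — turn the leading-exponent inequality of a standard representation into a degree inequality, using that $\prec^E_{\mathbf{u}[\mathbf{s}]}$ refines $\mathbf{u}[\mathbf{s}]$-degree — is the same as the paper's, and the "bookkeeping" part is fine. The gap is in your first preliminary step, and it is exactly where you flag the argument as non-routine. You invoke \Cref{293}\eqref{293a} and the map $\tau_{A^E,\prec^E_{\mathbf{u}[\mathbf{s}]}}$, calling $\tau_{A^E,\prec^E_{\mathbf{u}[\mathbf{s}]}}(h)$ "the unique irreducible lift". But $\tau$ and the leading data of elements of $A^E$ are only defined (via \Cref{292}\eqref{292c} and \Cref{807}\eqref{807a}) when the ordering is a well-ordering: for a non-well-ordering, reduction sequences need not terminate and irreducible reductions need not be unique. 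Since $\prec^E_{\mathbf{u}[\mathbf{s}]}$ is a non-well-ordering whenever $\mathbf{u}\notin\NN^n$ — which is precisely the case of interest, e.g.\ the $V$-filtration weight in \Cref{14} — neither $\lec_{\prec^E_{\mathbf{u}[\mathbf{s}]}}(m)$ nor your identity $\deg_{\mathbf{u}[\mathbf{s}]}(h)=\deg_{\mathbf{u}[\mathbf{s}]}(\lm_{\prec^E_{\mathbf{u}[\mathbf{s}]}}(h))$ makes sense as stated. This is exactly the obstruction that \Cref{846} is designed to circumvent: for non-well-orderings a Gröbner basis is \emph{defined} in terms of fixed polynomial lifts $h_g\in\Pn^E$ of the $g$'s and polynomial representatives $t$ of elements of $M$, and the lemma's hypothesis must be read in that sense.

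The repair is to work on the level of representatives throughout, which also makes your hardest step unnecessary. Given $m\in F^{\mathbf{u}}[\mathbf{s}]_k M$, choose (by \Cref{300} and the definition of the quotient filtration) a representative $m'\in F^{\mathbf{u}}[\mathbf{s}]_k\Tn\cap\Pn$, and apply the \Cref{846}-standard representation to $m'$: $m=\sum_g \ol{a_g}\,g$ with $\lE(a_g)+\lec(h_g)\preceq^E_{\mathbf{u}[\mathbf{s}]}\lec(m')$ for honest polynomials $a_g$, $h_g$, $m'$. For polynomials the $\mathbf{u}[\mathbf{s}]$-degree is attained at the $\prec^E_{\mathbf{u}[\mathbf{s}]}$-leading monomial by the very definition of the ordering, so the inequality gives $\deg_{\mathbf{u}}(a_g)+\deg_{\mathbf{u}[\mathbf{s}]}(h_g)\le\deg_{\mathbf{u}[\mathbf{s}]}(m')\le k$; one then only needs the trivial one-sided inequalities $\deg_{\mathbf{u}}(\ol{a_g})\le\deg_{\mathbf{u}}(a_g)$ and $\deg_{\mathbf{u}[\mathbf{s}]}(g)\le\deg_{\mathbf{u}[\mathbf{s}]}(h_g)$ (the degree of a residue class being an infimum over representatives), not the equality of degrees you try to establish for elements of $A^E$.
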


 \begin{proof}
 Let $m\in F^\mathbf{u}[\mathbf{s}]_k M$ for some $k\in \ZZ$. Choose a representative $m'\in F^\mathbf{u}[\mathbf{s}]_k \Tn\cap \Pn$ of $m$ using \Cref{300}. By assumption there is $a\in \lideal{\K}{\SMon(\Tn)}^G$ and $h\in (\Pn^E)^G$ with $\ol{h_g}=g$ satisfying
 \[
 m=\sum_{g\in G} \ol{a_g }g \text{ and } \lE(a_g)+\lec(h_g)\preceq^E_{\mathbf{u}[\mathbf{s}]}\lec(m')
 \]
implying $\deg_{\mathbf{u}}(\ol{a_g})+\deg_{\mathbf{u}[\mathbf{s}]}( g)\leq \deg_{\mathbf{u}}({a_g})+\deg_{\mathbf{u}[\mathbf{s}]}(h_g)\leq \deg_{\mathbf{u}}(m')\leq k$. This means that $m\in \sum_{g\in G} F^\mathbf{u}_{k-\deg_{\mathbf{u}[\mathbf{s}]}( g)} A\cdot g$. 
 \end{proof}

 
Note that if $\prec^E$ is a well-ordering, then $\prec_{\mathbf{u}[\mathbf{s}]}^E$ is a well-ordering if and only if $\mathbf{u}\in \NN^n$. Since Gröbner bases with respect to well-orderings exist by \Cref{294}, a finite set of generators of $F^\mathbf{u}_\bullet M$ exists in this case.
If $\mathbf{u}\notin \NN^n$, we can still compute Gröbner bases with respect to $\prec_{\mathbf{u}[\mathbf{s}]}^E$.
To this end we homogenize $A$ with respect to a weight vector $\mathbf{w}$:


\begin{dfn}\label{319}
 Let $\mathbf{w}\in \NN^n$ be a weight vector on $A$, $E$ a finite set and $\mathbf{s}\in \ZZ^E$ a shift vector.
\begin{enumerate}

 \item\label{319e} We define the \emph{$\mathbf{w}[\mathbf{s}]$-homogenization} of $0\neq p=\sum_{m\in \Mon(\Tn^E)} p_{m} m\in \Tn^E$ (with $p_m\in \KK$) as
 \[
 h_{\mathbf{w}[\mathbf{s}]}(p):=\sum_{m\in \Mon(\Tn^E)} p_{m} h^{\deg_{\mathbf{w}[\mathbf{s}]}(p)-\deg_{\mathbf{w}[\mathbf{s}]}(m)} m \in \KK\langle h,\ul x\rangle^E
 \]
 and set $ h_{\mathbf{w}[\mathbf{s}]}(0)=0$.
 For $G\subseteq \Tn^E$, we set $h_{\mathbf{w}[\mathbf{s}]}(G):=\{h_{\mathbf{w}[\mathbf{s}]}(g)\mid g\in G\}$. We suppress $\mathbf{s}$ if it is the zero vector.

 \item\label{319a} The \emph{$\mathbf{w}$-homogenized} algebra associated with $A$ is the $(1,\mathbf{w})$-graded algebra
 \[
 A^{\mathbf{w}}=\KK\langle h,\ul x\rangle/\ideal{h_\mathbf{w}(\lrideal{\Tn}{S\cup I})\cup \{hx_i-x_ih\mid 1\leq i\leq n\}}.
 \]

 \item\label{319c} We define the ordering $(\prec^E)^\mathbf{w}$ on $\SMon(\KK\langle h,\ul x\rangle^E)$ for the ordering $\prec^E$ on $A^E$ by
 \begin{align*}
 h^\alpha \ul x^\beta (e)(\prec^E)^\mathbf{w} h^{\alpha'} \ul x^{\beta'} (e')\text{ if and only if }& \alpha+\langle \mathbf{w},\beta \rangle <{\alpha'}+\langle \mathbf{w},{\beta'} \rangle\\
 \text{ or }& \alpha+\langle \mathbf{w},\beta \rangle ={\alpha'}+\langle \mathbf{w},{\beta'} \rangle \text{ and }\ul x^\beta (e) \prec^E \ul x^{\beta'} (e')
 \end{align*}
 for $\alpha,\alpha'\in \NN$, $\beta,\beta'\in \NN^n$ and $e,e'\in E$.

\item\label{319d} We call the $\K$-algebra homomorphism given by
\[
 d_h:\KK\langle h,\ul x\rangle\to \Tn,\;h\mapsto 1, x_i\mapsto x_i
\]
\emph{dehomogenization map}. It induces a map $d_h: A^{\mathbf{w}}\to A$. We denote also the maps $ \bigoplus_{e\in E} d_h$ 
 by $d_h$.
 
 \end{enumerate}
\end{dfn}


Homogenized PBW-reduction-algebras are PBW-reduction-al\-ge\-bras:


\begin{lem}\label{845}
 Let $\mathbf{w}\in \NN^n$ be a weight vector on $A$ and set
 $S^{\mathbf{w}}:=h_\mathbf{w}(S)\cup \{hx_i-x_ih\mid 1\leq i\leq n\}$. Then $A^\mathbf{w}=(\K\langle h, \ul x\rangle, S^{\mathbf{w}}, I^\mathbf{w},\prec^{\mathbf{w}}) $ for some set $I^\mathbf{w}\subseteq \allowbreak\K[h,\ul x]$ with $(1,\mathbf{w})$-homogeneous elements. 
In particular, if $A$ is a PBW-algebra, then so is $A^{\mathbf{w}}$.
 
 Moreover, if $\prec'$ is any ordering on $A$, then $(\prec')^{\mathbf{w}}$ is an ordering on $A^{\mathbf{w}}$. If $\mathbf{w}$ is strictly positive, then there exists a finite set ${I'}^\mathbf{w}$ consisting of $(1,\mathbf{w})$-homogeneous elements such that $(\K\langle h, \ul x\rangle, S^{\mathbf{w}}, {I'}^\mathbf{w},(\prec')^{\mathbf{w}})$ is a PBW-reduction datum.
\end{lem}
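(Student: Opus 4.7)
I establish each claim in sequence by combining direct checks with \Cref{841}, which produces a PBW-reduction datum from a two-sided ideal containing a commutation system. For the first assertion, I verify that $(S^{\mathbf{w}}, \prec^{\mathbf{w}})$ is a commutation system on $\K\langle h, \ul x\rangle$: the generators $hx_i - x_i h$ and the homogenizations $x_j x_i - c_{ij} x_i x_j - \tilde d_{ij}$ of the relations in $S$ have the form of \Cref{805}.\ref{805e} under $\prec^{\mathbf{w}}$ because $\prec^{\mathbf{w}}$ orders primarily by $(1,\mathbf{w})$-degree and tiebreaks by $\prec$ on the $\ul x$-part, so the leading-monomial inequalities for $\tilde d_{ij}$ follow from those for $d_{ij}$. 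Reductions to $\K[h, \ul x]$ terminate since $\prec^{\mathbf{w}}$ is a well-ordering: $\mathbf{w} \ge 0$ makes weighted degrees non-negative and the well-ordering $\prec$ handles ties. Then \Cref{841}, applied to the defining ideal $R = \ideal{h_{\mathbf{w}}(\lrideal{\Tn}{S \cup I}) \cup \{hx_i - x_i h\}}$ of $A^{\mathbf{w}}$, yields a finite $I^{\mathbf{w}} \subseteq \K[h, \ul x]$ giving a PBW-reduction datum. Since $R$ is $(1,\mathbf{w})$-homogeneous and $\prec^{\mathbf{w}}$ respects the $(1,\mathbf{w})$-grading, each representative $r_{\alpha'}$ chosen in the proof of \Cref{841} may be replaced by its top $(1,\mathbf{w})$-homogeneous component without altering its leading exponent, so $I^{\mathbf{w}}$ can be taken $(1,\mathbf{w})$-homogeneous.

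For the \emph{in particular} statement, \Cref{1000} allows me to assume $I = \{0\}$, and I need $I^{\mathbf{w}} = \{0\}$, i.e., $\ideal{S^{\mathbf{w}}} \cap \K[h, \ul x] = 0$. My plan is to identify $A^{\mathbf{w}}$ with the Rees algebra $R_{\mathbf{w}}(A) := \bigoplus_{k \ge 0} F_k^{\mathbf{w}} A \cdot t^k \subseteq A[t]$ of the $\mathbf{w}$-filtered PBW-algebra $A$, where $t$ is a central indeterminate. A surjective graded homomorphism $A^{\mathbf{w}} \twoheadrightarrow R_{\mathbf{w}}(A)$ is given on generators by $h \mapsto t$ and $x_i \mapsto \ol{x_i} \cdot t^{\mathbf{w}_i}$, and the homogenized commutation relations hold automatically in $R_{\mathbf{w}}(A)$ because each original relation lives in the appropriate filtration step (using \Cref{300}). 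The PBW property of $A$ furnishes the $\K$-basis $\{\ol{\ul x^\beta} \mid \langle \mathbf{w}, \beta\rangle \le k\}$ of $F_k^{\mathbf{w}} A$, so the standard monomials $\{h^\alpha \ul x^\beta\}$ of $A^{\mathbf{w}}$ map bijectively to a basis of $R_{\mathbf{w}}(A)$. This forces the map to be an isomorphism and the standard monomials to be $\K$-linearly independent in $A^{\mathbf{w}}$, whence \Cref{n1} delivers the claim.

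Finally, compatibility of $(\prec')^{\mathbf{w}}$ with $(S^{\mathbf{w}}, \prec^{\mathbf{w}})$ amounts to showing that $(S^{\mathbf{w}}, (\prec')^{\mathbf{w}})$ is itself a commutation system. The leading-monomial inequalities transfer from $(S, \prec')$ exactly as in the first part, and the reduction property for $(S^{\mathbf{w}}, (\prec')^{\mathbf{w}})$ reduces to that of $(S, \prec')$ by first commuting all $h$'s to the left and then applying the homogenized commutation relations on the $\ul x$-part. When $\mathbf{w}$ is strictly positive, the weighted degree of $h^\alpha \ul x^\beta(e)$ vanishes only at the unit monomial and each weighted-degree level contains only finitely many monomials, so $(\prec')^{\mathbf{w}}$ is automatically a well-ordering on $\SMon(\K\langle h, \ul x\rangle^E)$ regardless of $\prec'$; a second application of \Cref{841} with the same homogeneity observation then produces ${I'}^{\mathbf{w}}$. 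I expect the main obstacle to be the PBW-algebra case, where one must verify not only that the homogenized relations hold in $A^{\mathbf{w}}$ but also that no additional relations are forced; identifying $A^{\mathbf{w}}$ with the Rees algebra neatly circumvents a direct Bergman-type diamond check by reducing this to the known PBW basis of $A$.
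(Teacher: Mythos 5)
Your proof is correct, and for the first claim and the final two claims it follows the paper's route exactly: check that $(S^{\mathbf{w}},\prec^{\mathbf{w}})$ (resp.\ $(S^{\mathbf{w}},(\prec')^{\mathbf{w}})$) is a commutation system using that homogenization only multiplies $d_{ij}$ by a power of $h$, invoke \Cref{841}, and use the $(1,\mathbf{w})$-gradedness of the defining ideal to homogenize $I^{\mathbf{w}}$ (the paper keeps all homogeneous parts of each element, you keep only the top one; both preserve membership in the graded ideal and the leading exponents, so both satisfy \eqref{808b}). The genuine divergence is in the \emph{in particular} claim. The paper dispatches it in one line by citing \Cref{1000} and \Cref{808}, which implicitly asserts that $\ideal{h_{\mathbf{w}}(\lrideal{\Tn}{S})\cup\{hx_i-x_ih\}}\cap\K[h,\ul x]=\{0\}$ when $I=\{0\}$; you instead prove this by identifying $A^{\mathbf{w}}$ with the Rees algebra $\bigoplus_{k\ge 0}F^{\mathbf{w}}_kA\cdot t^k$, so that the standard monomials $h^\alpha\ul x^\beta$, which span $A^{\mathbf{w}}$ by the commutation system, map to the linearly independent elements $\ol{\ul x^\beta}t^{\alpha+\langle\mathbf{w},\beta\rangle}$ and hence form a $\K$-basis. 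This is more work but actually substantiates the step the paper leaves implicit (the same mechanism reappears in the paper's proof of \Cref{875}, where injectivity of the homogenized presentation is verified by hand). Two small points to keep in mind: for well-definedness of your map you must check that $h_{\mathbf{w}}(p)\mapsto \ol p\,t^{\deg_{\mathbf{w}}(p)}=0$ for \emph{every} $p$ in the two-sided ideal, not only for the generators in $S$, since \Cref{319}.\ref{319a} homogenizes the whole ideal; and surjectivity onto the Rees algebra uses the description $F^{\mathbf{w}}_kA=\lideal{\K}{\{\ol{\ul x^\beta}\mid\langle\mathbf{w},\beta\rangle\le k\}}$ from \Cref{300}. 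Both are covered by what you wrote, read charitably.
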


\begin{proof} Write $S:=\{x_jx_i-c_{ij}x_ix_j-d_{ij}\mid 1\leq i<j\leq n\}$. Since $\mathbf{w}$ is a weight vector on $A$, we have $h_\mathbf{w}(x_jx_i-c_{ij}x_ix_j-d_{ij})=x_jx_i-c_{ij}x_ix_j-h^{\alpha_{ij}} h_\mathbf{w}(d_{ij})$ for $1\leq i<j\leq n $ and some $\alpha_{ij}\in \NN$.
Then $(S^{\mathbf{w}},\prec^{\mathbf{w}})$ is a commutation system by definition of $\prec^{\mathbf{w}}$ and $A^\mathbf{w}=(\K\langle h, \ul x\rangle, S^{\mathbf{w}}, I^\mathbf{w}, \prec^{\mathbf{w}})$ is a PBW-reduction-algebra by \Cref{841}.
Using that
$A^\mathbf{w}$ is $(1,\mathbf{w})$-graded, we replace $I^\mathbf{w}$ by the set of the $(1,\mathbf{w})$-homogeneous parts of its elements. The particular claim follows now from \Cref{1000} and \Cref{808}.

As above, $(S^{\mathbf{w}},(\prec')^{\mathbf{w}})$ is a commutation system. If $\mathbf{w}$ is strictly positive, then $(\prec')^{\mathbf{w}}$ is a well-ordering and \Cref{841} yields the corresponding PBW-reduction datum.
\end{proof}


Our approach is to homogenize $A$ by a strictly positive weight vector $\mathbf{w}\in \NN_{>0}^n$. This reduces Gröbner basis computations in $A^E$ with respect to the non-well-ordering $\prec^E$ to Gröbner basis computations in $(A^{\mathbf{w}})^E$ with respect to the well-ordering $(\prec^E)^{\mathbf{w}}$. The existence of such a weight vector is guaranteed by the following \namecref{394}:


\begin{lem}\label{394}
A weight vector $\mathbf{w}\in \NN_{>0}^n$ on $A$ exists and is effectively computable.
\end{lem}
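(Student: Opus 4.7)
The plan is to reformulate the existence of a weight vector $\mathbf{w}\in\NN_{>0}^n$ on $A=(\Tn,S,I,\prec)$ as feasibility of a finite system of linear inequalities. Writing $S=\{x_jx_i-c_{ij}x_ix_j-d_{ij}\mid 1\leq i<j\leq n\}$ and expanding $d_{ij}=\sum_\alpha (d_{ij})_\alpha\ul x^\alpha$, the defining condition $\deg_\mathbf{w}(d_{ij})\leq \deg_\mathbf{w}(x_ix_j)$ is equivalent to the finite collection of inequalities $\langle\mathbf{w},\alpha-e_i-e_j\rangle\leq 0$ indexed by the $(i,j,\alpha)$ with $(d_{ij})_\alpha\neq 0$, while membership in $\NN_{>0}^n$ is enforced by $w_k\geq 1$. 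Hence the task splits into proving feasibility and then computing a rational solution, which we scale by a common denominator to an integer one.

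For existence, I would exploit the well-ordering property of $\prec$ via Robbiano's structure theorem for monomial well-orderings on $\NN^n$: the restriction of $\prec$ to $\SMon(\Tn)\cong\NN^n$ is represented by a real matrix $U=(\mathbf{u}_1,\dots,\mathbf{u}_r)^T$ via $\alpha\prec\beta$ iff $U\alpha<_{\mathrm{lex}}U\beta$, and the well-ordering property forces the first nonzero entry of each column of $U$ to be strictly positive (this reflects $1\prec x_i$ for every~$i$). Since the commutation system requires $\lm_\prec(d_{ij})\prec x_ix_j$, each $\alpha$ in the support of $d_{ij}$ satisfies $U v_{i,j,\alpha}<_{\mathrm{lex}}0$, where $v_{i,j,\alpha}:=\alpha-e_i-e_j$.

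Setting $\mathbf{w}:=\sum_{k=1}^r N^{r-k}\mathbf{u}_k$ for sufficiently large $N$, the first index $k$ at which $\langle\mathbf{u}_k,v_{i,j,\alpha}\rangle<0$ dominates the sum, so $\langle\mathbf{w},v_{i,j,\alpha}\rangle<0$; by the column-positivity each coordinate $w_i$ is likewise strictly positive. Since the conditions $\langle\mathbf{w},v_{i,j,\alpha}\rangle<0$ and $w_i>0$ are open on $\RR^n$, a rational approximation of the $\mathbf{u}_k$ preserves them, and clearing denominators yields $\mathbf{w}\in\NN_{>0}^n$ as required. Effectivity then follows from any rational linear programming algorithm applied to the explicit finite system $\{\langle\mathbf{w},\alpha-e_i-e_j\rangle\leq 0\}\cup\{w_k\geq 1\}$, whose rational output is scaled by the common denominator of its entries.

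The main obstacle is the existence half, where the qualitative well-ordering hypothesis must be upgraded to a quantitative statement about real (hence rational) weights; Robbiano's theorem is the natural bridge. A self-contained alternative would amount to an inductive argument on the equality-dimension of the constraint cone, essentially reproving Robbiano. Once existence is in hand, effectivity reduces to a textbook linear programming exercise.
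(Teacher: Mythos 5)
Your proposal is correct and follows essentially the same route as the paper: the paper reduces the claim to realizing the finitely many strict comparisons $\ul x^\alpha\prec x_ix_j$ (for $\alpha$ in the supports of the $d_{ij}$) by a computable $\mathbf{w}\in\NN_{>0}^n$ and cites \cite[Lemma 1.2.11, Exercises 1.2.7 and 1.2.9]{SingularBook} for that fact, which is exactly the Robbiano-matrix-plus-perturbation argument you spell out. Your additional linear-programming remark for effectivity is a sound (if slightly more explicit) substitute for the computability part of the cited lemma.
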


\begin{proof}
 Consider the set $M$ of standard monomials appearing with nonzero coefficient in $S$. According to \cite[Lemma 1.2.11, Exercises 1.2.7 and 1.2.9]{SingularBook} there is a computable $\mathbf{w}\in \NN^n_{>0}$ such that
 \[
 \ul x^\alpha \prec \ul x^\beta\text{ if and only if } \langle \alpha,\mathbf{w}\rangle <\langle \beta,\mathbf{w}\rangle
 \]
for all $\ul x^\alpha,\ul x^\beta\in M$.
As $\prec$ is an ordering on $A$, $\mathbf{w}$ is a weight vector on $A$.
\end{proof}

If $A$ is an elementary PBW-reduction-algebra, we compute a PBW-reduction datum for the homogenized PBW-reduction-algebra $A^{\mathbf{w}}$ with respect to the weight vector $\mathbf{w}\in \NN_{>0}^n$ as follows:


\begin{lem}\label{875} 
Consider the $\K$-algebra $\KK\langle \ul x,\ul y\rangle:=\KK\langle x_1, \allowbreak \dots,\allowbreak x_n,\allowbreak y_1,\dots,y_m\rangle$, the elementary PBW-reduction-algebra $A=(\KK\langle \ul x,\ul y\rangle, S ,I,\prec)$ and the weight vector $\mathbf{w}\in \NN^{n+m}_{>0}$ on $A$.
Then $A^{\mathbf{w}}$ is an elementary PBW-reduction-algebra.
In addition, if $\prec'$ is an ordering on $A$ and $A= (\KK\langle \ul x,\ul y\rangle, S, I',\prec_{\mathbf{w}}') $, then
$$A^\mathbf{w}=(\K\langle h,\ul x,\ul y\rangle, S^{\mathbf{w}}, I^\mathbf{w}, (\prec')^{\mathbf{w}}),$$ 
where $I^\mathbf{w}$ is a Gröbner basis of $\ideal{h_\mathbf{w}(I')}\subseteq \KK[h,\ul x]$ with respect to the ordering induced by $(\prec')^{\mathbf{w}}$. So a PBW-reduction datum of $A^{\mathbf{w}}$ with respect to the ordering $(\prec')^{\mathbf{w}}$ is computable.
\end{lem}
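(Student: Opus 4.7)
By \Cref{845}, $A^{\mathbf{w}}$ is a PBW-reduction-algebra with commutation system $(S^{\mathbf{w}}, (\prec')^{\mathbf{w}})$, and $(\prec')^{\mathbf{w}}$ is a well-ordering because $\mathbf{w}$ is strictly positive. The plan is to apply \Cref{812} to $A^{\mathbf{w}}$, declaring $\{h, x_1, \dots, x_n\}$ the commuting polynomial variables and $\{y_1, \dots, y_m\}$ the remaining variables. The required structural condition on the commutation relations holds: since $d_{kl}, f_{ik} \in \K[\ul x]$ by hypothesis, their $\mathbf{w}$-homogenizations lie in $\K[h, \ul x]$, and the adjoined central relations $hx_i - x_i h$ and $hy_k - y_k h$ fit the required form trivially with zero right-hand side.

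The crux is the bijectivity of
\[
 \psi^{\mathbf{w}} : \bigoplus_{\beta \in \NN^m} \bigl(\K[h, \ul x]/\ideal{h_{\mathbf{w}}(I')}\bigr)\ul y^\beta \longrightarrow A^{\mathbf{w}}, \quad \ol{h^a \ul x^\alpha}\,\ul y^\beta \mapsto \ol{h^a \ul x^\alpha \ul y^\beta}.
\]
Surjectivity follows by using $S^{\mathbf{w}}$ to collect all $\ul y$'s on the right. For injectivity, let $f = \sum_\beta p_\beta(h, \ul x)\,\ul y^\beta$ map to $0$ in $A^{\mathbf{w}}$. Because the defining ideal of $A^{\mathbf{w}}$ is $(1, \mathbf{w})$-homogeneous, I reduce to the case that each $p_\beta$ is $(1, \mathbf{w})$-homogeneous, hence of the form $h^{k_\beta}\,h_{\mathbf{w}}(q_\beta)$ for some $q_\beta \in \K[\ul x]$. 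Applying the dehomogenization $d_h: A^{\mathbf{w}} \to A$ yields $\sum_\beta q_\beta\,\ul y^\beta = 0$ in $A$, so $q_\beta \in \ideal{I'}\subseteq \K[\ul x]$ for each $\beta$ by the elementarity of $A$. The hypothesis that $I'$ is a Gröbner basis of $\ideal{I'}$ with respect to the weight ordering $\prec'_{\mathbf{w}}$ then permits invoking the standard homogenization theorem for commutative Gröbner bases (\cite[Lemma~1.7.10, Proposition~1.7.11]{SingularBook}): $\ideal{h_{\mathbf{w}}(I')}$ coincides with the $\mathbf{w}$-homogenization of $\ideal{I'}$, hence $h_{\mathbf{w}}(q_\beta) \in \ideal{h_{\mathbf{w}}(I')}$ and therefore $p_\beta \in \ideal{h_{\mathbf{w}}(I')}$.

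With the hypotheses of \Cref{812} verified, $A^{\mathbf{w}}$ is an elementary PBW-reduction-algebra. The injectivity of $\psi^{\mathbf{w}}$ moreover yields that the two-sided defining ideal of $A^{\mathbf{w}}$ intersected with $\K[h, \ul x]$ equals $\ideal{h_{\mathbf{w}}(I')}$, so by \Cref{837} any commutative Gröbner basis $I^{\mathbf{w}}$ of this ideal with respect to the ordering on $\K[h, \ul x]$ induced by $(\prec')^{\mathbf{w}}$ furnishes the stated PBW-reduction datum $(\K\langle h, \ul x, \ul y\rangle, S^{\mathbf{w}}, I^{\mathbf{w}}, (\prec')^{\mathbf{w}})$. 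Since $I^{\mathbf{w}}$ is computable by Buchberger's algorithm from the explicit generators $h_{\mathbf{w}}(I')$, the datum is effectively computable. The main technical obstacle is the saturation step that lifts $q_\beta \in \ideal{I'}$ to $h_{\mathbf{w}}(q_\beta) \in \ideal{h_{\mathbf{w}}(I')}$; it is precisely here that the Gröbner basis hypothesis on $I'$ with respect to $\prec'_{\mathbf{w}}$ is essential, as without it the weight-homogenized generators would only generate a proper subideal of the homogenization.
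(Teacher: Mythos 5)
Your proof is correct and takes essentially the same route as the paper's: both establish the injectivity of $\psi^{\mathbf{w}}$ by reducing to the $(1,\mathbf{w})$-homogeneous case, dehomogenizing via the commuting square with $\psi$ to land in $\ideal{I'}\subseteq\K[\ul x]$, and then invoking the fact that a Gröbner basis with respect to $\prec'_{\mathbf{w}}$ homogenizes to generators of the homogenized ideal, before concluding with \Cref{812}. The only difference is cosmetic: you fold the Gröbner-basis/homogenization compatibility into the injectivity step, whereas the paper first proves injectivity with the fully homogenized ideal $\ideal{h_{\mathbf{w}}(I)}$ and identifies it with $\ideal{h_{\mathbf{w}}(I')}$ at the end.
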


\begin{proof}
By hypothesis there is a canonical isomorphism $\psi: \bigoplus_{\beta\in \NN^m} (\K[\ul x]/I)\ul y^\beta\to A$.
For the first claim we need to show that the $\K$-linear epimorphism
\[
 \psi^{\mathbf{w}}: \bigoplus_{\beta\in \NN^m} (\K[h,\ul x]/\ideal{h_\mathbf{w}(I)})\ul y^\beta\to A^{\mathbf{w}},\; \ol{h^c\ul x^\alpha} \ul y^\beta\mapsto \ol{h^c\ul x^\alpha \ul y^\beta}
\]
is injective: Consider $p=\sum_{c,\alpha,\beta}\ol{p_{c,\alpha,\beta}h^c\ul x^\alpha} \ul y^\beta\in \ker(\psi^{\mathbf{w}})$ (with $p_{c,\alpha,\beta}\in \K$). Because $\psi^\mathbf{w}$ is $(1,\mathbf{w})$-graded we may assume that $p_{c,\alpha,\beta}=0$ for $c+\langle (\alpha,\beta),\mathbf{w}\rangle\neq k$ for some fixed $k\in \ZZ$.
Define the $\K$-linear map
$$d_h': \bigoplus_{\beta\in \NN^m} (\K[h,\ul x]/\ideal{h_\mathbf{w}(I)})\ul y^\beta\to \bigoplus_{\beta\in \NN^m}(\K[\ul x]/I)\ul y^\beta,\quad \ol{h^c\ul x^\alpha}\ul y^\beta\mapsto \ol{\ul x^\alpha}\ul y^\beta.$$
 We see that
 $d_h\circ \psi^{\mathbf{w}}=\psi\circ d_h'$. So we obtain that $ \sum_{c,\alpha}p_{c,\alpha,\beta}\ul x^\alpha\in I$ for all $\beta\in \NN^m$. Since $\sum_{c,\alpha,\beta}{p_{c,\alpha,\beta}h^c\ul x^\alpha} \ul y^\beta$ and hence also $\sum_{c,\alpha}{p_{c,\alpha,\beta}h^c\ul x^\alpha}$ is $(1,\mathbf{w})$-homogeneous
\[
\sum_{c,\alpha}p_{c,\alpha,\beta}h^c\ul x^\alpha =h^zh_\mathbf{w}\left( \sum_{c,\alpha}p_{c,\alpha,\beta}\ul x^\alpha \right)\in \ideal{h_\mathbf{w}(I)}.
\]
for some $z\in \NN$.
 This implies $p=0$ and hence that $\psi^{\mathbf{w}}$ is injective as claimed According to \cite[Exercise 1.7.5]{SingularBook} we have $h_\mathbf{w}(\ideal{I})=\ideal{h_\mathbf{w}(I')}\subseteq \K[h,\ul x]$ since $I'$ is a Gröbner basis of $I$ with respect to $\prec_\mathbf{w}$. So the additional claim is immediate from \Cref{812}.
\end{proof}


We deduce from PBW-reduction data of $A^{\mathbf{w}}$ and $A$ a corresponding datum of the $(1,\mathbf{w})$-ho\-mo\-geniza\-tion of a given factor algebra of $A$ as explained below: 


\begin{lem}\label{876} 
Let $\mathbf{w}\in \NN^n_{>0}$ be a weight vector on $A$, 
$\prec'$ an ordering on $A$, $A^{\mathbf{w}}=(\K\langle h,\ul x\rangle ,S^{\mathbf{w}}, I^\mathbf{w},(\prec')^{\mathbf{w}})$ and $B=A/M$ a factor PBW-reduction-algebra. 
Suppose $G$ is a Gröbner basis of $M$ with respect to $\prec_\mathbf{w}$ and $G^\mathbf{w}$ is a Gröbner basis of the left $A^{\mathbf{w}}$-ideal generated by the residue classes of the elements in $h_\mathbf{w}(\tau_{A,\prec_\mathbf{w}}(G))$ with respect to $(\prec')^{\mathbf{w}}$.
Then $\mathbf{w}$ is a weight vector on $B$ and 
\[
B^{\mathbf{w}}=(\K\langle h,\ul x\rangle ,S^{\mathbf{w}},\tau_{A^{\mathbf{w}},(\prec')^{\mathbf{w}}}(G^\mathbf{w})\cup I^\mathbf{w},(\prec')^{\mathbf{w}}).
\]
In particular, PBW-reduction data for $\mathbf{w}$-homogenized factor algebras of PBW-algebras are computable. 
\end{lem}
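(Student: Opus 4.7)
The weight vector claim is immediate from \Cref{298}.\ref{298b}: since $B$ is a factor PBW-reduction-algebra of $A$, its commutation system coincides with $S$, and the condition $\deg_{\mathbf{w}}(d_{ij})\leq \deg_{\mathbf{w}}(x_ix_j)$ only involves $S$. Hence $\mathbf{w}$ is a weight vector on $B$.

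For the second assertion, I would proceed in two steps. Writing $B=A/M$ and letting $\tilde M\subseteq \Tn$ denote the preimage of the defining ideal of $B$ under $\Tn\twoheadrightarrow A\twoheadrightarrow B$, one checks $\tilde M=\lrideal{\Tn}{S\cup I\cup \tau_{A,\prec_{\mathbf{w}}}(G)}$, because $M$ is generated as a left (hence two-sided) ideal by the residues of $\tau_{A,\prec_{\mathbf{w}}}(G)$. Unwinding \Cref{319}.\ref{319a} applied to $B$ as a factor PBW-reduction-algebra yields
\[
B^{\mathbf{w}}=\K\langle h,\ul x\rangle\big/\ideal{h_{\mathbf{w}}(\tilde M)\cup\{hx_i-x_ih\mid 1\leq i\leq n\}}.
\]
Comparing with the presentation of $A^{\mathbf{w}}$, the first step is to identify $B^{\mathbf{w}}\cong A^{\mathbf{w}}/N$, where $N$ is the two-sided $A^{\mathbf{w}}$-ideal generated by the residue classes of $h_{\mathbf{w}}(\tau_{A,\prec_{\mathbf{w}}}(G))$. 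The inclusion $\supseteq$ of the corresponding ideals in $\K\langle h,\ul x\rangle$ is immediate. For $\subseteq$, every $r\in\tilde M$ has the form $r=\sum_k a_k s_k b_k$ with $s_k\in S\cup I\cup \tau_{A,\prec_{\mathbf{w}}}(G)$. Modulo the commutator relations $\{hx_i-x_ih\}$ the variable $h$ is central, and a $(1,\mathbf{w})$-homogeneous element is determined by its degree together with its dehomogenization $d_h$, so a direct comparison shows
\[
h_{\mathbf{w}}(r)=\sum_k h^{\deg_{\mathbf{w}}(r)-\deg_{\mathbf{w}}(a_ks_kb_k)}\,h_{\mathbf{w}}(a_k)\,h_{\mathbf{w}}(s_k)\,h_{\mathbf{w}}(b_k),
\]
placing $h_{\mathbf{w}}(r)$ in $\ideal{h_{\mathbf{w}}(\lrideal{\Tn}{S\cup I})\cup h_{\mathbf{w}}(\tau_{A,\prec_{\mathbf{w}}}(G))\cup\{hx_i-x_ih\}}$ as required.

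To then invoke \Cref{840} on $A^{\mathbf{w}}/N$, I need $G^{\mathbf{w}}$ to be a left Gröbner basis of the two-sided ideal $N$, whereas the hypothesis only gives it for the left ideal $N'=\lideal{A^{\mathbf{w}}}{h_{\mathbf{w}}(\tau_{A,\prec_{\mathbf{w}}}(G))}$. I would show $N=N'$ by proving $N'$ is stable under right multiplication by each $x_i$. For $\tilde g\in\tau_{A,\prec_{\mathbf{w}}}(G)$, the element $\overline{\tilde g}\cdot x_i$ lies in $M$ (since $M$ is two-sided), and \Cref{844} provides a representation $\overline{\tilde g}\cdot x_i=\sum_{g'\in G}a_{g'}g'$ with $\deg_{\mathbf{w}}(a_{g'}g')\leq\deg_{\mathbf{w}}(\overline{\tilde g})+\mathbf{w}_i$. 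Choosing lifts via $\tau_{A,\prec_{\mathbf{w}}}$ and using the pointwise identity $h_{\mathbf{w}}(\tilde g\cdot x_i)=h_{\mathbf{w}}(\tilde g)\cdot x_i$ in $\K\langle h,\ul x\rangle$, the same centrality-plus-degree argument used in the first step lets me homogenize this relation so that the residue of $h_{\mathbf{w}}(\tilde g)\cdot x_i$ in $A^{\mathbf{w}}$ becomes a left $A^{\mathbf{w}}$-combination of residues of $h_{\mathbf{w}}(\tau_{A,\prec_{\mathbf{w}}}(G))$, and hence lies in $N'$. With $N=N'$ established, \Cref{840} applied to $A^{\mathbf{w}}/N$ with the left Gröbner basis $G^{\mathbf{w}}$ delivers the stated PBW-reduction datum.

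The particular computability claim then follows: for a PBW-algebra $A$ one has $I=\emptyset$ by \Cref{1000}, and \Cref{845} provides a PBW-algebra presentation of $A^{\mathbf{w}}$ with $I^{\mathbf{w}}=\emptyset$; the Gröbner bases $G,G^{\mathbf{w}}$ together with their standard representatives $\tau_{A,\prec_{\mathbf{w}}}(G)$, $\tau_{A^{\mathbf{w}},(\prec')^{\mathbf{w}}}(G^{\mathbf{w}})$ are then effectively computable by \Cref{294}. The main technical obstacle is the degree-controlled homogenization in the second step: it is precisely the hypothesis that $G$ is a Gröbner basis with respect to the weighted ordering $\prec_{\mathbf{w}}$ (rather than $\prec$ alone) that controls $\mathbf{w}$-degrees in the standard representation and makes homogenization compatible with the ideal structure of $A^{\mathbf{w}}$.
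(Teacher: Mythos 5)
Your overall architecture coincides with the paper's: identify $B^{\mathbf{w}}$ with $A^{\mathbf{w}}$ modulo the ideal generated by the residue classes of $h_{\mathbf{w}}(\tau_{A,\prec_{\mathbf{w}}}(G))$, then invoke \Cref{840}; and your treatment of the weight-vector claim and of the final computability assertion is fine. However, your first step contains a genuine gap. For an \emph{arbitrary} representation $r=\sum_k a_k s_k b_k$ of $r\in\tilde M$, the claimed identity
\[
h_{\mathbf{w}}(r)=\sum_k h^{\deg_{\mathbf{w}}(r)-\deg_{\mathbf{w}}(a_ks_kb_k)}\,h_{\mathbf{w}}(a_k)\,h_{\mathbf{w}}(s_k)\,h_{\mathbf{w}}(b_k)
\]
is false: nothing forces $\deg_{\mathbf{w}}(a_ks_kb_k)\leq\deg_{\mathbf{w}}(r)$, so the exponents may be negative, and because of cancellation among the summands the right-hand side (with the exponents corrected to be taken from the maximal summand degree) equals $h^{c}h_{\mathbf{w}}(r)$ for some $c\geq 0$, not $h_{\mathbf{w}}(r)$ itself. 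This is the classical phenomenon that the homogenization of an ideal is in general strictly larger than the ideal generated by the homogenizations of a generating set (already $x_2\notin\ideal{x_1,\ x_2h-x_1^2}$ although $x_2\in\ideal{x_1,\ x_2-x_1^2}$ in $\K[x_1,x_2]$). Taken literally, your argument would prove the containment for \emph{any} generating set of $M$, which cannot be right and is precisely why the hypothesis demands a Gröbner basis with respect to the $\mathbf{w}$-degree ordering $\prec_{\mathbf{w}}$.

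The repair is the computation the paper carries out: reduce to a $(1,\mathbf{w})$-homogeneous $p$ in the defining ideal of $B^{\mathbf{w}}$, pass to $d_h(p)\in\tilde M$, and replace the arbitrary representation by a standard representation of $\ol{d_h(p)}$ with respect to $G$ and $\prec_{\mathbf{w}}$ (together with \Cref{888} for the $S\cup I$-part). This bounds the $\mathbf{w}$-degree of every summand by $\deg_{\mathbf{w}}(d_h(p))$, so that homogenizing introduces only nonnegative powers of $h$ and recovers $p$ itself inside the target ideal. You deploy exactly this degree-controlled homogenization in your second step (the verification that the left ideal $N'$ is two-sided), so the technique is available to you; it is simply missing from the step where it is first, and essentially, needed. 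Note also that the paper's injectivity computation already exhibits $p$ as a \emph{left} combination of the $h_{\mathbf{w}}(\tau_{A,\prec_{\mathbf{w}}}(g))$ modulo the defining ideal of $A^{\mathbf{w}}$, so once the first step is done correctly your separate argument for $N=N'$ comes out as a byproduct.
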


\begin{proof}
Let $B=(\Tn,S,J,\prec)$ be a PBW-reduction datum. 
We first show that the $\KK$-linear morphism
\begin{align*}
 \psi: \K\langle h,\ul x\rangle/\ideal{h_\mathbf{w}(\lrideal{\Tn}{S\cup I} )\cup \{hx_i-x_ih\mid 1\leq i\leq n\} \cup h_\mathbf{w}(\tau_{A,\prec_\mathbf{w}}(G))}&\to B^{\mathbf{w}},\\
 \ol p&\mapsto {\ol{p}}
\end{align*}
is an isomorphism. Clearly, $\psi$ is well-defined and surjective. For the injectivity let $p\in \K\langle h,\ul x\rangle$ with $\psi(\ol p)=0$. Because $\psi$ is $(1,\mathbf{w})$-graded, we may assume that $p$ is $(1,\mathbf{w})$-homogeneous. 
Using the relations $hx_i-x_ih$, we may further assume that $p\in \sum_{k\geq 0} h^k \Tn$. By definition of $B^\mathbf{w}$ 
$$p\in \lrideal{\K\langle h,\ul x\rangle}{h_\mathbf{w}(\lrideal{\Tn}{S\cup J})\cup \{hx_i-x_ih\mid 1\leq i\leq n\}}$$
and hence $d_h(p)\in \lrideal{\Tn}{S\cup J}$.
Using the Gröbner basis $G$ we find coefficients $a\in A^G$ for a standard representation
\[
 \ol{d_h(p)}=\sum_{g\in G} a_g g \text{ with } \lE_{\prec_\mathbf{w}} (a_g)+\lE_{\prec_\mathbf{w}}(g)\preceq_{\mathbf{w}} \lE_{\prec_\mathbf{w}}(\ol{d_h(p)})\preceq_{\mathbf{w}} \lE_{\prec_\mathbf{w}}(d_h(p)).
\]
For a suitable $r\in \lrideal{\Tn}{S\cup I}$ we obtain
\[
 d_h(p)=\sum_{g\in G} \tau_{A,\prec_\mathbf{w}}(a_g)\tau_{A,\prec_\mathbf{w}}(g)+r \text{ and } \lE_{\prec_\mathbf{w}}(r)\preceq_{\mathbf{w}} \lE_{\prec_\mathbf{w}}(d_h(p)).
\]
Therefore 
\begin{equation*}
p= h^{c_p} h_\mathbf{w}(d_h(p))=\sum_{g\in G}h^{c_g'} h_\mathbf{w}(\tau_{A,\prec_\mathbf{w}}(a_g))h_\mathbf{w}(\tau_{A,\prec_\mathbf{w}}(g))+h^{c_r}h_\mathbf{w}(r) 
\end{equation*}
for suitable $c'\in \NN^{G\sqcup \{p\}\sqcup \{r\}}$ proving injectivity.

So $B^{\mathbf{w}}$ is canonically isomorphic to 
\[
 A^{\mathbf{w}}/\lrideal{A^\mathbf{w}}{\ol{h_\mathbf{w}(\tau_{A,\prec_\mathbf{w}}(G))}}
\]
and thus an application of \Cref{840} finishes the proof.
\end{proof}


We investigate now the relationship between $\prec^E$ and $(\prec^E)^{\mathbf{w}}$:
\begin{rem}\label{326} Let $\mathbf{w}\in \NN_{>0}^n$ be a weight vector on $A$, $E$ a finite set and $\prec^E$ an ordering on $A^E$. Then there exists for $e\in E$ a set $I^\mathbf{w}_e$ consisting of $(1,\mathbf{w})$-homogeneous elements such that $(\prec^E)^{\mathbf{w}}$ is a well-ordering on $(A^{\mathbf{w}})^E=(\K\langle h,\ul x\rangle,S^{\mathbf{w}},I_e^\mathbf{w},(\prec^E)^{\mathbf{w}}_e)_{e\in E}$ (see \Cref{845}). Furthermore it holds:
\begin{enumerate}

 \item\label{326a} If $\deg_{(1,\mathbf{w})}(h^\alpha \ul x^\beta (e))=\deg_{(1,\mathbf{w})}(h^{\alpha'} \ul x^{\beta'} (e'))$, then, by definition of $(\prec^E)^{\mathbf{w}}$,
 \[
 \ul x^\beta (e) \prec^E \ul x^{\beta'} (e')\text{ if and only if } h^\alpha \ul x^\beta (e) (\prec^E)^{\mathbf{w}} h^{\alpha'} \ul x^{\beta'} (e')
 \]
 for $\alpha,\alpha'\in \NN$, $\beta,\beta'\in \NN^n$ and $e,e'\in E$.
Thus, for any $(1,\mathbf{w})$-homogeneous $a\in \K[ h,\ul x]^E$, 
\[
 d_h(\lm_{(\prec^E)^{\mathbf{w}}} (a ))= \lm_{ \prec^E} ( d_h(a)).
\]

\item\label{326b} The map $\rho_{(A^{\mathbf{w}})^E,(\prec^E)^{\mathbf{w}}}$ preserves $(1,\mathbf{w})$-homogeneity since $I'_e$ for $e\in E$ and $S^{\mathbf{w}}$ are $(1,\mathbf{w})$-homogeneous.
Since the commutation relations as well as the $I_e^\mathbf{w}$ for $e\in E$ are $(1,\mathbf{w})$-homogeneous, \Cref{829} preserves $(1,\mathbf{w})$-homogeneity.

\end{enumerate}
\end{rem}


We explain now the computation of Gröbner bases with respect to non-well-orderings.


\begin{prp}\label{327}
 Let $\mathbf{w}\in \NN_{>0}^n$ be a weight vector on $A$, $E$ a finite set, $\prec^E$ an ordering on $A^E$, and $M=\lideal{A}{\ol{M'}}\subseteq A^E$ for $M'\subseteq \Pn^E$ finite.
 If the set $G\subseteq (A^{\mathbf{w}})^E$ is a Gröbner basis of $\lideal{A^{\mathbf{w}}}{\ol{h_{\mathbf{w}}(M')}}$ with respect to $(\prec^E)^{\mathbf{w}}$ consisting of $(1,\mathbf{w})$-homogeneous elements, then $ d_h (\tau_{(\prec^E)^{\mathbf{w}}}(G))$ induces a Gröbner basis of $M$ with respect to $\prec^E$.
 An analogous statement holds for two-sided modules.
\end{prp}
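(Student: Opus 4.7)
The plan is to pass to the $(1,\mathbf{w})$-homogenization of $t$, apply the Gröbner-basis hypothesis in $(A^{\mathbf{w}})^E$, and then dehomogenize via $d_h$, using the $(1,\mathbf{w})$-homogeneity of all data so that the degree bound descends from $(\prec^E)^{\mathbf{w}}$ to $\prec^E$. Concretely, fix $t\in \Pn^E$ with $\overline{t}\in M$ and set $T:=h_{\mathbf{w}}(t)$, a $(1,\mathbf{w})$-homogeneous element of weight $W:=\deg_{\mathbf{w}}(t)$ satisfying $d_h(T)=t$.

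The crucial intermediate claim is $\overline{T}\in N:=\lideal{A^{\mathbf{w}}}{\overline{h_{\mathbf{w}}(M')}}$. To see this, lift a representation $\overline{t}=\sum_i \overline{u_i}\,\overline{m_i'}$ (with $u_i\in A$, $m_i'\in M'$) to an equation $t=\sum_i \widetilde u_i m_i'+r$ in $\Tn^E$, with $\widetilde u_i\in\Pn$ lifts via $\tau$ and $r\in\lrideal{\Tn}{I\cup S}^E$. In $A^{\mathbf{w}}$ the variable $h$ is central and $h_{\mathbf{w}}(\lrideal{\Tn}{I\cup S})$ lies in the defining ideal, so $\overline{h_{\mathbf{w}}(r)}=0$, and monomial-by-monomial bookkeeping yields $\overline{h_{\mathbf{w}}(\widetilde u_i m_i')}=h^{\deg_{\mathbf{w}}(\widetilde u_i)+\deg_{\mathbf{w}}(m_i')-W_i}\,\overline{h_{\mathbf{w}}(\widetilde u_i)}\cdot\overline{h_{\mathbf{w}}(m_i')}$ in $A^{\mathbf{w}}$, where $W_i:=\deg_{\mathbf{w}}(\widetilde u_i m_i')$. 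Homogenizing the lifted equation to a common weight $W'':=\max_i(W_i,\deg_{\mathbf{w}}(r),W)$ and reading off the pure weight-$W''$ part gives $h^{W''-W}\overline{T}\in N$. Then homogeneity of $\overline T$ together with centrality of $h$ — using that the Rees-algebra structure of $A^{\mathbf{w}}$ makes $h$ a non-zero-divisor on $(A^\mathbf{w})^E/N$ — allows me to divide through and conclude $\overline{T}\in N$.

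Since $\mathbf{w}\in\NN^n_{>0}$, the ordering $(\prec^E)^{\mathbf{w}}$ is a well-ordering by \Cref{845}, and the Gröbner-basis hypothesis yields $b\in\K\langle h,\ul x\rangle^G$ with $\overline{T}=\sum_{g\in G}\overline{b_g}\,g$ and $\lE_{(\prec^E)^{\mathbf{w}}_{\lcomp(g)}}(b_g)+\lec_{(\prec^E)^{\mathbf{w}}}(\tau(g))\preceq\lec_{(\prec^E)^{\mathbf{w}}}(T)$ for every $g\in G$. By \Cref{326}.\ref{326b} I may choose each $b_g$ to be $(1,\mathbf{w})$-homogeneous of weight $W-\deg_{\mathbf{w}}(g)$. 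Applying the ring homomorphism $d_h$ produces $\overline{t}=\sum_{g\in G}d_h(\overline{b_g})\cdot\overline{d_h(\tau(g))}$ in $A^E$.

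For the degree bound, both sides of the Gröbner-basis inequality have the same $(1,\mathbf{w})$-weight $W$ thanks to the homogeneity of $b_g$ and $\tau(g)$, so by definition of $(\prec^E)^{\mathbf{w}}$ the comparison reduces to $\prec^E$ on the $\ul x$-parts. Combined with \Cref{326}.\ref{326a}, which identifies the leading monomial of a $(1,\mathbf{w})$-homogeneous element with that of its dehomogenization, this yields $\lE_{\prec^E_{\lcomp}}(d_h(\overline{b_g}))+\lec_{\prec^E}(d_h(\tau(g)))\preceq^E\lec_{\prec^E}(t)$, which is precisely the standard-representation bound of \Cref{846}. The two-sided statement follows by running the same argument with two-sided Gröbner bases throughout. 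I expect the main obstacle to be the intermediate claim $\overline{T}\in N$: the non-linearity of $h_{\mathbf{w}}$ under addition forces me to work after multiplication by a suitable power of $h$ and then to exploit the Rees-algebra-like structure of $A^{\mathbf{w}}$ (central $h$ together with $(1,\mathbf{w})$-homogeneity of $\overline T$) to eliminate that power.
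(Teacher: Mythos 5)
Your overall route is the same as the paper's: homogenize $t$, use the Gröbner basis hypothesis in $(A^{\mathbf{w}})^E$ with $(1,\mathbf{w})$-homogeneous coefficients, dehomogenize with $d_h$, and use \Cref{326}.\ref{326a} to transport the leading-exponent bound from $(\prec^E)^{\mathbf{w}}$ down to $\prec^E$. However, the step you yourself single out as the main obstacle is resolved incorrectly, and the intermediate claim it is meant to establish is false in general.

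Concretely, you claim that $h$ is a non-zero-divisor on $(A^{\mathbf{w}})^E/N$ with $N=\lideal{A^{\mathbf{w}}}{\ol{h_{\mathbf{w}}(M')}}$, and use this to divide out $h^{W''-W}$ and conclude $\ol{h_{\mathbf{w}}(t)}\in N$. The Rees-algebra structure makes $h$ a non-zero-divisor on $(A^{\mathbf{w}})^E$ itself, but not on the quotient by $N$: $N$ is generated by the homogenizations of the chosen generators, which is in general strictly smaller than the $h$-saturated ``full'' homogenization of $M$, and the difference is exactly $h$-torsion of the quotient. Already in the commutative case $A=\K[x,y]$, $\mathbf{w}=(1,1)$, $M'=\{x,\,y-x^2\}$ one has $t=y\in M$, $h_{\mathbf{w}}(t)=y$, and $hy=(yh-x^2)+x\cdot x\in N$ while $y\notin N=\ideal{x,yh}$. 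So $\ol{h_{\mathbf{w}}(t)}\in N$ simply fails, and no argument can rescue it as stated. The repair is the one the paper uses: do not divide by $h$ at all. You have already shown $\ol{h^{c}h_{\mathbf{w}}(t)}\in N$ for some $c\geq 0$; since $h^{c}h_{\mathbf{w}}(t)$ is still $(1,\mathbf{w})$-homogeneous, apply the Gröbner basis hypothesis to it, and observe that $d_h\bigl(h^{c}h_{\mathbf{w}}(t)\bigr)=t$ and that, by \Cref{326}.\ref{326a}, $\lec_{(\prec^E)^{\mathbf{w}}}\bigl(h^{c}h_{\mathbf{w}}(t)\bigr)$ dehomogenizes to $\lec_{\prec^E}(t)$, so the standard-representation bound for $t$ follows exactly as in your final paragraph. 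A minor further omission: \Cref{846} also requires $d_h(\tau_{(\prec^E)^{\mathbf{w}}}(G))$ to consist of elements representing members of $M$, which needs the (easy) verification that $d_h(g)\in M$ for $g\in G$; the paper records this as a separate first step.
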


\begin{proof}
 We first show that $ d_h(G)\subseteq M$: For any element $g\in G\subseteq \lideal{A^{\mathbf{w}}}{ \ol{ h_{\mathbf{w}}(M')}}$ there are coefficients $a\in (A^{\mathbf{w}})^{M'}$ such that $g=\sum_{m'\in M'} a_{m'} \ol{h_{\mathbf{w}}(m')}$. Hence 
 $$ d_h(g)=
 \sum_{m'\in M'} d_h(a_{m'} ) d_h(\ol{h_{\mathbf{w}}(m')})=\sum_{m'\in M'} d_h(a_{m'} )\ol{m'}\in M.$$

 The second step is proving that $ d_h(G)$ is a Gröbner basis of $M$: For $t\in \Pn^E$ with $\ol t\in M$ there are coefficients $a\in \Tn^{M'}$ such that $\ol{t}=\sum_{m'\in M'} \ol{a_{m'}} \ol{m'}$. 
 This implies that there is $r\in \lrideal{\Tn}{S^E \cup I^E}$ such that $t=\sum_{m'\in M'} {a_{m'}} {m'}+r $ and hence we find
 $c\in \NN^{M'\sqcup \{t\}\sqcup \{r\}}$ such that 
 \[
 h^{c_t} h_{\mathbf{w}}(t)=\sum_{m'\in M'} h^{c_{m'}} h_{\mathbf{w}}(a_{m'})h_{\mathbf{w}}(m')+h^{c_{r}}h_{\mathbf{w}}(r)
 \]
showing that
\[
 \ol{ h^{c_t} h_{\mathbf{w}}(t)}\in \lideal{A^{\mathbf{w}}}{\ol{h_{\mathbf{w}}(M')}}.
\]
As $G$ is a $(1,\mathbf{w})$-homogeneous Gröbner basis and $\ol{h^{c_t} h_{\mathbf{w}}(t)}$ is $(1,\mathbf{w})$-homogeneous, we obtain a $(1,\mathbf{w})[(\deg_{(1,\mathbf{w})}(g))_{g\in G}]$-homogeneous $b\in (A^{\mathbf{w}})^G$ such that 
\[
 \ol{h^{c_t} h_{\mathbf{w}}(t)}=\sum_{g\in G} b_g g = \sum_{g\in G} \ol{ \tau_{(\prec^E)^{\mathbf{w}}_{\lcomp(g)}}(b_g)} \cdot \ol{ \tau_{(\prec^E)^{\mathbf{w}}}(g)}
\]
and 
\begin{equation}\label{894}
\lE_{(\prec^E)^{\mathbf{w}}_{\lcomp(g)}} (b_g)+\lec_{(\prec^E)^{\mathbf{w}}} (g)(\preceq^E)^{\mathbf{w}} \lec_{(\prec^E)^{\mathbf{w}}}(\ol{h^{c_t} h_{\mathbf{w}}(t)})
(\preceq^E)^{\mathbf{w}} \lec_{(\prec^E)^{\mathbf{w}}}(h^{c_t} h_{\mathbf{w}}(t)).
\end{equation}
Dehomogenizing we get
\begin{equation*}
\ol t= \ol{d_h(h^{c_t} h_{\mathbf{w}}(t)) } = \sum_{g\in G} \ol{ d_h(\tau_{(\prec^E)^{\mathbf{w}}_{\lcomp(g)}}( b_g))}\cdot \ol{ d_h(\tau_{(\prec^E)^{\mathbf{w}}}(g))}. 
\end{equation*}
By \Cref{894} and \Cref{326}.\ref{326a}, we have 
\begin{equation*}
\lE_{(\prec^E)^{\mathbf{w}}_{\lcomp(g)}} (d_h(\tau_{(\prec^E)^{\mathbf{w}}_{\lcomp(g)}}( b_g)))+\lec_{\prec^E} (d_h(\tau_{(\prec^E)^{\mathbf{w}}}(g)))
 \preceq^{E} \lec_{\prec^E}(d_h(h^{c_t} h_{\mathbf{w}}(t)))= \lec_{\prec^E}(t) 
\end{equation*}
concluding the proof.
\end{proof}
 

\begin{dfn}
Let $E$ a finite set and $\prec^E$.
\begin{enumerate}
 \item We call a well-ordering $\prec^E$ on $A^E$ \emph{computable} if we can compute $I_e$ for $e\in E$ such that $A^E=(\Tn,S, I_e,\prec^E_e)_{e\in E}$.
 \item We call the non-well-ordering $\prec^E$ on $A^E$ \emph{computable} if we can compute a weight vector $\mathbf{w}\in \NN^n_{>0}$ such that the ordering $(\prec^E)^{\mathbf{w}}$ on $(A^{\mathbf{w}})^E$ is computable.
\end{enumerate}
\end{dfn}


\Cref{394}, \Cref{327} and \Cref{326}.\ref{326b} imply

 
\begin{cor}\label{328}
 Let $E$ be a finite set. Gröbner bases with respect to any ordering on $A^E$ exist and are computable for computable orderings.
\end{cor}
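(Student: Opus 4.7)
The plan is to split into two cases depending on whether $\prec^E$ is a well-ordering. For the well-ordering case, both existence and computability are immediate from \Cref{294} together with \Cref{829}: when $\prec^E$ is computable, the required PBW-reduction datum for $(A^E,\prec^E)$ is in hand by the very definition of computability, so \Cref{829} can be run directly.

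For a non-well-ordering $\prec^E$ the plan is to lift the problem to the $\mathbf{w}$-homogenized algebra. First I would invoke \Cref{394} to obtain a strictly positive, effectively computable weight vector $\mathbf{w}\in \NN^n_{>0}$ on $A$. By \Cref{845}, the ordering $(\prec^E)^\mathbf{w}$ on $(A^\mathbf{w})^E$ is then a well-ordering, and by the definition of computability of a non-well-ordering it is itself computable whenever $\prec^E$ is. Given a finite subset $M'\subseteq \Pn^E$ with $M=\lideal{A}{\ol{M'}}$, I would feed the homogenized generators $h_\mathbf{w}(M')$ into \Cref{829} to compute a Gröbner basis $G$ of $\lideal{A^\mathbf{w}}{\ol{h_\mathbf{w}(M')}}$ with respect to $(\prec^E)^\mathbf{w}$. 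Since the input is $(1,\mathbf{w})$-homogeneous, \Cref{326}.\ref{326b} guarantees that $G$ can be taken $(1,\mathbf{w})$-homogeneous as well. Finally, \Cref{327} produces the desired Gröbner basis $d_h(\tau_{(\prec^E)^\mathbf{w}}(G))$ of $M$ with respect to $\prec^E$.

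Pure existence (without the computability hypothesis) follows from the same route with the effectivity requirements dropped: \Cref{394} still supplies a strictly positive $\mathbf{w}$, the algebra $(A^\mathbf{w})^E$ is a PBW-reduction-algebra equipped with the well-ordering $(\prec^E)^\mathbf{w}$ by \Cref{845}, \Cref{294} furnishes a Gröbner basis which by \Cref{326}.\ref{326b} may be taken $(1,\mathbf{w})$-homogeneous, and \Cref{327} transports it back to $A^E$. The argument is essentially bookkeeping; the only delicate point worth emphasising is the preservation of $(1,\mathbf{w})$-homogeneity by the Buchberger algorithm, on which \Cref{327} crucially depends, and this is exactly what \Cref{326}.\ref{326b} is designed to provide.
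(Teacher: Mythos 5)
Your proposal is correct and follows the paper's own route exactly: the paper derives \Cref{328} from \Cref{394}, \Cref{327} and \Cref{326}.\ref{326b}, and \Cref{370} implements precisely your case split between well-orderings (direct application of \Cref{829}) and non-well-orderings (homogenize with a strictly positive $\mathbf{w}$, compute a $(1,\mathbf{w})$-homogeneous Gröbner basis upstairs, dehomogenize). No gaps.
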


 
The following algorithm summarizes the computation of such Gröbner bases. 


\begin{algorithmbis}[Given an $A$-submodule $M$ of a free $A$-module and an ordering on that free module, this algorithm computes a Gröbner basis of $M$ with respect to that ordering.]
\label[algorithm]{370}
\begin{algorithmic}[1]
\REQUIRE {A finite set $E$, an $A$-module $M=\lideal{A}{\ol{M'}}\subseteq A^E$ with $M'\subseteq \Tn^E$ finite and a computable ordering $\prec^E$ on $A^E$. }
\ENSURE A finite set $G\subseteq \Tn^E$ inducing a Gröbner basis of $M$ with respect to $\prec^E$.
\IF{ $\prec^E$ is a well-ordering}
\STATE Compute a Gröbner basis $G'$ of $M$ with respect to $\prec^E$ using \Cref{829}.
\RETURN $\tau_{A^E,\prec^E}(G)$.
\ENDIF
\STATE Compute a suitable weight vector $\mathbf{w}\in \NN_{>0}^n$ on $A$ and a PBW-reduction datum for $((A^{\mathbf{w}})^E,\allowbreak (\prec^E)^{\mathbf{w}})$.
\STATE Set $M':=h_{\mathbf{w}}(M')$.
\STATE Compute a $(1,\mathbf{w})$-homogeneous Gröbner basis $G'$ of $\lideal{A^{\mathbf{w}}}{\ol{M'}}$ over the ring $A^{{\mathbf{w}}}$ with respect to $(\prec^E)^{\mathbf{w}}$ using \Cref{829}.
\STATE Set $G:=d_h(\tau_{(\prec^E)^{\mathbf{w}}}(G))$.
\RETURN $G$.
\end{algorithmic}
\end{algorithmbis}


We can use Gröbner bases with respect to $\prec_{\mathbf{u}[\mathbf{s}]}^E$ to explicitly find generators of the filtration induced by $F^\mathbf{u}[\mathbf{s}]_\bullet A^E$ on submodules of $A^E$ if that ordering is computable (see \Cref{844}):


\begin{algorithmbis}[Given a weight vector $\mathbf{u}$ and an $A$-submodule $M $ of a free $A$-module, this algorithm computes $F^{\mathbf{u}}\bracket{s}_\bullet M$.]
\label[algorithm]{393}
\begin{algorithmic}[1]
\REQUIRE {A finite set $E $, an $A$-module $M=\lideal{A}{M'}\subseteq A^E$ with $M'$ finite, a weight vector $\mathbf{u}\in \ZZ^n$ on $A$, a shift vector $\mathbf{s}\in \ZZ^E$ and a computable ordering $\prec^E_{\mathbf{u}[\mathbf{s}]}$ on $A^E$.}
\ENSURE A finite set $G\subseteq \Tn^E$ such that $F^{\mathbf{u}}[\mathbf{s}]_\bullet M =\sum_{g \in G } F^\mathbf{u}_{\bullet-\deg_{\mathbf{u}[s]}(g )}A\cdot \ol{g}$.
\STATE Compute a set $G\subseteq \Tn^E$ inducing a Gröbner basis of $M$ with respect $\prec_{\mathbf{u}[\mathbf{s}]}^E$ by \Cref{370}. 
\RETURN $G$.
\end{algorithmic}
\end{algorithmbis}


\begin{algorithmbis}[Given a weight vector $\mathbf{u}$ and an $A$-submodule $M $ of a free $A$-module, this algorithm computes $F^{\mathbf{u}}\bracket{s}_k M$ for fixed $k\in \ZZ$.]
\label[algorithm]{340}
\begin{algorithmic}[1]
\REQUIRE {A finite set $E $, an $A$-module $M=\lideal{A}{M'}\subseteq A^E$ with $M'$ finite, a weight vector $\mathbf{u}\in \ZZ^n$ on $A$, a shift vector $\mathbf{s}\in \ZZ^E$, a computable ordering $\prec^E_{\mathbf{u}[\mathbf{s}]}$ on $A^E$, and $k\in \ZZ$.}
\ENSURE A finite set $G\subseteq \Tn^E$ such that $F^{\mathbf{u}}[\mathbf{s}]_k M =\lideal{F^{\mathbf{u}}_0A}{G}$.
\STATE Compute a set $G\subseteq \Tn^E$ inducing a Gröbner basis of $M$ with respect $\prec_{\mathbf{u}[\mathbf{s}]}^E$ by \Cref{370}. 
\STATE Set $G :=\{a\ol g\mid g\in G, a\in P^{A,\mathbf{u}}_{k-\deg_{\mathbf{u}[\mathbf{s}]}(g)}\}$. 
\RETURN $G$.
\end{algorithmic}
\end{algorithmbis}


\Cref{328} and \Cref{844} imply: 


\begin{cor}\label{395}
The filtration $F^\mathbf{u}[\mathbf{s}]_\bullet M$ is generated by a finite set for every $A$-module $M\subseteq A^E$ (with $E$ finite), every weight vector $\mathbf{s}\in \ZZ^n$ and every shift vector $\mathbf{s}\in \ZZ^E$.
\end{cor}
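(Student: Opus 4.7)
The plan is to combine \Cref{328} and \Cref{844} directly; no new construction is needed. First, pick any well-ordering $\prec^E$ on $A^E$, which exists by the very definition of a PBW-reduction-algebra (taking for instance $\prec^E = (<^E,\prec)$ from \Cref{814}\ref{814c} for some total order $<^E$ on $E$). Then the refinement $\prec^E_{\mathbf{u}[\mathbf{s}]}$ introduced in \Cref{349} is a monomial ordering on $\SMon(\Tn^E)$; in general it fails to be a well-ordering precisely when $\mathbf{u}$ has negative entries, but that is exactly the case handled by the homogenization machinery of \Cref{319}--\Cref{327}.

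Next, I would invoke \Cref{328} (the pure existence statement) to obtain a Gröbner basis $G \subseteq A^E$ of $M$ with respect to $\prec^E_{\mathbf{u}[\mathbf{s}]}$. By the definition of a Gröbner basis (\Cref{293}\eqref{293a} and the extension in \Cref{846}), the set $G$ is finite. Finally, by \Cref{844}, such a Gröbner basis generates $F^\mathbf{u}[\mathbf{s}]_\bullet M$ as a filtered $F^\mathbf{u}_\bullet A$-module, in the precise sense that
\[
 F^\mathbf{u}[\mathbf{s}]_k M = \sum_{g \in G} F^\mathbf{u}_{k - \deg_{\mathbf{u}[\mathbf{s}]}(g)}A \cdot g \quad \text{for every } k \in \ZZ.
\]
This is the desired finite generation.

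All of the real work has already been done in the preceding subsections, so the only step requiring any care is the invocation of \Cref{328}: one has to note that its existence half does not need the ordering to be computable, because the existence of a strictly positive weight vector $\mathbf{w} \in \NN^n_{>0}$ making $A^{\mathbf{w}}$ a PBW-reduction-algebra (\Cref{394}, \Cref{845}) guarantees that the well-ordering $(\prec^E_{\mathbf{u}[\mathbf{s}]})^{\mathbf{w}}$ admits a Gröbner basis via \Cref{829}, and then \Cref{327} dehomogenizes it to a Gröbner basis with respect to $\prec^E_{\mathbf{u}[\mathbf{s}]}$. The potential obstacle -- that $\prec^E_{\mathbf{u}[\mathbf{s}]}$ is not a well-ordering when $\mathbf{u}\notin\NN^n$ -- is therefore already resolved upstream, and the corollary falls out with no additional argument.
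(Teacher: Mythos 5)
Your proposal is correct and follows exactly the paper's intended argument: the paper derives \Cref{395} precisely by combining the existence half of \Cref{328} (which rests on \Cref{394} and \Cref{327} for non-well-orderings) with \Cref{844}. Nothing further is needed.
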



In the remainder of this \namecref{334} we aim for computing $\gr^{\mathbf{u}[\mathbf{s}]}\!M$ as an $\gr^{\mathbf{u}}\!A$ for an $A$-submodule $M\subseteq A^E$.


\begin{prp}\label{887}Let $\mathbf{u}\in \ZZ^n$ and $\mathbf{w}\in \NN^n_{>0}$ be weight vectors on $A$, and $A^{\mathbf{w}}=(\K\langle h,\ul x\rangle,S^{\mathbf{w}}, I^\mathbf{w},\allowbreak {(\prec_\mathbf{u})^{\mathbf{w}}})$ a PBW-reduction datum with $(1,\mathbf{w})$-homogeneous $I_\mathbf{w}$. 
\begin{enumerate}

 \item\label{887a} The natural $\K$-linear surjective map \[
\psi: \Tn\to \gr^\mathbf{u}\! A,\; x_{i_1}\cdots x_{i_k}\mapsto \ol{x_{i_1}\cdots x_{i_k}}+F^\mathbf{u}_{\deg_\mathbf{u}(x_{i_1}\cdots x_{i_k})-1}A\in \gr^{\mathbf{u}}_{\deg_\mathbf{u}(x_{i_1}\cdots x_{i_k})} A
\]
identifies the $\mathbf{u}$-graded algebra associated with $A$ with a PBW-reduction-algebra:
\[
\gr^\mathbf{u}\! A=\Tn/\ideal{\lt_{\mathbf{u}}(S)\cup \lt_{\mathbf{u}}(d_h(I^\mathbf{w}))}=(\Tn, \lt_{\mathbf{u}}(S),\allowbreak \lt_{\mathbf{u}}(d_h(I^\mathbf{w})), \prec).
\] 

\item\label{887b} If $\mathbf{u}\in \NN^n$ and $A=(\Tn,S,I_\mathbf{u},\prec_\mathbf{u})$, then $$\gr^\mathbf{u} \!A=(\Tn,\lt_{\mathbf{u}}(S),\lt_{\mathbf{u}}(I_\mathbf{u}), \prec).$$ 

\item \label{887c} Consider the finite set $E$, the ordering $\prec^E$ on $A^E$, the shift vector $\mathbf{s}\in \ZZ^E$ and the $A$-module $M\subseteq A^E$.
Using Part~\eqref{887a} we can identify
\[
 \gr^{\mathbf{u}[\mathbf{s}]}\! A^E= \Tn^E/\ideal{\lt_{\mathbf{u}}(S)^E\cup \lt_{\mathbf{u}}(d_h(I^\mathbf{w}))^E},
\]
where we put $\ol{(e)}$ in degree $\mathbf{s}_e$.

Let $G\subseteq \Pn^E$ induce a Gröbner basis of $M $ with respect to ${\prec}^E_{\mathbf{u}[\mathbf{s}]}$. 
 Under the above identification ${\lt_{\mathbf{u}[\mathbf{s}]}(G )}\subseteq \Tn^E$ then induces a Gröbner basis of the $\gr^{\mathbf{u} }\!A$-submodule $\gr^{\mathbf{u}[\mathbf{s}]}\! M$ of $ \gr^{\mathbf{u}[\mathbf{s}]}\! A^E$ with respect to $\prec^E$.

\item\label{887d} Let $M$ be a two-sided submodule of $A$ with Gröbner basis with respect to $\prec$ induced by $G\subseteq \Tn $. Then
\[
\gr^\mathbf{u} (A/M)= \gr^\mathbf{u} \!A/\gr^\mathbf{u} \!M=(\Tn, \lt_{\mathbf{u}}(S),\lt_{\mathbf{u}}(d_h(I^\mathbf{w}))\cup \rho_{\gr^\mathbf{w}\!A,\prec}(\lt_\mathbf{u}(G)),\allowbreak \prec)
\]
is a PBW-reduction-algebra.

\end{enumerate}
\end{prp}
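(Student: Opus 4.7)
The plan is to prove Part (a) in detail and derive Parts (b)--(d) as specializations or applications. For Part (a), I set $B := (\Tn, \lt_\mathbf{u}(S), \lt_\mathbf{u}(d_h(I^\mathbf{w})), \prec)$ as the candidate PBW-reduction-algebra and construct a natural $\K$-algebra homomorphism $\phi : B \to \gr^\mathbf{u}\!A$ induced by $x_i \mapsto \ol{x_i} \in \gr^\mathbf{u}_{\mathbf{u}_i}\!A$. The routine opening steps are verifying that $(\lt_\mathbf{u}(S), \prec)$ is a commutation system (using $\deg_\mathbf{u}(d_{ij}) \leq \deg_\mathbf{u}(x_ix_j)$ from the weight-vector hypothesis, so $\lt_\mathbf{u}(s)$ retains the shape required by \Cref{805}.\ref{805e}), and checking $\phi$ is well-defined: for $f \in I^\mathbf{w}$, $d_h(f) \in \ideal{S\cup I}$ vanishes in $A$, so $\lt_\mathbf{u}(d_h(f)) = d_h(f) - (\text{strictly lower } \mathbf{u}\text{-degree terms})$ represents an element of $F^\mathbf{u}_{k-1}A$ modulo the relations, where $k = \deg_\mathbf{u}(d_h(f))$; hence its image in $\gr^\mathbf{u}_k\!A$ is zero.

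The main obstacle is verifying both the PBW-reduction-datum condition $\operatorname{L}_\prec(\lt_\mathbf{u}(d_h(I^\mathbf{w}))) = \operatorname{l}_\prec(\ideal{\lt_\mathbf{u}(d_h(I^\mathbf{w})) \cup \lt_\mathbf{u}(S)}\cap \Pn)$ for $B$ and the bijectivity of $\phi$. The strategy is to identify the $\K$-bases of both sides via \Cref{n1}: $B$ has basis $\{\ul x^\alpha : \alpha \notin \operatorname{L}_\prec(\lt_\mathbf{u}(d_h(I^\mathbf{w})))\}$ once the datum condition holds, while $\gr^\mathbf{u}\!A$ carries the induced basis $\{\ul x^\alpha : \alpha \notin \operatorname{L}_{\prec_\mathbf{u}}(\ideal{S\cup I}\cap \Pn)\}$ graded by $\mathbf{u}$-degree. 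The compatibility $\lE_\prec(\lt_\mathbf{u}(p)) = \lE_{\prec_\mathbf{u}}(p)$ -- valid by definition of $\prec_\mathbf{u}$ as the refinement of $\mathbf{u}$-degree by $\prec$ -- converts $\operatorname{L}_\prec(\lt_\mathbf{u}(\cdot))$ into $\operatorname{L}_{\prec_\mathbf{u}}(\cdot)$. The crux is then showing $\operatorname{L}_{\prec_\mathbf{u}}(d_h(I^\mathbf{w})) = \operatorname{L}_{\prec_\mathbf{u}}(\ideal{S\cup I}\cap \Pn)$, for which I invoke the two-sided variant of \Cref{327} applied to the Gröbner basis $I^\mathbf{w}$ of $A^\mathbf{w}$ with respect to $(\prec_\mathbf{u})^\mathbf{w}$: dehomogenizing its leading data yields Gröbner-basis-like information for the contraction $\ideal{S\cup I}\cap \Pn$ under $\prec_\mathbf{u}$. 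The hardest bookkeeping concerns the interaction of the homogenization ordering $(\prec_\mathbf{u})^\mathbf{w}$ with $d_h$ and $\lt_\mathbf{u}$, and verifying that $(1,\mathbf{w})$-homogeneity of $I^\mathbf{w}$ is faithfully transmitted through $d_h$ followed by $\lt_\mathbf{u}$.

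Part (b) is the well-ordered special case $\mathbf{u} \in \NN^n$, in which $\prec_\mathbf{u}$ is already a well-ordering on $A$, the homogenization step is unnecessary, and $I_\mathbf{u}$ directly plays the role of $d_h(I^\mathbf{w})$, with $\lt_\mathbf{u}(I_\mathbf{u})$ forming the graded PBW-reduction-datum. Part (c) follows from Part (a) applied componentwise to $A^E$ via \Cref{833}.\ref{833b}, combined with \Cref{844}: a Gröbner basis $G$ of $M$ with respect to $\prec^E_{\mathbf{u}[\mathbf{s}]}$ generates $F^\mathbf{u}[\mathbf{s}]_\bullet M$ as an $F^\mathbf{u}_\bullet A$-module, so $\lt_{\mathbf{u}[\mathbf{s}]}(G)$ generates $\gr^{\mathbf{u}[\mathbf{s}]}\!M$ as a $\gr^\mathbf{u}\!A$-module; the Gröbner basis property with respect to $\prec^E$ in the associated graded follows by the standard $s$-polynomial transfer argument from the corresponding property with respect to $\prec^E_{\mathbf{u}[\mathbf{s}]}$. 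Finally, Part (d) is Part (c) specialized to two-sided submodules, combined with \Cref{840}, which turns a Gröbner basis of $\gr^\mathbf{u}\!M$ (after normal-form reduction via $\rho_{\gr^\mathbf{u}\!A, \prec}$) into a PBW-reduction datum for the quotient $\gr^\mathbf{u}(A/M) = \gr^\mathbf{u}\!A / \gr^\mathbf{u}\!M$.
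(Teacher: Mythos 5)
Your route for Part~\ref{887a} coincides with the paper's: both reduce the claim to the ideal equality $\ideal{\lt_{\mathbf{u}}(S)\cup \lt_{\mathbf{u}}(d_h(I^\mathbf{w}))}=\ideal{\lt_\mathbf{u}(\lrideal{\Tn}{S\cup I})}$ (the right-hand side being the kernel of $\psi$) together with Condition~\ref{808}\eqref{808b} for the candidate datum, and both obtain the nontrivial inclusion by lifting to the homogenization $A^{\mathbf{w}}$ with the well-ordering $(\prec_\mathbf{u})^{\mathbf{w}}$ and dehomogenizing; your treatment of Parts~\ref{887b}--\ref{887d} likewise matches (for Part~\ref{887c} no Buchberger-type $s$-polynomial transfer is needed: taking $\mathbf{u}[\mathbf{s}]$-leading terms of a standard representation with respect to $\prec^E_{\mathbf{u}[\mathbf{s}]}$ of a lift $t+t'$ of a homogeneous class directly yields a standard representation with respect to $\prec^E$, since $\prec^E_{\mathbf{u}[\mathbf{s}]}$ refines degree first).

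The one genuine weak point sits exactly where you locate the crux. \Cref{327} does not apply to the object you need: it concerns left (or two-sided) submodules of free modules over the quotient algebra $A$, presented by generators whose homogenizations admit a Gröbner basis in $A^{\mathbf{w}}$, whereas here you must control the contraction $\lrideal{\Tn}{S\cup I}\cap\Pn$ inside the free algebra, and $I^\mathbf{w}$ is not given as a Gröbner basis of an ideal of $A^{\mathbf{w}}$ but as the $I$-part of a PBW-reduction datum, i.e., it controls $\operatorname{l}_{(\prec_\mathbf{u})^{\mathbf{w}}}(\ideal{I^\mathbf{w}\cup S^{\mathbf{w}}}\cap \K[h,\ul x])$. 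The correct engine is \Cref{888} applied to $R=\lrideal{\K\langle h,\ul x\rangle}{S^{\mathbf{w}}\cup I^\mathbf{w}}$: for $\mathbf{u}$-homogeneous $p\in\Pn$ with $p+p'\in\ideal{S\cup I}$ and $\deg_\mathbf{u}(p')<\deg_\mathbf{u}(p)$, write $h^{l''}h_\mathbf{w}(p+p')=\sum_{g\in I^\mathbf{w}}a_g g+\sum_{(t,s,t')\in U}tst'$ with all terms $(1,\mathbf{w})$-homogeneous and with $\lE(a_g)+\lE(g)\,(\preceq_\mathbf{u})^{\mathbf{w}}\,\lE(h^{l''}h_\mathbf{w}(p+p'))$, equality holding for some $g\in I^\mathbf{w}$. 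Dehomogenizing via \Cref{326}\eqref{326a} and extracting top $\mathbf{u}$-degree parts then gives both $p\in\ideal{\lt_\mathbf{u}(S)\cup\lt_\mathbf{u}(d_h(I^\mathbf{w}))}$ and, from the equality case combined with $\lE_{\prec}(\lt_\mathbf{u}(r))=\lE_{\prec_\mathbf{u}}(r)$ and \Cref{837}, the inclusion $\operatorname{l}_\prec(\ideal{\lt_\mathbf{u}(S)\cup\lt_\mathbf{u}(d_h(I^\mathbf{w}))}\cap\Pn)\subseteq\operatorname{L}_\prec(\lt_\mathbf{u}(d_h(I^\mathbf{w})))$. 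With \Cref{888} substituted for \Cref{327} your argument closes.
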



\begin{proof}\
\begin{enumerate}

\item The map $\psi$ with kernel $\ideal{\lt_\mathbf{u}(\lrideal{\Tn}{S\cup I})}$ induces an isomorphism of $\K$-algebras $$\Tn/\ideal{\lt_\mathbf{u}(\lrideal{\Tn}{S\cup I})}\cong \gr^\mathbf{u}\! A.$$
As $d_h(I^\mathbf{w})\subseteq \lrideal{\Tn}{S\cup I}$, we have 
$\ideal{\lt_{\mathbf{u}}(S)\cup \lt_{\mathbf{u}}(d_h(I^\mathbf{w}))}\subseteq \ideal{\lt_\mathbf{u}(\lrideal{\Tn}{S\cup I})}$.
For the converse inclusion consider a $\mathbf{u}$-homogeneous $p\in \lrideal{\Tn}{\lt_\mathbf{u}(\lrideal{\Tn}{S\cup I})}$. Then there is a $p'\in \Tn$ such that $p+p' \in \lrideal{\Tn}{S\cup I} $ and $\deg_\mathbf{u}(p')<\deg_\mathbf{u}(p)$.
Using relations in $S$, we may assume $p,p'\in \Pn$.
Now we find $l,l',l''\in \NN$ such that $h^{l''}h_\mathbf{w}(p+p')=h^lh_\mathbf{w}(p)+h^{l'}h_\mathbf{w}(p')\in \lrideal{\K\langle h,\ul x\rangle}{S^{\mathbf{w}}\cup I^\mathbf{w}}$. 
By \Cref{888} we can write
\begin{equation}\label{889}
 h^{l''}h_\mathbf{w}(p+p')=\sum_{g\in I^\mathbf{w}} a_g g+\sum_{(t,s,t')\in U} tst'
 \end{equation}
for some $(1,\mathbf{w})[(\deg_{(1,\mathbf{w})}(g))_{g\in I^\mathbf{w}}]$-homogeneous $a\in \K[ h,\ul x]^{I^\mathbf{w}}$ and some finite set $U\subseteq \K\langle h,\ul x\rangle\setminus\{0\}\times S^{\mathbf{w}}\times \K\langle h,\ul x\rangle\setminus\{0\}$ such that
\[
 \lE(a_g)+\lE(g)\ (\preceq_{\mathbf{u}})^{\mathbf{w}} \lE(h^{l''}h_\mathbf{w}(p+p')) 
\]
and 
\[
 \lE(t)+\lE(s)+\lE(t')\ (\preceq_{\mathbf{u}})^{\mathbf{w}} \lE(h^{l''}h_\mathbf{w}(p+p'))
\]
with equality for some $g\in I^\mathbf{w}$.
We may assume that $t$ and $t'$ are $(1,\mathbf{w})$-homogeneous for all $(t,s,t')\in U$ and that all terms appearing in \Cref{889} are $(1,\mathbf{w})$-homogeneous of the same degree.
Dehomogenizing we obtain (see \Cref{326}\eqref{326a})
\[
p+p'=\sum_{g\in I^\mathbf{w}} d_h(a_g) d_h(g)+\sum_{(t,s,t')\in U} d_h(t)d_h(s)d_h(t')
\]
with
\begin{equation}\label{911}
 \lE(d_h(a_g))+\lE(d_h(g))\preceq_{\mathbf{u}} \lE(p+p')=\lE(p)
\end{equation}
and
\[
 \lE(d_h(t))+\lE(d_h(s))+\lE(d_h(t'))\preceq_{\mathbf{u}} \lE(p+p')=\lE(p)
\]
with equality for some $g\in I^\mathbf{w}$.
By definition of $\prec_\mathbf{u}$ there are corresponding inequalities for the $\mathbf{u}$-degree of the elements involved. By $\mathbf{u}$-homogeneity of $p$ there are ${I^\mathbf{w}}'\subseteq I^\mathbf{w}$ and $U'\subseteq U$ such that 
\[
p=\sum_{g\in {I^\mathbf{w}}'} \lt_\mathbf{u}(d_h(a_g)) \lt_\mathbf{u}(d_h(g))+\sum_{(t,s,t')\in U'}\lt_\mathbf{u}( d_h(t))\lt_\mathbf{u}(d_h(s))\lt_\mathbf{u}(d_h(t')).
\]
Hence $p\in \ideal{\Tn}{\lt_{\mathbf{u}}(S)\cup \lt_{\mathbf{u}}(d_h(I^\mathbf{w}))}$ proving the first equality.

By \Cref{837} and the $\mathbf{u}$-homogeneity of $\lrideal{\Tn}{\lt_\mathbf{u}(\lrideal{\Tn}{S\cup I})}$ it suffices to show that $\lE_{\prec}(p)\in L_{\prec}( \lt_{\mathbf{u}}(d_h(I^\mathbf{w})))$ to obtain the second equality.
To this end note that $\lE_{\prec_\mathbf{u}}(r)=\lE_{\prec_\mathbf{u}}(\lt_\mathbf{u}(r))=\lE_{\prec} (\lt_\mathbf{u}(r)) $ holds for $r\in \Pn$ and thus $\lE_{\prec}(p)=\lE_{\prec_\mathbf{u}}(p)$ by $\mathbf{u}$-homogeneity of $p$.
Choosing $g\in I^\mathbf{w}$ with equality in \Cref{911}, we obtain 
$$\lE_{\prec}(p)=\lE_{\prec}(\lt_\mathbf{u}(d_h(a_g)))+\lE_{\prec}(\lt_\mathbf{u}(d_h(g)))\in L_{\prec}( \lt_{\mathbf{u}}(d_h(I^\mathbf{w}))).$$

\item Follows by similar arguments as in Part~\eqref{887a}.

\item Consider $t\in \Pn^E$ with $0\neq \ol t\in \gr^{\mathbf{u}[\mathbf{s}]}\!M\subseteq \Tn^E/\ideal{\lt_{\mathbf{u}}(S)^E\cup \lt_{\mathbf{u}}(d_h(I^\mathbf{w}))^E}$. As that module is $\mathbf{u}[\mathbf{s}]$-graded and the ordering ${\prec}^E$ is transitive, 
we reduce to the case that $t$ is $\mathbf{u}[\mathbf{s}]$-homogeneous. Hence there exists $t'\in \Pn^E$ with $\deg_{\mathbf{u}[\mathbf{s}]}(t')<\deg_{\mathbf{u}[\mathbf{s}]}(t)$ such that $\ol{t+t'}\in M$. 
Using the Gröbner basis $G$, there are coefficients $a\in \Pn^G$ for a standard representation
\[
 \ol{t+t'}=\sum_{g\in G} \ol{a_g}\cdot \ol g\in M\quad
\text{with}\quad
 \lE_{({\prec}^E_{\mathbf{u}[\mathbf{s}]})_{\lcomp(g)}}(a_g)+\lE_{{\prec}^E_{\mathbf{u}[\mathbf{s}]}}(g)\preceq^E_{\mathbf{u}[\mathbf{s}]} \lE_{{\prec}^E_{\mathbf{u}[\mathbf{s}]}}(t+t')= \lE_{{\prec}^E_{\mathbf{u}[\mathbf{s}]}}(t).
\]
There is corresponding inequality of $\mathbf{u}[\mathbf{s}]$-degrees and we set $G':=\{g\in G\mid \deg_\mathbf{u}({a_g})+\deg_{\mathbf{u}[\mathbf{s}]}( g)=\deg_{\mathbf{u}[\mathbf{s}]}( t) \}$.
Then 
\[
 \ol{t}=\sum_{g\in G'} \ol{\lt_{\mathbf{u}}(a_g)}\cdot \ol{\lt_{\mathbf{u}[\mathbf{s}]}(g)} \in \gr^{\mathbf{u}[\mathbf{s}]}\! M
\]
and for $g\in G'$
\[
 \lE_{\prec^E_{\lcomp(g)}}(\lt_{\mathbf{u}}(a_g))+\lE_{\prec^E}(\lt_{\mathbf{u}[\mathbf{s}]}(g))\preceq^E \lE_{\prec^E}(\lt_{\mathbf{u}[\mathbf{s}]}(t+t'))= \lE_{\prec^E}(\lt_{\mathbf{u}[\mathbf{s}]}(t)).
\]

\item Using \Cref{840}, the claim follows from Parts~\ref{887a} and \ref{887c}.\qedhere

\end{enumerate}
\end{proof}


\begin{cor}\label{424}
If $A$ is a PBW-algebra and $\mathbf{u}\in \ZZ^n$ a weight vector on $A$, then $\gr^\mathbf{u} A$ is also a PBW-algebra. 
\end{cor}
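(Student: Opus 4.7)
The plan is to combine \Cref{1000}, \Cref{394}, \Cref{845}, and \Cref{887}\eqref{887a} in sequence. By \Cref{1000}, a PBW-algebra is precisely a PBW-reduction-algebra $A=(\Tn,S,\{0\},\prec)$; so the task is to exhibit $\gr^\mathbf{u}\!A$ as a PBW-reduction-algebra whose $I$-part can be chosen to be $\{0\}$.

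First, I would apply \Cref{394} to pick a strictly positive weight vector $\mathbf{w}\in \NN_{>0}^n$ on $A$ and form the homogenization $A^\mathbf{w}$. The key observation is the ``in particular'' clause of \Cref{845}: since $A$ is a PBW-algebra, the homogenization $A^{\mathbf{w}}$ is again a PBW-algebra, so we may take a PBW-reduction datum $A^{\mathbf{w}}=(\K\langle h,\ul x\rangle,S^{\mathbf{w}},\{0\},(\prec_\mathbf{u})^{\mathbf{w}})$, i.e., $I^\mathbf{w}=\{0\}$ with $(1,\mathbf{w})$-homogeneous elements.

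Next, I would feed this datum into \Cref{887}\eqref{887a}, which yields the identification
\[
\gr^\mathbf{u}\!A=(\Tn,\lt_{\mathbf{u}}(S),\lt_{\mathbf{u}}(d_h(I^\mathbf{w})),\prec).
\]
Since $I^\mathbf{w}=\{0\}$, also $\lt_{\mathbf{u}}(d_h(I^\mathbf{w}))=\{0\}$, so
\[
\gr^\mathbf{u}\!A=(\Tn,\lt_{\mathbf{u}}(S),\{0\},\prec).
\]
Applying \Cref{1000} in the reverse direction finishes the proof.

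I do not expect any real obstacle: all the hard work has already been done in setting up the homogenization machinery (\Cref{845}) and in identifying the associated graded of a PBW-reduction-algebra (\Cref{887}\eqref{887a}). The only thing to verify is that the specialization $I=\{0\}$ propagates through the construction, which is immediate from the ``in particular'' statement of \Cref{845}.
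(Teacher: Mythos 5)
Your proposal is correct and follows exactly the paper's own route: invoke \Cref{394} to obtain a strictly positive weight vector $\mathbf{w}$, use the ``in particular'' clause of \Cref{845} to see that $A^{\mathbf{w}}$ is again a PBW-algebra (so $I^{\mathbf{w}}$ may be taken to be $\{0\}$), and conclude via \Cref{887}.\ref{887a}. The explicit bookkeeping with \Cref{1000} at both ends is just a spelled-out version of what the paper leaves implicit.
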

 
\begin{proof}
By \Cref{394} the exists a weight vector $\mathbf w\in \NN^n_{>0}$ on $A$ and \Cref{845} implies that $A^{\mathbf{w}}$ is a PBW-algebra. Now the claim is due to \Cref{887}.\ref{887a}.
\end{proof}

 
\begin{cor}\label{902}
Consider the $\K$-algebra $\KK\langle \ul x,\ul y\rangle:=\KK\langle x_1, \allowbreak \dots,\allowbreak x_n,\allowbreak y_1,\dots,y_m\rangle$, the elementary PBW-reduction-algebra $A=(\KK\langle \ul x,\ul y\rangle, S ,I,\prec)$ and the weight vector $\mathbf{u}\in \ZZ^{n+m}$. Then $\gr^\mathbf{u} A$ is an elementary PBW-reduction-algebra.
In addition, if $I'$ is a Gröbner basis of $\lideal{\KK[\ul x]}{I}$ with respect to the ordering $\prec_\mathbf{u}$ on $A$, then 
\[
 \gr^\mathbf{u} A= (\KK\langle \ul x,\ul y\rangle,\lt_\mathbf{u}(S),\lt_\mathbf{u}( I' ),\prec).
\]
\end{cor}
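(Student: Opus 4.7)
The plan is as follows. First, I would identify $\gr^\mathbf{u}\!A$ as a $\K$-vector space using the elementary structure of $A$, and then verify that the candidate right-hand side is an elementary PBW-reduction datum presenting this vector space via \Cref{812}.

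For the vector-space identification, the elementary decomposition $A=\bigoplus_{\beta\in \NN^m}(\K[\ul x]/\ideal{I})\ul y^\beta$ (from \Cref{923} and \Cref{812}) is compatible with the $\mathbf{u}$-filtration in the sense that
\[
 F^\mathbf{u}_k A=\bigoplus_\beta F^{\mathbf{u}|_{\ul x}}_{k-\langle \mathbf{u}|_{\ul y},\beta\rangle}(\K[\ul x]/\ideal{I})\ul y^\beta.
\]
This uses the weight-vector hypothesis crucially: the commutation relations in $S$ allow one to rewrite any representative of $a\in A$ in the standard form $\sum_\beta p_\beta\ul y^\beta$ with $p_\beta\in \K[\ul x]$ without increasing $\mathbf{u}$-degree, since $\deg_\mathbf{u}(f_{ik})\le \deg_\mathbf{u}(y_kx_i)$ and $\deg_\mathbf{u}(d_{kl})\le \deg_\mathbf{u}(y_ly_k)$. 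Taking associated graded yields
\[
 \gr^\mathbf{u}\!A=\bigoplus_\beta \gr^{\mathbf{u}|_{\ul x}}(\K[\ul x]/\ideal{I})\ul y^\beta = \bigoplus_\beta (\K[\ul x]/\ideal{\lt_\mathbf{u}(I')})\ul y^\beta,
\]
invoking the standard commutative fact $\gr^{\mathbf{u}}(\K[\ul x]/\ideal{I})=\K[\ul x]/\ideal{\lt_\mathbf{u}(I')}$ for a Gröbner basis $I'$ of $\ideal{I}$ with respect to an ordering refining the $\mathbf{u}$-weight.

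Next I would verify the three conditions of \Cref{812} for $(\K\langle \ul x,\ul y\rangle,\lt_\mathbf{u}(S),\lt_\mathbf{u}(I'),\prec)$. The commutation system $\lt_\mathbf{u}(S)$ retains the required elementary form because $\mathbf{u}$ is a weight vector on $A$, so $\lt_\mathbf{u}([y_l,y_k]-d_{kl})$ and $\lt_\mathbf{u}([y_k,x_i]-f_{ik})$ are again commutators (possibly pure) with polynomial remainders in $\K[\ul x]$. The set $\lt_\mathbf{u}(I')\subseteq \K[\ul x]$ is a commutative Gröbner basis of $\ideal{\lt_\mathbf{u}(I')}$ with respect to $\prec$, by the standard identity $\operatorname{in}_\prec\operatorname{in}_\mathbf{u}=\operatorname{in}_{\prec_\mathbf{u}}$ for ideals of $\K[\ul x]$. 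Finally, injectivity of the canonical surjection $\psi:\bigoplus_\beta(\K[\ul x]/\ideal{\lt_\mathbf{u}(I')})\ul y^\beta\to \K\langle \ul x,\ul y\rangle/\ideal{\lt_\mathbf{u}(S)\cup \lt_\mathbf{u}(I')}$ follows by composing with the tautological surjection $\pi$ to $\gr^\mathbf{u}\!A$ (which records the $\mathbf{u}$-leading parts of the defining relations of $A$) and invoking the first step: the composition $\pi\circ\psi$ matches the identity on the direct-sum description of $\gr^\mathbf{u}\!A$, forcing both $\pi$ and $\psi$ to be bijections. Thus \Cref{812} yields that $(\K\langle \ul x,\ul y\rangle,\lt_\mathbf{u}(S),\lt_\mathbf{u}(I'),\prec)$ is an elementary PBW-reduction datum presenting $\gr^\mathbf{u}\!A$.

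The main difficulty lies in the first step: verifying that the $\mathbf{u}$-filtration respects the additive elementary decomposition of $A$, which is not compatible with multiplication. The weight-vector hypothesis is precisely what guarantees the additive compatibility via the reduction to standard form, after which the remainder reduces to routine commutative Gröbner basis bookkeeping.
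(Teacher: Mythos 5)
Your proof is correct, but it takes a genuinely different route from the paper's. The paper deduces the result from the general homogenization machinery: it invokes \Cref{875} to get a PBW-reduction datum for $A^{\mathbf{w}}$ with respect to an auxiliary strictly positive weight $\mathbf{w}$, applies \Cref{887}.\ref{887a} to obtain $\gr^{\mathbf{u}}\!A=(\KK\langle \ul x,\ul y\rangle,\lt_{\mathbf{u}}(S),\lt_{\mathbf{u}}(d_h(I^{\mathbf{w}})),\prec)$, and then replaces $d_h(I^{\mathbf{w}})$ by $I'$ via a comparison of leading exponents (both being Gröbner bases of $\lideal{\KK[\ul x]}{I}$ with respect to $\prec_{\mathbf{u}}$) together with \Cref{888}. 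You instead bypass homogenization entirely by exploiting the elementary structure: the direct-sum decomposition $A\cong\bigoplus_\beta(\KK[\ul x]/\ideal I)\ul y^\beta$ is compatible with $F^{\mathbf{u}}_\bullet$ (this is essentially \Cref{300} read through the isomorphism of \Cref{812}, in the spirit of \Cref{885}), so $\gr^{\mathbf{u}}\!A$ is computed componentwise by the standard commutative facts $\gr^{\mathbf{u}}(\KK[\ul x]/\ideal I)=\KK[\ul x]/\ideal{\lt_{\mathbf{u}}(I')}$ and $\operatorname{in}_\prec\operatorname{in}_{\mathbf{u}}=\operatorname{in}_{\prec_{\mathbf{u}}}$, after which the injectivity criterion of \Cref{812} is verified by the sandwich argument $\pi\circ\psi=\mathrm{id}$. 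Your approach is more self-contained and makes the "elementary" part of the conclusion transparent (the paper's appeal to \Cref{887}.\ref{887a} for elementarity is somewhat terse on exactly this point, namely the injectivity of the graded $\psi$), but it is specific to elementary algebras, whereas the paper's route reuses \Cref{887}, which is needed anyway for general PBW-reduction-algebras. One small point to make explicit if you write this up: the compatibility $F^{\mathbf{u}}_kA=\bigoplus_\beta F^{\mathbf{u}|_{\ul x}}_{k-\langle\mathbf{u}|_{\ul y},\beta\rangle}(\KK[\ul x]/\ideal I)\ul y^\beta$ uses the injectivity of the original $\psi$ to identify the components of a rewritten representative, not just the weight-vector inequality.
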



\begin{proof}
Let $\mathbf{w}\in \NN^{n+m}$ be a weight vector on $A$ and $I^\mathbf{w}\subseteq \KK[h,\ul x]$ a $(1,\mathbf{w})$-homogeneous 
 Gröbner basis of $\ideal{h_\mathbf{w}(I)}\subseteq \K[h,\ul x]$ with respect to the ordering induced by $(\prec_\mathbf{u})^{\mathbf{w}}$ on $\K[h,\ul x]$.
Then \Cref{875} implies 
that 
\[
A^{\mathbf{w}}=(\K\langle h,\ul x,\ul y\rangle, \allowbreak S^{\mathbf{w}},I^\mathbf{w},(\prec_\mathbf{u})^{\mathbf{w}})
\]
According to \Cref{887}.\ref{887a} it follows that $\gr^\mathbf{u}\! A=(\K\langle \ul x,\ul y\rangle, \allowbreak \lt_\mathbf{u}(S),\lt_\mathbf{u}(d_h(I^\mathbf{w})),\prec)$ is an elementary PBW-reduction-algebra. By hypothesis and \Cref{327} both $I'$ and $d_h(I^\mathbf{w})$ are a Gröbner basis of 
 $\lideal{\K[\ul x]}{I}$ with respect to the ordering induced by $\prec_\mathbf{u}$. It follows that $$L_{\prec}(\lt_\mathbf{u}(I'))=L_{\prec}(\lt_\mathbf{u}(d_h(I^\mathbf{w})) )=\operatorname{l}_{\prec}(\ideal{ \lt_\mathbf{u}(d_h(I^\mathbf{w})) \cup \lt_\mathbf{u}(S) } \cap \K[\ul x,\ul y]), $$
 where the second equality follows from the above PBW-reduction datum of $\gr^\mathbf{u}\! A$, and 
 $$\lt_\mathbf{u}(I')\subseteq \lt_\mathbf{u}(\lideal{\K[\ul x]}{I})=\lideal{\K[\ul x]}{\lt_\mathbf{u}(d_h(I^\mathbf{w}))}\subseteq \lrideal{\K\langle \ul x,\ul y\rangle }{ \lt_\mathbf{u}(d_h(I^\mathbf{w})) \cup \lt_\mathbf{u}(S) }. $$
 The additional claim is now due to \Cref{888}. 
\end{proof}
 
 
\begin{exa}\label{903}\
\begin{enumerate}
 \item\label{903a} Consider the elementary PBW-reduction-algebra $T_X$ introduced in \Cref{577}.\ref{577a} and its weight vector $\mathbf{w}=((0)_{1\leq i\leq n},(1)_{1\leq i\leq m})$. Then 
$\gr^\mathbf{w} T_X=(\CC\langle \ul x,\ul y\rangle, \lt_\mathbf{w}(S),I_\mathbf{w} ,\prec)$ is also elementary with $\lt_\mathbf{w}(S)=\{[x_j,x_i],\allowbreak [y_l,y_k], [y_k,x_i]\mid 1\leq i\leq j\leq n, 1\leq k\leq l\leq m\}\setminus \{0\}$, where $I_\mathbf{w}$ is a Gröbner basis of $I $ with respect to the ordering induced by $\prec$ 
on $\CC[\ul x]$,
since $\lt_\mathbf{w}(I )=I $ (see \Cref{902}).
In particular, $\gr^\mathbf{w} T_X$ is a quotient algebra of the polynomial ring $\CC[\ul x,\ul y]$. and every ordering on it is computable.
\item\label{903b} We have an analogous result as in Part~\ref{903a} for the elementary PBW-reduction-algebra $T_X^V=(\CC\langle \ul x,y_1,\dots y_{m-1},z\rangle, S_V,J,\prec')$ and the weight vector $\mathbf{w}_\mathbf{v}=((0)_{1\leq i\leq n},(1)_{1\leq i\leq m})$
in the situation of \Cref{577}.\ref{577c}: Arguing as above, we have $\gr^{\mathbf{w}_\mathbf{v}} T_X^V=(\CC\langle \ul x,\ul y\rangle, \lt_\mathbf{w}(S_V),J_\mathbf{w} ,\prec')$, where $J_\mathbf{w}$ is a Gröbner basis of $\lideal{\K[\ul x]}{J}$ with respect to the ordering induced by $\prec'$ 
on $\CC[\ul x]$.
\end{enumerate}
\end{exa}
 

\textit{ In algorithms we use the symbol $\triangleright$ to mark comments.} 

 
\begin{algorithmbis}[Given a weight vector $\mathbf{u}$ on $A$ and an $A$-submodule $M$ of a free $A$-module, this algorithm computes $\gr^{\mathbf{u}}\! A$.]
\label[algorithm]{892A}
\begin{algorithmic}[1]
\REQUIRE {A weight vector $\mathbf{u}\in \ZZ^n$ on $A$ 
such that $\prec_{\mathbf{u}}$ is computable.}
\ENSURE A PBW-reduction datum $(\Tn,\lt_\mathbf{u}(S),I_\mathbf{u},\prec)$ of $\gr^{\mathbf{u}}\!A$ and a finite set $G\subseteq \Tn^E$ of $\mathbf{u}[\mathbf{s}]$-homogeneous elements whose residue classes form a set of $\gr^{\mathbf{u}}\!A$-generators of $\gr^{\mathbf{u}[\mathbf{s}]}\!M\subseteq \Tn^E/\ideal{\lt_\mathbf{u}(S)^E\cup I_\mathbf{u}^E}$.

\IF[Use \Cref{887}.\ref{887a}]{$\prec_\mathbf{u}$ is a non-well-ordering}
\STATE Find a weight vector $\mathbf{w}\in \NN^n_{>0}$ such that a PBW-reduction datum $A^{\mathbf{w}}=(\Tn,S^\mathbf{w},I^\mathbf{w}, {(\prec_{\mathbf{u}})^{\mathbf{w}}})$ is computable. 
\STATE Replace $I^\mathbf{w}$ by the set of the $(1,\mathbf{w})$-homogeneous parts of its elements.
\STATE Set $I':=d_h(I^\mathbf{w})$.
\ELSE
\STATE Compute a PBW-reduction datum $(\Tn,S,I',\prec_\mathbf{u})$ of $A$.
\ENDIF
\RETURN $(\Tn,\lt_\mathbf{u}(S),\lt_{\mathbf{u}[\mathbf{s}]}(I'),\prec)$.
\end{algorithmic}
\end{algorithmbis} 
 

\begin{algorithmbis}[Given a weight vector $\mathbf{u}$ on $A$ and an $A$-submodule $M$ of a free $A$-module, this algorithm computes $\gr^{\mathbf{u}\bracket{s}}\! M$.]
\label[algorithm]{892B}
\begin{algorithmic}[1]
\REQUIRE {A weight vector $\mathbf{u}\in \ZZ^n$ on $A$ such that $\prec_u$ is computable, a finite set $E$, an $A$-module $M=\lideal{A}{\ol{M'}}\subseteq A^E$ with $M'\subseteq \Tn^E$ finite and a shift vector $\mathbf{s}\in \ZZ^E$.
}
\ENSURE A finite set $G\subseteq \Tn^E$ of $\mathbf{u}[\mathbf{s}]$-homogeneous elements whose residue classes form a set of $\gr^{\mathbf{u}}\!A$-generators of $\gr^{\mathbf{u}[\mathbf{s}]}\!M\subseteq \Tn^E/\ideal{\lt_\mathbf{u}(\ideal{I\cup S})^E}$.
\STATE Compute a finite set $G\subseteq \Tn^E$ inducing a Gröbner basis of $M$ with respect to an ordering of type $(<^E,\prec)_{\mathbf{u}[\mathbf{s}]}$ by \Cref{370}.
\STATE Set $G:=\lt_{\mathbf{u}[\mathbf{s}]}(G)$.
\RETURN $(\Tn,\lt_\mathbf{u}(S),\lt_{\mathbf{u}[\mathbf{s}]}(I'),\prec)$ and $G$.
\end{algorithmic}
\end{algorithmbis}

\section{Interplay of weight filtrations and submodule structures of a free module over the PBW-reduction-algebra $A$}\label{397}
 
In this \namecref{397}, we consider two weight vectors $\mathbf{v}$ and $\mathbf{w}\in \ZZ^n$ on the PBW-reduction-algebra $A=(\Tn,S,I,\prec)$, which play the role of the $V$- and the order filtration. We impose certain assumptions that are motivated by Hodge theory. In particular, we assume that $\mathbf{v}$ is a \emph{$\mathbf{w}$-weight} on $A$, that is, $$F_0^\mathbf{w}A\subseteq F_0^\mathbf{v} A.$$ 
We study the interplay of the induced weight filtrations on free $A$-modules with $F^\mathbf{v}_0 A$- and $F^\mathbf{w}_0 A$-submodule structures: 
Given a finite set $E$ and $V',W'\subseteq \Tn^E$ finite subsets, the subjects of our investigation are the submodules 
\[
V:=\lideal{F_0^{\mathbf{v}}A}{\ol{V'}}\subseteq A^E \text{ and } W:=\lideal{F_0^{\mathbf{w}}A}
{\ol{W'}}\subseteq A^E.
\]
To simplify notation, we assume that $\ol v=\ol{v'}\in A^E$ for $v,v'\in V'$ implies $v=v'$ (and similarly for $W'$). 
For our algorithmic approach we need the following additional assumptions:


\begin{ass}\label{884}\
\begin{enumerate}
 \item\label{884a} We can determine a computable ordering of type $\prec'_\mathbf{v}$ on $A$. 
 
 \item\label{884b} We can compute a PBW-reduction-datum for $F_0^\mathbf{v} A$. More precisely, we can determine the kernel $K_\mathbf{v}$ of the surjective $\K$-algebra map (see \Cref{d3}.\ref{d3a})
 \[
 \phi_\mathbf{v}: A_\mathbf{v}:=\KK\langle \{y_g\mid g\in G_A^\mathbf{v}\}\rangle\to F_0^\mathbf{v}A, \; y_g\mapsto g
 \]
and a PBW-reduction datum for $A_\mathbf{v}/K_\mathbf{v}$ is computable. 

\item\label{884d} Under the assumptions of Part~\ref{884b}, assume in addition that 
 the filtration $F_\bullet^\mathbf{w}$ induced by $F_\bullet^\mathbf{w}F_0^\mathbf{v} A$ on $A_\mathbf{v}/K_\mathbf{v}$ is given by a weight vector $\mathbf{w}_\mathbf{v}$ on 
$A_\mathbf{v}/K_\mathbf{v}$ and that we can determine a computable ordering of type $\prec_{\mathbf{w}_\mathbf{v}}''$ on $A_\mathbf{v}/K_\mathbf{v}$.

\item\label{884c} For any integer $d\in \ZZ$ we can determine $\mathbf{t}_d\in \ZZ^{P_d^{A,\mathbf{v}}}$ such that $F_\bullet^\mathbf{w} F_d^\mathbf{v} A=\sum_{p\in P_d^{A,\mathbf{v}}}F^\mathbf{w}_{\bullet-(\mathbf{t}_d)_p} F_0^\mathbf{v} A\cdot \ol p$ (see \Cref{d3}.\ref{d3b}).

\item\label{884e} We have $F^\mathbf{v}_0F^\mathbf{w}_\bullet A=({F^\mathbf{v}_0 \Tn\cap F^\mathbf{w}_\bullet \Tn\cap \Pn})+\ideal{I\cup S})/\ideal{I\cup S}$.

\item\label{884f}We can determine a computable ordering of type $\prec_{\mathbf{w}}'''$ for some well-ordering $\prec'''$ on $A$. 
\end{enumerate}
\end{ass}


Note that \Cref{885}.\ref{885b} states a sufficient condition for \Cref{884}.\ref{884c}.


\begin{rem}\label{901}
Given a PBW-reduction datum of $A$ the following Gröbner basics for $A$-modules can be computed based on \Cref{829}: Gröbner bases with respect to $\prec$, module membership, intersections, and projections and syzygies (see \Cref{1001} and \Cref{834}). Moreover, using \Cref{884} we can solve the following problems:
 \begin{enumerate}
 
 \item\label{901a} \Cref{884}.\ref{884a} enables us to compute generators of the filtration $F_\bullet^\mathbf{v} M$ for an $A$-submodule $M$ of a free $A$-module. So in particular, we can determine $F_0^\mathbf{v} A$-generators of $F_k^\mathbf{v} M$ for $k\in \ZZ$.
 
 \item\label{901b} \Cref{884}.\ref{884b} ensures that we can perform the above listed Gröbner basics also over the ring $F_0^\mathbf{v} A$.
 
 \item\label{901c} A set of $F_\bullet^\mathbf{w}F_0^\mathbf{v}A$-generators of the filtration induced by $F_\bullet^\mathbf{w}A$ on $F_0^\mathbf{v}A$-submodules of free $F_0^\mathbf{v}A$-modules is computable by \Cref{884}\eqref{884d}. Similarly, we will see that \Cref{884}.\ref{884e} allows us to solve the corresponding problem for $F_0^\mathbf{v}A$-submodules of free $A$-modules. 
 
 \item\label{901d} 
 A computable ordering of type $\prec'''_\mathbf{w}$ on $A$ as in \Cref{884}.\ref{884f} enables us to realize the algebra $\gr^\mathbf{w} A$ as PBW-reduction-algebra by \Cref{892A}.
 
 \end{enumerate}
\end{rem}


The objective of this \namecref{397} is to treat the following problems:


\begin{prb}\label{399}\
\begin{enumerate}

 \item\label{399a} Module membership problem: Decide for $a\in A^E$ if $a\in V$ under \Cref{884}.\ref{884a} and \ref{884b}. 
 
 \item\label{399b} Find generators of the $F_0^\mathbf{w}A$-module $V\cap W$ under \Cref{884}.\ref{884a}-\ref{884d}. 
 
 \item\label{399c} Given that a set as in \Cref{884}.\ref{884c}, show that $V\cap F^\mathbf{w}[\mathbf{s}]_\bullet A^E$ is a well-filtered $F^\mathbf{w}_\bullet F^\mathbf{v}_0A$-module. Compute a corresponding generating set under \Cref{884}.\ref{884a}-\ref{884c}.

 \item\label{399d} Under \Cref{884} show that $\mathbf{v}$ is a weight vector on the PBW-reduction-algebra $\gr^{\mathbf{w}}A$ and represent $\gr^{\mathbf{w}[\mathbf{s}]} V$ as $F^\mathbf{v}_0 \gr^{\mathbf{w}}A$-module. 

\end{enumerate}
\end{prb}


\begin{rem}\label{439} 
In case $\mathbf{v}=(0)_{1\leq i\leq n}$, $F^{\mathbf{v}}_0A=A$ and \Cref{399}.\ref{399b} deals with the intersection of an $A$-submodule $M$ of $A^E$ with a finitely generated $F^\mathbf{u}_0 A$-submodule of $A^E$.
\end{rem}


\begin{exa}\label{14}
With regard to our applications to Hodge theory, we are particularly interested in the situation of \Cref{577} in the case \[
 \mathbf{v}=((-\delta_{n,i})_{1\leq i\leq n},(\delta_{m,i})_{1\leq i\leq m})\in \ZZ^{n+m} \text{ and } \mathbf{w}=((0)_{1\leq i\leq n},(1)_{1\leq i\leq m})\in \ZZ^{n+m} 
\]
under the condition that $x_n$ is a local coordinate (see \Cref{577}.\ref{577c}). In this case, $F^\mathbf{v}_\bullet T_X$ is the so-called $V$-filtration on $D_X(X)$ with respect to the divisor $\{x_n=0\}$ and $F^\mathbf{w}_\bullet A$ is the filtration with respect to the order of differential operators on $D_X(X)$. 

Note that we can indeed determine a PBW-reduction datum for $T_X$ by \Cref{577}.\ref{577a}. Moreover \Cref{884} is satisfied: Part~\ref{884a} follows by \Cref{394} and \Cref{875}. For Part~\ref{884b} recall that $F_0^\mathbf{v} T_X$ is isomorphic to the elementary PBW-reduction-algebra $T_X^V$ by \Cref{577}.\ref{577c}. By \Cref{812} a corresponding PBW-reduction datum can be computed.
By \Cref{313} we know that $\mathbf{w}$ induces the weight vector $\mathbf{w}_\mathbf{v}=((0)_{1\leq i\leq n},(1)_{1\leq i\leq m})$ on $T_X^V$. Again by \Cref{812} this show that Part~\ref{884d} is satisfied.
With $P_d^{T_X,\mathbf{v}}$ as in \namecref{313}, that \Cref{885}.\ref{885a} and \ref{885b} yields 
\[
 F_\bullet^\mathbf{w}F_d^\mathbf{v}T_X=\begin{cases}
 F^\mathbf{w}_\bullet F_0^\mathbf{v} T_X\cdot \ol{x_n^d}& \text{if } d\leq 0,\\
 \sum_{0\leq l\leq d}F^\mathbf{w}_{\bullet-l} F_0^\mathbf{v} T_X \cdot \ol{y_m^l}& \text{otherwise,}
 \end{cases}
\]
and Part~\ref{884c} is satisfied. \Cref{885}.\ref{885a} shows that also Part~\ref{884e} holds in this situation. Finally Part~\ref{884f} is an immediate consequence of \Cref{812}.
\end{exa}


Part of the difficulty of the above problems is due to module structures over different subrings in the chain of non-finite ring extensions
$F^\mathbf{w}_0A\subseteq F^\mathbf{v}_0A \subseteq A$.

\subsection{A one-to-one correspondence for $F^\mathbf{w}_0A$-submodules of bounded $\mathbf{v}$-degree of a free $A$-module}\label{217} 

Thus we first reduce to a problem involving only the PBW-reduction-algebra $F^\mathbf{v}_0A$ and its subalgebra $F^\mathbf{w}_0A$. To this end we consider $F^\mathbf{v}_0A$- and $F^\mathbf{w}_0A$-submodules of $A^E$ of $\mathbf{v}$-degree bounded by $d$ and lift them to a presentation of the $F^\mathbf{v}_0 A$-module $F^\mathbf{v}_d A^E$. 


\begin{rem}\label{344} 
The inclusion $F^\mathbf{w}_0 A\subseteq F^\mathbf{v}_0 A$ implies that for any finite set $N'\subseteq A^E$ 
\[
 \deg_{\mathbf{v}}(\lideal{F^\mathbf{v}_0A}{N'})= \deg_{\mathbf{v}}(\lideal{F^\mathbf{w}_0A}{N'})=\deg_{\mathbf{v}}(N')<\infty.
\]
\end{rem}


To construct the above presentation take $F^\mathbf{v}_0A$-generators $P^{A,\mathbf{v}}_d$ of $F^\mathbf{v}_dA$ (see \Cref{d3}\eqref{d3b}) and consider the $F^\mathbf{v}_0 A$-linear surjective map 
 \begin{equation}\label{62}
 \omega_{\mathbf{v},d}: F^\mathbf{v}_0A^{P^{A,\mathbf{v}}_d}\to F^\mathbf{v}_dA,\; q\mapsto \sum_{p\in P^{A,\mathbf{v}}_d} q_p\ol p.
\end{equation}
Consider $F^\mathbf{v}_0A$-generators $K_{\omega_{\mathbf{v},d}}$ of $\ker(\omega_{\mathbf{v},d})=\syz_A(\ol{P^{A,\mathbf{v}}_d})\cap F^\mathbf{v}_0A^{P^{A,\mathbf{v}}_d}$ as can be computed by \Cref{340} under \Cref{884}.\ref{884a}.
For every $a\in F^\mathbf{v}_dA$ fix a representation
\begin{equation}\label{218}
 a=\sum_{p\in P^{A,\mathbf{v}}_d} q^a_p \ol p \text{ with } q^a\in F^\mathbf{v}_0A^{P^{A,\mathbf{v}}_d},
\end{equation}
computable by \Cref{345},
to define a right inverse map of $ \omega_{\mathbf{v},d}$ 
\begin{equation}\label{61}
\upsilon_{\mathbf{v},d}: F^\mathbf{v}_dA\to F^\mathbf{v}_0A^{P^{A,\mathbf{v}}_d},\; a\mapsto q^a.
\end{equation}
 
The following one-to-one correspondence is now an immediate consequence of the homomorphism theorem:


\begin{lem}\label{343} Let $d\in \ZZ$.
There is an inclusion-, intersection- and sum-preserving one-to-one correspondence
\begin{align*}
\{F^\mathbf{w}_0A\text{-modules }K\subseteq (F^\mathbf{v}_0A^{P^{A,\mathbf{v}}_d})^E\mid \ker(\omega_{\mathbf{v},d}^E)\subseteq K\} &\leftrightarrow \{F^\mathbf{w}_0A\text{-modules }J\subseteq F^\mathbf{v}_dA^E \} \\
 \Omega_{\mathbf{v},d}^E:\;\;\;\; K & \mapsto \omega_{\mathbf{v},d}^E(K) \\
 \upsilon_{\mathbf{v},d}^E(J)+\ker(\omega_{\mathbf{v},d}^E) & \mapsfrom J \;\;\;\;: Y_{\mathbf{v},d}^E.
\end{align*}
It identifies $F^\mathbf{v}_0A$-modules on both sides. 
Moreover, if $K' \subseteq F^\mathbf{v}_dA^E$ and $\mathbf{u}\in\{\mathbf{v},\mathbf{w}\}$, then
\[
Y_{\mathbf{v},d}^E(\lideal{F^\mathbf{u}_0A}{K'})=\lideal{F^\mathbf{u}_0A}{\upsilon_{\mathbf{v},d}^E(K')}+\ker(\omega_{\mathbf{v},d}^E).
\] 
\end{lem}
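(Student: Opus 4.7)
The plan is to recognize the stated bijection as the standard submodule correspondence for the surjective $F^\mathbf{v}_0 A$-linear map $\omega_{\mathbf{v},d}^E$, which is $F^\mathbf{w}_0 A$-linear because $F^\mathbf{w}_0 A \subseteq F^\mathbf{v}_0 A$ by the assumption that $\mathbf{v}$ is a $\mathbf{w}$-weight. The first step is to observe that $Y_{\mathbf{v},d}^E$ coincides with preimage formation under $\omega_{\mathbf{v},d}^E$: since $\omega_{\mathbf{v},d}^E \circ \upsilon_{\mathbf{v},d}^E = \id_{F^\mathbf{v}_d A^E}$ by construction (see \Cref{61}), any lift $b \in (F^\mathbf{v}_0 A^{P^{A,\mathbf{v}}_d})^E$ of an element $a \in F^\mathbf{v}_d A^E$ satisfies $b - \upsilon_{\mathbf{v},d}^E(a) \in \ker(\omega_{\mathbf{v},d}^E)$. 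Consequently, for any subset $J \subseteq F^\mathbf{v}_d A^E$,
\[
 Y_{\mathbf{v},d}^E(J) = \upsilon_{\mathbf{v},d}^E(J) + \ker(\omega_{\mathbf{v},d}^E) = (\omega_{\mathbf{v},d}^E)^{-1}(J),
\]
which is an $F^\mathbf{w}_0 A$-submodule whenever $J$ is, even though $\upsilon_{\mathbf{v},d}^E$ itself is only a set-theoretic section.

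With this identification in hand, the bijection and its preservation of inclusions, intersections, and sums follows from the classical correspondence theorem for the surjective $F^\mathbf{w}_0 A$-linear map $\omega_{\mathbf{v},d}^E$: the assignments $K \mapsto \omega_{\mathbf{v},d}^E(K)$ and $J \mapsto (\omega_{\mathbf{v},d}^E)^{-1}(J)$ are mutually inverse bijections between $F^\mathbf{w}_0 A$-submodules of $(F^\mathbf{v}_0 A^{P^{A,\mathbf{v}}_d})^E$ containing $\ker(\omega_{\mathbf{v},d}^E)$ and $F^\mathbf{w}_0 A$-submodules of $F^\mathbf{v}_d A^E$. The identification of $F^\mathbf{v}_0 A$-submodules on both sides is then immediate: if $K$ is $F^\mathbf{v}_0 A$-stable then so is $\omega_{\mathbf{v},d}^E(K)$ by $F^\mathbf{v}_0 A$-linearity, and conversely $(\omega_{\mathbf{v},d}^E)^{-1}(J)$ inherits $F^\mathbf{v}_0 A$-stability from that of $J$.

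For the final formula, applying $\omega_{\mathbf{v},d}^E$ to $\lideal{F^\mathbf{u}_0 A}{\upsilon_{\mathbf{v},d}^E(K')} + \ker(\omega_{\mathbf{v},d}^E)$ yields $\lideal{F^\mathbf{u}_0 A}{K'}$ by $F^\mathbf{u}_0 A$-linearity of $\omega_{\mathbf{v},d}^E$ (valid for $\mathbf{u} \in \{\mathbf{v}, \mathbf{w}\}$) together with $\omega_{\mathbf{v},d}^E \circ \upsilon_{\mathbf{v},d}^E = \id$, so the bijection gives one inclusion. For the other, given $b \in (\omega_{\mathbf{v},d}^E)^{-1}(\lideal{F^\mathbf{u}_0 A}{K'})$, write $\omega_{\mathbf{v},d}^E(b) = \sum_i a_i k'_i$ with $a_i \in F^\mathbf{u}_0 A$ and $k'_i \in K'$; then $b - \sum_i a_i \upsilon_{\mathbf{v},d}^E(k'_i)$ is killed by $\omega_{\mathbf{v},d}^E$, proving the reverse inclusion. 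No substantial obstacle is anticipated: once the set-theoretic section $\upsilon_{\mathbf{v},d}$ is absorbed into preimage formation, everything reduces to a routine application of the homomorphism theorem.
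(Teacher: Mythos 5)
Your proposal is correct and follows essentially the same route as the paper, which derives the lemma as an immediate consequence of the homomorphism theorem for the surjective $F^\mathbf{v}_0A$-linear (hence $F^\mathbf{w}_0A$-linear) map $\omega_{\mathbf{v},d}^E$. Your explicit observation that $\upsilon_{\mathbf{v},d}^E(J)+\ker(\omega_{\mathbf{v},d}^E)=(\omega_{\mathbf{v},d}^E)^{-1}(J)$, absorbing the set-theoretic section into preimage formation, is exactly the point that makes the standard correspondence apply, and your two-inclusion argument for the final formula is sound.
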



The following algorithms compute images of $F^\mathbf{v}_0A$- and $F^\mathbf{w}_0A$-submodules under the above one-to-one correspondence.


\begin{algorithmbis}[Given a $\mathbf{w}$-weight $\mathbf{v}$ on $A$ and an $F^\mathbf{w}_0A$-submodule $M\subseteq A^E$, this algorithm computes $\upsilon_{\mathbf{v},d}^E(M)$ for some $d\geq \deg_\mathbf{v}(M)$.]
\label[algorithm]{225}
\begin{algorithmic}[1]
\REQUIRE {Two weight vectors $\mathbf{v},\mathbf{w}\in \ZZ^n$ on $A$ such that 
$\mathbf{v}$ is a $\mathbf{w}$-weight, a finite set $E$, a computable ordering of type $\prec'_\mathbf{v}$ on $A$, a finite set $M\subseteq \Tn^E$ and an optional natural number $d'$.}
\ENSURE Two finite subsets $M',K\subseteq (F^\mathbf{v}_0A^{P^{A,\mathbf{v}}_d})^E$, such that $Y_{\mathbf{v},d}^E(\lideal{F_0^\mathbf{u}A}{\ol M} )= \lideal{F^\mathbf{u}_0A}{M'}+\lideal{F^\mathbf{v}_0A}{K}$ for $\mathbf{u}\in \{\mathbf{v},\mathbf{w}\}$ and $\ker(\omega_{\mathbf{v},d}^E) =\lideal{F^\mathbf{v}_0A}{K}$, where $d:=\max\{\deg_\mathbf{v}(M)[,d']\}$. 
\STATE Set $d:=\max\{\deg_{\mathbf{v}}(M)[,d']\}$ and determine $P^{A,\mathbf{v}}_{d}$. 
\STATE $M':=\emptyset$.
\FOR{$m\in M$}
\STATE Find $q^m\in (F^\mathbf{v}_0A^{P^{A,\mathbf{v}}_{d}})^E$ such that $\ol m=\sum_{e\in E} \sum_{p\in P_{d}^{A,\mathbf{v}}}q^m_{e_p} \ol p (e)$ as explained in \Cref{345}.
\STATE $M':=M'\cup \{ q^m \}$.
\ENDFOR
\STATE Compute $F^\mathbf{v}_0A$-generators $K$ of $\syz_A(\ol{P^{A,\mathbf{v}}_{d}})\cap F^\mathbf{v}_0A^{P^{A,\mathbf{v}}_{d}}$ by \Cref{340} using the ordering $(\prec'_\mathbf{v},<^{P^{A,\mathbf{v}}_{d}})$ for some order $<^{P^{A,\mathbf{v}}_{d}}$ on ${P^{A,\mathbf{v}}_{d}}$.
\RETURN $M',K^E$.
\end{algorithmic}
\end{algorithmbis}


In the above algorithm, we mean by $\max\{\deg_\mathbf{v}(M)[,d']\}$ the value $\max\{\deg_\mathbf{v}(M),d'\}$ if $d'$ is defined and $\deg_\mathbf{v}(M)$ otherwise. 


\begin{algorithmbis}[Given a weight vector $\mathbf{v}$ on $A$ and a subset $M\subseteq (F^\mathbf{v}_0 A^{P_d^{A,\mathbf{v}}})^E$, this algorithm computes $\omega_{\mathbf{v},d}^E(M)$.]
\label[algorithm]{64}
\begin{algorithmic}[1]
\REQUIRE {A weight vector $\mathbf{v}\in \ZZ^n$ on $A$, an integer $d\in \ZZ$, a finite set $E$ and a finite subset $M\subseteq (F^\mathbf{v}_0 A^{P_d^{A,\mathbf{v}}})^E$.}
\ENSURE A set $M'\subseteq A^E$ such that $\omega_{\mathbf{v},d}^E(M)=M'$. 
\STATE Set $M':=\emptyset$. 
\FOR{$m\in M $}
\STATE $M':=M'\cup \{\sum_{e\in E}\sum_{p\in P_d^{A,\mathbf{v}}} m_{e_p} \ol p (e) \}$.
\ENDFOR
\RETURN $M'$.
\end{algorithmic}
\end{algorithmbis}

\subsection{Module membership problem for $F^\mathbf{v}_0A$-submodules of free $A$-modules}\label{409} 
 
In this \namecref{409}, we require that \Cref{884}.\ref{884a} and \ref{884b} is satisfied.
 Recall that $V=\lideal{F_0^\mathbf{v}A}{\ol{V'}}\subseteq A^E$ with $V'\subseteq \Tn^E$ finite and consider $a\in \Tn^E$. We explain how to check whether $
 \ol a\in V$,
which is equivalent to 
$
 \lideal{F^\mathbf{v}_0A}{\ol a}\subseteq V.
$
Since the $\mathbf{v}$-degree of the above ideals is bounded by $d:=\max\{ \deg_{\mathbf{v}}({V'}),\deg_\mathbf{v}(a)\}$ and the one-to-one correspondence in \Cref{343} is inclusion-preserving, our problem reduces to deciding whether
\[
 \lideal{F^\mathbf{v}_0A}{\upsilon_{\mathbf{v},d}^E(\ol a)}+\lideal{F^\mathbf{v}_0A}{K_{\omega_{\mathbf{v},d}}^E}\subseteq \lideal{F^\mathbf{v}_0A}{\upsilon_{\mathbf{v},d}^E(\ol{V'})}+\lideal{F^\mathbf{v}_0A}{K_{\omega_{\mathbf{v},d}}^E},
\]
which is in turn equivalent to 
\[
 \upsilon_{\mathbf{v},d}^E(\ol a)\in \lideal{F^\mathbf{v}_0A}{\upsilon_{\mathbf{v},d}^E(\ol{V'})\cup K_{\omega_{\mathbf{v},d}}^E}.
\]
The above module membership problem can be solved over the PBW-reduction-algebra $F^\mathbf{v}_0A$ by a normal form computation.
The following algorithm checks more generally whether $\lideal{F^\mathbf{v}_0A}{P}\subseteq V$ for $P\subseteq A^E$ finite.


\begin{algorithmbis}[Given a weight vector $\mathbf{v}$ on $A$ and two $F^\mathbf{v}_0A$-submodules $V,P$ of a free $A$-module, this algorithm checks if $P\subseteq V$.]
\label[algorithm]{410}
\begin{algorithmic}[1]
\REQUIRE {A weight vector $\mathbf{v}\in \ZZ^n$ on $A$, such that \Cref{884}.\ref{884a} and \ref{884b} is satisfied, a finite set $E$ and submodules $V:=\lideal{F_0^\mathbf{v}A}{\ol{V'}}$, $P:=\lideal{F_0^\mathbf{v}A}{\ol{P'}}\subseteq A^E$ with $V',P'\subseteq \Tn^E$ finite. }
\ENSURE \texttt{true} if $P\subseteq V$ and \texttt{false} otherwise.
\STATE Set $d:=\max\{\deg_{\mathbf{v}}(V'),\deg_{\mathbf{v}}(P')\}$.
\STATE Compute $P'':=\upsilon_{\mathbf{v},d}^E(\ol{P'})$, $V'':=\upsilon_{\mathbf{v},d}^E(\ol{V'})$ and $K:=K_{\omega_{\mathbf{v},d}}^E$ using \Cref{225}.
\STATE Set $J:=\lideal{F^\mathbf{v}_0A}{V''\cup K}$.
\FOR{$p''\in P''$}
\IF[Use \Cref{828} over the PBW-reduction-algebra $F^\mathbf{v}_0A $.]{$p''\notin J$}
\RETURN \texttt{false}.
\ENDIF
\ENDFOR
\RETURN \texttt{true}.
\end{algorithmic}
\end{algorithmbis}


\begin{rem}\label{571} 
 With a little extra bookkeeping the above \namecref{410} can be extended to represent $\ol{p'}$ for $p'\in P'$ as an $F_0^{\mathbf{v}} A$-linear combination of the elements of $\ol{V'}$ if $\ol{p'}\in V$.
\end{rem}

\subsection{Intersection of $F^\mathbf{v}_0A$- and $F^\mathbf{w}_0A$-submodules of a free $A$-module}\label{400} 

In this \namecref{400}, we require that \Cref{884}.\ref{884a}-\ref{884d} is satisfied. 
Based on the one-to-one correspondence of \Cref{217} we describe a method to compute generators the $F^\mathbf{w}_0A$-submodule
\[
 V\cap W\subseteq A^E,
\]
where $ V=\lideal{F^\mathbf{v}_0A}{\ol{V'}}$ and $W=\lideal{F^\mathbf{w}_0A}{ \ol{W'}}$.
Setting $d:=\max\{\deg_\mathbf{v}({V'}),\deg_\mathbf{v}(W')\}\in \ZZ$, we get by the one-to-one correspondence in \Cref{343} and \Cref{344}
\begin{equation*}
 V\cap W=\omega_{\mathbf{v},d}^E(J_{W}\cap J_{V}),
\end{equation*}
where
\begin{equation}\label{380}
 J_{W}=\lideal{F^{\mathbf{w}_\mathbf{v}}_0F^\mathbf{v}_0A}{\upsilon_{\mathbf{v},d}^E(\ol{W'})} + \lideal{F^\mathbf{v}_0A}{K_{\omega_{\mathbf{v},d}}^E}
\end{equation}
and 
\begin{equation}\label{403}
J_{V}=\lideal{F^\mathbf{v}_0A}{\upsilon_{\mathbf{v},d}^E(\ol{V'})} + \lideal{F^\mathbf{v}_0A}{K_{\omega_{\mathbf{v},d}}^E}.
\end{equation}
To ease notation we identify $F^\mathbf{v}_0A^{W'}=F^\mathbf{v}_0A^{\upsilon_{\mathbf{v},d}^E(\ol{W'})}$ and $F^\mathbf{v}_0A^{V'}=F^\mathbf{v}_0A^{\upsilon_{\mathbf{v},d}^E(\ol{V'})}$ and set $K:=K_{\omega_{\mathbf{v},d}}^E$.
Now consider the syzygy module 
\[
R:=\syz_{F^\mathbf{v}_0A}\left(\upsilon_{\mathbf{v},d}^E(\ol{W'}),\upsilon_{\mathbf{v},d}^E(\ol{V'}),K \right)\subseteq F_0^\mathbf{v} A^{W'}\oplus F_0^\mathbf{v} A^{V'}\oplus F_0^\mathbf{v} A^{K}\xrightarrow{\pi_{W'}} F_0^\mathbf{v} A^{W'}
\]
and set
\[
R':=\pi_{W'}(R) \cap F^{\mathbf{w}_\mathbf{v}}_0 F^\mathbf{v}_0A^{W'}.
\]
A set of $F^\mathbf{v}_0A$-generators of $R$ can be obtained from a Gröbner basis calculation over the PBW-reduction-algebra $F^\mathbf{v}_0A$ (see \Cref{834}). By \Cref{340} we can determine a finite set $G$ such that
$R'=\lideal{F^{\mathbf{w}_\mathbf{v}}_0F_0^\mathbf{v}A}{G}$. The intersection $V\cap W$ is obtained from $G$ as follows.


\begin{lem}\label{376}
 We have 
 \begin{equation}\label{377}
J_{W}\cap J_{V}=\lideal{F^{\mathbf{w}_\mathbf{v}}_0F_0^\mathbf{v}A}{ \{\sum_{w'\in W'}g_{w'}\upsilon_{\mathbf{v},d}^E(\ol{w'}) \mid g\in G \} } +\lideal{F^\mathbf{v}_0A}{K}
 \end{equation}
 and hence
 $V\cap W=\lideal{F^\mathbf{w}_0A}{ \{\sum_{w'\in W'}g_{w'} \ol{w'} \mid g\in G \} }$.
\end{lem}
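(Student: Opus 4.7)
The plan is to prove the displayed equation \eqref{377} by a double inclusion exploiting the syzygy module $R$, and then derive the formula for $V\cap W$ by applying $\omega_{\mathbf{v},d}^E$ and invoking the one-to-one correspondence of \Cref{343}.

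For ``$\subseteq$'', given $m\in J_W\cap J_V$, I pick simultaneous representations
\[
 m=\sum_{w'\in W'}a_{w'}\upsilon_{\mathbf{v},d}^E(\ol{w'})+\sum_{k\in K}b_k k=\sum_{v'\in V'}c_{v'}\upsilon_{\mathbf{v},d}^E(\ol{v'})+\sum_{k\in K}d_k k
\]
with $a\in F^{\mathbf{w}_\mathbf{v}}_0F^\mathbf{v}_0A^{W'}$, $c\in F^\mathbf{v}_0A^{V'}$ and $b,d\in F^\mathbf{v}_0A^K$. Subtracting exhibits the tuple $(a,-c,b-d)$ as an element of $R$, so $a=\pi_{W'}(a,-c,b-d)\in \pi_{W'}(R)\cap F^{\mathbf{w}_\mathbf{v}}_0F^\mathbf{v}_0A^{W'}=R'$. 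Since $G$ generates $R'$ over $F^{\mathbf{w}_\mathbf{v}}_0F_0^\mathbf{v}A$, writing $a=\sum_{g\in G}\lambda_g g$ and resubstituting into the first expression for $m$ yields the required representation.

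For ``$\supseteq$'', $\lideal{F^\mathbf{v}_0A}{K}$ is contained in both $J_V$ and $J_W$ by their defining decompositions \eqref{403} and \eqref{380}. For each $g\in G\subseteq R'\subseteq \pi_{W'}(R)$ I choose a preimage $(g,c_g,k_g)\in R$; then
\[
 \sum_{w'\in W'}g_{w'}\upsilon_{\mathbf{v},d}^E(\ol{w'})=-\sum_{v'\in V'}(c_g)_{v'}\upsilon_{\mathbf{v},d}^E(\ol{v'})-\sum_{k\in K}(k_g)_k k\in J_V,
\]
while the same element lies in $J_W$ because, by construction of $R'$, its coefficients $g_{w'}$ lie in $F^{\mathbf{w}_\mathbf{v}}_0F^\mathbf{v}_0A$.

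For the formula for $V\cap W$, I apply the $F^\mathbf{v}_0A$-linear map $\omega_{\mathbf{v},d}^E$ to \eqref{377}. Since $\lideal{F^\mathbf{v}_0A}{K}=\ker(\omega_{\mathbf{v},d}^E)$ is contained in $J_W\cap J_V$, \Cref{343} gives $V\cap W=\omega_{\mathbf{v},d}^E(J_W\cap J_V)$; the map kills the $K$-summand and sends $\upsilon_{\mathbf{v},d}^E(\ol{w'})$ to $\ol{w'}$, so applying it to the right-hand side of \eqref{377} produces $\lideal{F^{\mathbf{w}_\mathbf{v}}_0F^\mathbf{v}_0A}{\{\sum_{w'\in W'}g_{w'}\ol{w'}\mid g\in G\}}$. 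Finally, \Cref{884}.\ref{884d} combined with $F^\mathbf{w}_0A\subseteq F^\mathbf{v}_0A$ identifies $F^{\mathbf{w}_\mathbf{v}}_0F^\mathbf{v}_0A$ with $F^\mathbf{w}_0A$. The main technical obstacle is the bookkeeping between coefficient rings during the syzygy extraction: a general element of $R$ has $W'$-component only in $F^\mathbf{v}_0A^{W'}$, and it is precisely the hypothesis $m\in J_W$ that forces this component into $F^{\mathbf{w}_\mathbf{v}}_0F^\mathbf{v}_0A^{W'}$ and hence into the $F^{\mathbf{w}_\mathbf{v}}_0F^\mathbf{v}_0A$-span of $G$.
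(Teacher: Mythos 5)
Your proof is correct and follows essentially the same route as the paper: the non-trivial inclusion of \eqref{377} is obtained exactly as in the paper by subtracting the two representations of an element of $J_W\cap J_V$ to produce a syzygy whose $W'$-component lies in $R'=\lideal{F^{\mathbf{w}_\mathbf{v}}_0F_0^\mathbf{v}A}{G}$, and then resubstituting. You additionally spell out the reverse inclusion and the passage to $V\cap W$ via $\omega_{\mathbf{v},d}^E$ and the identification $F^{\mathbf{w}_\mathbf{v}}_0F^\mathbf{v}_0A=F^\mathbf{w}_0A$, which the paper dismisses as immediate; both of these details are correct.
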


\begin{proof}
For the non-trivial inclusion of \Cref{377} pick $q\in J_{W}\cap J_{V}$. Then there exist $a\in F^{\mathbf{w}_\mathbf{v}}_0F_0^\mathbf{v}A^{W'}$, $b\in F^\mathbf{v}_0 A^{V'}$ and $c,c'\in F^\mathbf{v}_0 A^{K}$ such that
 \[
 q=\sum_{w'\in W'} a_{w'}\upsilon_{\mathbf{v},d}^E(\ol{w'})+ \sum_{k\in K} c_k k =\sum_{v'\in V'} b_{v'}\upsilon_{\mathbf{v},d}^E(\ol{v'})+\sum_{k\in K} c'_k k. 
 \]
This implies that $(a,-b,c-c')\in R$. By the choice of $G$, there is $f\in F^{\mathbf{w}_\mathbf{v}}_0F_0^\mathbf{v}A^{G}$ such that $a=\sum_{g\in G} f_g g$ and hence $\sum_{w'\in W'} a_{w'}\upsilon_{\mathbf{v},d}^E(\ol{w'})=\sum_{g\in G}f_g\sum_{w'\in W'} g_{w'}\upsilon_{\mathbf{v},d}^E(\ol{w'})$, which is in the right hand side of \Cref{377}. The second equality follows immediately.
\end{proof}


\begin{algorithmbis}[Given a $\mathbf{w}$-weight $\mathbf{v}$ on $A$, an $F^\mathbf{v}_0A$-submodule $V$ and an $F^\mathbf{w}_0A$-submodule $W$ of a free $A$-module, this algorithm computes the intersection $V\cap W$.]
\label[algorithm]{401}
\begin{algorithmic}[1]
\REQUIRE {Two weight vectors $\mathbf{v},\mathbf{w}\in \ZZ^n$ on $A$ such that $\mathbf{v}$ is a $\mathbf{w}$-weight and such that \Cref{884}.\ref{884a}-\ref{884d} is satisfied, a finite set $E$, submodules $V:=\lideal{F^\mathbf{v}_0A}{\ol{V'}},W:=\lideal{F^\mathbf{w}_0A}{{\ol{W'}}}\subseteq A^E$ with $V',W'\subseteq \Tn^E$ finite.}
\ENSURE A finite set $G\subseteq A^E$ such that
$\lideal{F_0^{\mathbf{w}}A}{G}=V\cap W$.
\STATE Set $d:=\max\{\deg_{\mathbf{v}}(V'),\deg_{\mathbf{v}}(W')\}$.
\STATE Compute $V'':=\upsilon_{\mathbf{v},d}^E(\ol{V'})$, $W'':=\upsilon_{\mathbf{v},d}^E(\ol{W'})$ and $K:=K_{\omega_{\mathbf{v},d}}^E$ by \Cref{225}.
\STATE Compute $R:= \syz_{F^\mathbf{v}_0A}(W'',V'',K)\subseteq F_0^\mathbf{v} A^{W'}\oplus F_0^\mathbf{v} A^{V'}\oplus F_0^\mathbf{v} A^{K}$ using \Cref{829} with the setup of \Cref{834}
over the PBW-reduction-algebra $F^\mathbf{v}_0A$. 
\STATE Determine $G'$ such that $\lideal{F^{\mathbf{w}_\mathbf{v}}_0F_0^\mathbf{v}A}{G'}=\pi_{W'}(R)\cap F^{\mathbf{w}_\mathbf{v}}_0F_0^\mathbf{v} A^{W'}$
using \Cref{340} over $F^\mathbf{v}_0A$. 
\STATE Set $G:=\{\sum_{w'\in W'}g_{w'}' \ol{w'}\mid g'\in G'\}$.
\RETURN $G$.
\end{algorithmic}
\end{algorithmbis}


\begin{rem}\label{449}
By setting $\mathbf{w}:=\mathbf{v}$, \Cref{401} enables us to determine the intersection of finitely generated $F^\mathbf{v}_0 A$-modules. In this case, we do not need to apply \Cref{340}.
\end{rem}

\subsection{Induced $\mathbf{w}$-weight filtration on $F^\mathbf{v}_0A$-submodules of free $A$-modules}\label{405}

In this \namecref{405}, we require that \Cref{884}.\ref{884a}-\ref{884c} is satisfied.
We explain how to compute $F^\mathbf{w}_\bullet F^\mathbf{v}_0 A$-generators of the module
\[
 F^\mathbf{w}[\mathbf{s}]_\bullet V=V\cap F^\mathbf{w}[\mathbf{s}]_\bullet A^E,
\]
where $V=\lideal{F^\mathbf{v}_0 A}{\ol{V'}}$ with $V'\subseteq \Tn^E$ finite. 
Bounding the $\mathbf{v}$-degree by $d:=\deg_\mathbf{v} (V')$, we proceed as in \Cref{400}.
By \Cref{884}.\ref{884c} there are $P^{A,\mathbf{v}}_d$ and $\mathbf{t}_d\in \ZZ^{P^{A,\mathbf{v}}_d}$ such that 
$F_\bullet^\mathbf{w} F_d^\mathbf{v} A=\sum_{p\in P^{A,\mathbf{v}}_d} F^\mathbf{w}_{\bullet-(\mathbf{t}_d)_p} F_0^\mathbf{v}A\cdot \ol p.$
We get by \Cref{343} and \Cref{344}
\begin{equation*}
 F^\mathbf{w}[\mathbf{s}]_\bullet V=V\cap F^\mathbf{w}[\mathbf{s}]_\bullet F^\mathbf{v}_dA^E=\omega_{\mathbf{v},d}^E(J_V \cap J_{F^\mathbf{w}[\mathbf{s}]_\bullet}),
\end{equation*}
where
\begin{equation*}
J_{V}=\lideal{F^\mathbf{v}_0A}{\upsilon_{\mathbf{v},d}^E(\ol{V'})} + \lideal{F^\mathbf{v}_0A}{K_{\omega_{\mathbf{v},d}}^E}
\quad \text{and}\quad
 J_{F^\mathbf{w}[\mathbf{s}]_\bullet}= F^{\mathbf{w}_\mathbf{v}}[\mathbf{t}]_\bullet (F^\mathbf{v}_0A^{P^{A,\mathbf{v}}_d })^E + \lideal{F^\mathbf{v}_0A}{K_{\omega_{\mathbf{v},d}}^E}
\end{equation*}
with $\mathbf{t}_{e_p}=\mathbf{s}_e+(\mathbf{t}_d)_p$ for $e\in E$, $p\in P^{A,\mathbf{v}}_d$.
It follows that
\[
 J_{V}\cap J_{F^\mathbf{w}[\mathbf{s}]_\bullet}=\left( J_V
 \cap F^{\mathbf{w}_\mathbf{v}}[\mathbf{t}]_\bullet (F^\mathbf{v}_0A^{P^{A,\mathbf{v}}_d })^E\right)+\lideal{F^\mathbf{v}_0A}{K_{\omega_{\mathbf{v},d}}^E}.
\]
Applying \Cref{393} over $ F^\mathbf{v}_0 A$, we determine a finite set $G\subseteq (\Tn^{P^{A,\mathbf{v}}_d})^E$ such that
\[
J_V
 \cap F^{\mathbf{w}_\mathbf{v}}[\mathbf{t}]_\bullet (F^\mathbf{v}_0A^{P^{A,\mathbf{v}}_d })^E=\sum_{g\in G} F^{\mathbf{w}_\mathbf{v}}_{\bullet-\deg_{\mathbf{w}_\mathbf{v}[\mathbf{t}]}(g)}F^\mathbf{v}_0 A\cdot\ol g.
\]
Since $ \deg_{\mathbf{w}[\mathbf{s}]}(\omega_{\mathbf{v},d}^E(\ol g))\leq \deg_{\mathbf{w}_\mathbf{v}[\mathbf{t}]}(\ol g) \leq \deg_{\mathbf{w}_\mathbf{v}[\mathbf{t}]}(g)$ by \Cref{884}.\ref{884c}, this implies that 
\[
 F^\mathbf{w}[\mathbf{s}]_\bullet V=
 \sum_{g\in G} F^\mathbf{w}_{\bullet-\deg_{\mathbf{w}_\mathbf{v}[\mathbf{t}]}(g)}F^\mathbf{v}_0 A\cdot \omega_{\mathbf{v},d}^E(\ol g)=
 \sum_{g\in G} F^\mathbf{w}_{\bullet-\deg_{\mathbf{w}[\mathbf{s}]}(\omega_{\mathbf{v},d}^E(\ol g))}F^\mathbf{v}_0 A\cdot \omega_{\mathbf{v},d}^E(\ol g).
\]


\begin{algorithmbis}[Given a $\mathbf{w}$-weight $\mathbf{v}$ on $A$ and an $F^\mathbf{v}_0A$-submodule $V$ of a free $A$-module with shift vector $\mathbf{s}$, this algorithm computes $F^\mathbf{w}\bracket{s}_\bullet V$.]
\label[algorithm]{406}
\begin{algorithmic}[1]
\REQUIRE {Two weight vectors $\mathbf{v},\mathbf{w}\in \ZZ^n$ on $A$ such that $\mathbf{v}$ is a $\mathbf{w}$-weight and such that \Cref{884}.\ref{884a}-\ref{884c} is satisfied, a finite set $E$, a submodule $V:=\lideal{F^\mathbf{v}_0A}{\overline{V'}}
 \subseteq A^E$ with $V'\subseteq \Tn^E$ finite and a shift vector $\mathbf{s}\in \ZZ^E$.}
\ENSURE A finite set $G\subseteq A^E$ and $\mathbf{t}\in \ZZ^G$ which satisfy $F^\mathbf{w}[\mathbf{s}]_\bullet V=\sum_{g\in G} F^\mathbf{w}_{\bullet-\mathbf{t}_g}F^\mathbf{v}_0 A\cdot g=\sum_{g\in G} F^\mathbf{w}_{\bullet-\deg_{\mathbf{w}[\mathbf{s}]}(g)}F^\mathbf{v}_0 A\cdot g.$
\STATE Set $d:=\deg_{\mathbf{v}}(V')$.
\STATE Choose $P^{A,\mathbf{v}}_d$ and $\mathbf{t}_d\in \ZZ^{P^{A,\mathbf{v}}_d}$ such that $F_\bullet^\mathbf{w} F_d^\mathbf{v} A=\sum_{p\in P^{A,\mathbf{v}}_d} F^\mathbf{w}_{\bullet-(\mathbf{t}_d)_p} F_0^\mathbf{v}A\cdot \ol p$.
\STATE Compute $V'':=\upsilon_{\mathbf{v},d}^E(\overline{V'})$ and $K:=K_{\omega_{\mathbf{v},d}}^E$ using \Cref{225}.
\STATE Define the shift vector $\mathbf{t}\in (\ZZ^{ P^{A,\mathbf{v}}_d } )^E$ by $\mathbf{t}_{e_p}=\mathbf{s}_e+(\mathbf{t}_d)_p$ for $e\in E$ and $p\in P^{A,\mathbf{v}}_d$.
\STATE Find $G'\subseteq (\Tn^{ P^{A,\mathbf{v}}_d } )^E$ such that 
$\sum_{g'\in G'} F^{\mathbf{w}_\mathbf{v}}_{\bullet-\deg_{\mathbf{w}_\mathbf{v}[\mathbf{t}]}(g')}F^\mathbf{v}_0 A\cdot \ol{g'}=
\lideal{F^\mathbf{v}_0A}{V''\cup K}\cap F^{\mathbf{w}_\mathbf{v}}[\mathbf{t}]_\bullet (F^\mathbf{v}_0A^{P^{A,\mathbf{v}}_d })^E$ using \Cref{393} over $F_0^\mathbf{v}A$.
\STATE Define $\mathbf{t}'\in \ZZ^{G'}$ by $\mathbf{t}_g':=\deg_{\mathbf{w}_\mathbf{v}[\mathbf{t}]}(g')$ for $g\in G'$.
\STATE Compute $G:=\omega_{\mathbf{v},d}^E(\ol{G'})$ by applying \Cref{64} and define $\mathbf{t}''\in \ZZ^G$ by $\mathbf{t}''_g:=\min\{\mathbf{t}_{g'}'\mid g'\in G' \text{ with } \omega_{\mathbf{v},d}^E(\ol{g'})=g \}$.
\RETURN $G, \mathbf{t''}$.
\end{algorithmic}
\end{algorithmbis}


\begin{rem}\label{898}
The above algorithm implicitly computes a representative $g'\in \Tn^E$ of $g\in G$ with $\deg_{\mathbf{w}[\mathbf{s}]}(g')\leq \mathbf{t}_g$
\end{rem}

\subsection{Associated $F^\mathbf{w}_\bullet$-graded modules to $F_0^\mathbf{v}A$-submodules of free $A$-modules}\label{417} 

In this \namecref{417} we require \Cref{884}.
We explain how to express $\gr^{\mathbf{w}[\mathbf{s}]}V$ for $V=\lideal{F_0^\mathbf{v}A}{V'}$ as a finitely generated $F^\mathbf{v}_0\gr^{\mathbf{w}}\!A$-module.


\begin{prp}\label{893}
Let $\mathbf{s}\in \ZZ^E$ be a shift vector and 
 $\gr^{\mathbf{w}} \!A=(\Tn,\lt_\mathbf{w}(S), I_\mathbf{w},\prec)$ under the identification made in \Cref{887}.\ref{887a}.
 \begin{enumerate}
 \item\label{893a} The vector $\mathbf{v}$ is a weight vector on $(\Tn,\lt_\mathbf{w}(S), I_\mathbf{w},\prec)$. With $F_\bullet^\mathbf{v}\gr^{\mathbf{w}}\! A$ induced by $F_\bullet^\mathbf{v} A$ $$F_0^\mathbf{v}\gr^{\mathbf{w}}\! A= F_0^\mathbf{v}(\Tn/\ideal{\lt_\mathbf{w}(S)\cup I_\mathbf{w}}).$$
 \item\label{893b} We may consider $\gr^{\mathbf{w}[\mathbf{s}]} V$ as an $F_0^\mathbf{v}\gr^\mathbf{w} A$-submodule of $(\gr^{\mathbf w}A)^E=\Tn^E/\ideal{\lt_\mathbf{w}(S)^E\cup I_\mathbf{w}^E}$, where we put $\ol{(e)}$ in degree $\mathbf{s}_e$.
 If
 $G\subseteq \Tn^E$ is finite with $F^\mathbf{w}[\mathbf{s}]_\bullet V=\sum_{g\in G} F_{\bullet-\deg_{\mathbf{w}[\mathbf{s}]}(g)}^\mathbf{w} F_0^\mathbf{v} A\cdot \ol{g}$, then
 $\gr^{\mathbf{w}[\mathbf{s}]}V$ is $F_0^\mathbf{v}\gr^\mathbf{w} \!A$-generated by
 $
\ol{ \lt_{\mathbf{w}[\mathbf{s}]}(G)}
$ under the above identification.
 \end{enumerate}
\end{prp}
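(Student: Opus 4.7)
\emph{Proof plan for Proposition~\ref{893}.} For Part~\eqref{893a}, the strategy is to check the two weight-vector conditions directly on the commutation system $\lt_\mathbf{w}(S)$. Writing $S = \{x_jx_i - c_{ij}x_ix_j - d_{ij}\}$, each $\lt_\mathbf{w}(x_jx_i - c_{ij}x_ix_j - d_{ij})$ has the form $x_jx_i - c_{ij}x_ix_j - \tilde d_{ij}$ where $\tilde d_{ij}$ is either $0$ (if $\deg_\mathbf{w}(d_{ij})<\mathbf{w}_i+\mathbf{w}_j$) or the $\mathbf{w}$-homogeneous part of $d_{ij}$ of top $\mathbf{w}$-degree. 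In either case $\tilde d_{ij}$ consists of monomials already appearing in $d_{ij}$, so $\deg_\mathbf{v}(\tilde d_{ij}) \leq \deg_\mathbf{v}(d_{ij}) \leq \deg_\mathbf{v}(x_ix_j)$, where the last inequality uses that $\mathbf{v}$ is a weight vector on $A$. For the equality of filtrations, the induced filtration on $\gr^\mathbf{w}\! A=\bigoplus_k F_k^\mathbf{w} A/F_{k-1}^\mathbf{w} A$ has degree-$0$ piece $\bigoplus_k (F_0^\mathbf{v} F_k^\mathbf{w} A + F_{k-1}^\mathbf{w} A)/F_{k-1}^\mathbf{w} A$, which by Assumption~\ref{884}.\ref{884e} coincides with $\bigoplus_k F_0^\mathbf{v} F_k^\mathbf{w} A/F_0^\mathbf{v} F_{k-1}^\mathbf{w} A$; the latter is the image of $F_0^\mathbf{v}\Tn$ in $\Tn/\ideal{\lt_\mathbf{w}(S)\cup I_\mathbf{w}}$, which is by definition the $F_0^\mathbf{v}$-piece of the PBW-reduction-algebra structure from Proposition~\ref{887}.\ref{887a}.

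For Part~\eqref{893b}, first note that the good filtration $F^\mathbf{w}[\mathbf{s}]_\bullet V$ is compatible with the filtered subalgebra $F^\mathbf{w}_\bullet F_0^\mathbf{v} A$, so $\gr^{\mathbf{w}[\mathbf{s}]}\!V$ is a module over $\gr^\mathbf{w}(F_0^\mathbf{v} A)$, which by Part~\eqref{893a} equals $F_0^\mathbf{v}\gr^\mathbf{w}\! A$. The inclusion $\gr^{\mathbf{w}[\mathbf{s}]}\!V \hookrightarrow (\gr^\mathbf{w}\! A)^E$ then follows from $F^\mathbf{w}[\mathbf{s}]_k V = V\cap F^\mathbf{w}[\mathbf{s}]_k A^E$.

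For the generating statement, take $v \in F^\mathbf{w}[\mathbf{s}]_k V$ representing a class in $\gr^{\mathbf{w}[\mathbf{s}]}_k V$. By hypothesis we can write $v = \sum_{g\in G} a_g \ol g$ with $a_g\in F^\mathbf{w}_{k-\deg_{\mathbf{w}[\mathbf{s}]}(g)} F_0^\mathbf{v} A$. Using Assumption~\ref{884}.\ref{884e} I choose a lift $\tilde a_g\in F_0^\mathbf{v}\Tn \cap F_{k-\deg_{\mathbf{w}[\mathbf{s}]}(g)}^\mathbf{w}\Tn$ of each $a_g$, so that $\tilde v := \sum_g \tilde a_g g \in \Tn^E$ lies in $F^\mathbf{w}[\mathbf{s}]_k\Tn^E$ and maps to $v$ in $A^E$. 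The $\mathbf{w}[\mathbf{s}]$-homogeneous component of $\tilde v$ of top degree is
\[
\sum_{\substack{g\in G\\ \deg_\mathbf{w}(\tilde a_g)+\deg_{\mathbf{w}[\mathbf{s}]}(g)=k}} \lt_\mathbf{w}(\tilde a_g)\cdot \lt_{\mathbf{w}[\mathbf{s}]}(g) \in \Tn^E,
\]
and its image in $\Tn^E/\ideal{\lt_\mathbf{w}(S)^E\cup I_\mathbf{w}^E}=(\gr^\mathbf{w}\!A)^E$ represents the class of $v$ in $\gr^{\mathbf{w}[\mathbf{s}]}_k V$. Since $\tilde a_g\in F_0^\mathbf{v}\Tn$ implies $\lt_\mathbf{w}(\tilde a_g)\in F_0^\mathbf{v}\Tn$, the classes $\ol{\lt_\mathbf{w}(\tilde a_g)}$ lie in $F_0^\mathbf{v}\gr^\mathbf{w}\! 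A$ by Part~\eqref{893a}, giving the required $F_0^\mathbf{v}\gr^\mathbf{w}\! A$-linear combination of $\ol{\lt_{\mathbf{w}[\mathbf{s}]}(g)}$.

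The main obstacle I expect is bookkeeping around the two possible definitions of $F_0^\mathbf{v}(\gr^\mathbf{w}\! A)$ in Part~\eqref{893a}: verifying that the filtration induced from $F_\bullet^\mathbf{v} A$ via $\gr^\mathbf{w}\!A\cong \Tn/\ideal{\lt_\mathbf{w}(S)\cup I_\mathbf{w}}$ really matches the intrinsic one on the PBW-reduction-algebra. This is exactly where Assumption~\ref{884}.\ref{884e} is essential, since without it one only has the inclusion $F_0^\mathbf{v} F_k^\mathbf{w} A \supseteq (F_0^\mathbf{v}\Tn\cap F_k^\mathbf{w}\Tn\cap\Pn + \ideal{I\cup S})/\ideal{I\cup S}$ in general, and the lift $\tilde a_g$ used in Part~\eqref{893b} might fail to live in $F_0^\mathbf{v}\Tn$.
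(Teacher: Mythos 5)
Your proposal is correct and follows essentially the same route as the paper's proof: Part~\eqref{893a} by inspecting $\lt_\mathbf{w}(S)$ and invoking \Cref{884}.\ref{884e}, and Part~\eqref{893b} by lifting the coefficients to $F_0^\mathbf{v}\Tn\cap F^\mathbf{w}_\bullet\Tn$ (again via \Cref{884}.\ref{884e}) and extracting the top $\mathbf{w}[\mathbf{s}]$-homogeneous component, which is exactly the paper's step of taking $\mathbf{w}[\mathbf{s}]$-leading terms in $v=\sum_g f_g g+p$. You make explicit a couple of points the paper leaves implicit (why $\mathbf{v}$ remains a weight vector on $\gr^{\mathbf{w}}\!A$, and precisely where the lift requires \Cref{884}.\ref{884e}), but the substance is identical.
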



\begin{proof}\
\begin{enumerate}

 \item 
Since $\mathbf{v}$ and $\mathbf w$ are weight vectors on $A$,
$\mathbf{v}$ is a one on $\gr^{\mathbf{w}} \!A$.
The map $\psi$ in \Cref{887}.\ref{887a} induces the claimed equality by \Cref{884}.\ref{884e}.

\item 
The identification follows from $\gr_k^\mathbf{w} F_0^\mathbf{v} A=F_0^\mathbf{v} \gr_k^\mathbf{v} A$ for $k\in \ZZ$ combined with Part~\ref{893a}.
To prove the statement on generation let $\ol v\in F^\mathbf{w}[\mathbf s]_k V$ with $v\in F^\mathbf{w}[\mathbf s]_k \Tn^E$ represent a non-zero element $v'$ in $\gr^{\mathbf{w}[\mathbf s]}_k V$. By hypothesis there exits $f\in F_0^\mathbf{v} \Tn^G$ with $f_g\in F^\mathbf{w}_{k-\deg_{\mathbf{w}[\mathbf s]}(g)}\Tn$ for all $g\in G$ such that $\ol v=\sum_{g\in G} \ol{f_g}\ol g$
and hence
\[
 v=\sum_{g\in G} f_g g+p
\]
for some $p\in F^\mathbf{w}[\mathbf s]_k \ideal{I\cup S}$. Taking $\mathbf{w}[\mathbf s]$-leading terms $v'$ identifies with 
\[
\ol{\lt_{\mathbf{w}[\mathbf{s}]}(v)}= \sum_{g\in G'} \ol{\lt_{\mathbf{w}}(f_g)}\cdot \ol{\lt_{\mathbf{w}[\mathbf{s}]}( g)} \in \Tn^E/\ideal{I_\mathbf{w}^E\cup \lt_\mathbf{w}(S)^E }
\]
for a suitable subset $G'\subseteq G$.\qedhere

\end{enumerate}
\end{proof}


Note that \Cref{884}.\ref{884a}-\ref{884c} enables us to find $G$ as in the above \namecref{893} yielding the following algorithm:
 
 
\begin{algorithmbis}[Given a $\mathbf{w}$-weight $\mathbf{v}$ on $A$ and an $F^\mathbf{v}_0A$-submodule $V$ of a free $A$-module with shift vector $\mathbf{s}$, this algorithm computes $\gr^{\mathbf{w}\bracket{s}} V$.]
\label[algorithm]{423}
\begin{algorithmic}[1]
\REQUIRE {Two weight vectors $\mathbf{v},\mathbf{w}\in \ZZ^n$ on $A$ such that $\mathbf{v}$ is a $\mathbf{w}$-weight and such that \Cref{884} is satisfied, a finite set $E$,
an $F^\mathbf{v}_0A$-module $V=\lideal{F^\mathbf{v}_0A}{\ol{V'}}\subseteq A^E$ with $V'\subseteq \Tn^E$ finite and a shift vector $\mathbf{s}\in \ZZ^E$.}
\ENSURE A finite $\mathbf{w}[\mathbf{s}]$-ho\-mo\-gen\-eous set $G\subseteq \Tn^E$ inducing $F^\mathbf{v}_0\gr^{\mathbf{w}}A$-generators of $\gr^{\mathbf{w}[\mathbf{s}]}V\subseteq \Tn^E/\ideal{\lt_\mathbf{w}(\ideal{I\cup S})^E}$.
\STATE Determine a finite set $G\subseteq \Tn^E$ satisfying $F^\mathbf{w}[\mathbf{s}]_\bullet V=\sum_{g\in G} F_{\bullet-\deg_{\mathbf{w}[\mathbf{s}]}(g)}^\mathbf{w} F_0^\mathbf{v} A\cdot \ol{g}$ by \Cref{406} and \Cref{898}. 
\STATE Set $G:=\lt_{\mathbf{w}[\mathbf{s}]}(G)$.
\RETURN $G$.
\end{algorithmic}
\end{algorithmbis}

 
\begin{exa}\label{904} 
In the situation of \Cref{577}.\ref{577c} note that $F^\mathbf{v}_0 T_X\cong T_X^V $ for the weight vector $\mathbf{v}=((-\delta_{in})_{1\leq i\leq n},\allowbreak(\delta_{im})_{1\leq i\leq m})$ on $T_X$. Hence $F^\mathbf{v}_0 \gr^\mathbf{w} T_X= \gr^\mathbf{w} F^\mathbf{v}_0 T_X=\gr^\mathbf{w_\mathbf{v}} T_X^V $ for the weight vectors $\mathbf{w}=((0)_{1\leq i\leq n},(1)_{1\leq i\leq m})$ on $T_X$ and $\mathbf w_\mathbf{v}=\mathbf w$ on $T_X^V$ by \Cref{313}. In particular, a PBW-reduction datum is computable for $F^\mathbf{v}_0 \gr^\mathbf{w} T_X$ by \Cref{903}.\ref{903b}.
\end{exa}

\section{Interplay of weight filtrations on a module over the PBW-reduction-algebra $A$}\label{311} 

The purpose of this \namecref{311} is to extend the methods from the previous \namecref{397} for free $A$-modules to quotients $A^E/L$. 
In general the $\mathbf{v}$-degree is unbounded and \Cref{343} does not apply.
In many cases this problem can be solved by passing to
$F_d^\mathbf{v} L$ for a suitable integer $d$.
 
Let $A=(\Tn,S,I,\prec)$ be a PBW-reduction-algebra and $\mathbf{v}$, $\mathbf{w}\in \ZZ^n$ two weight vectors on $A$ such that $\mathbf{v}$ is a $\mathbf{w}$-weight. Given a
 finite set $E$ and $L',V',W'\subseteq A^E$ finite subsets, $L:=\lideal{A}{L'}$ and $M:=A^E/L$, we consider the $F_\bullet^\mathbf{v}A$- and $F_\bullet^\mathbf{w}A$-submodules 
\[
 V:=\lideal{F_0^{\mathbf{v}}A}{\ol{V'}}\subseteq M\quad \text{and}\quad W:=\lideal{F_0^{\mathbf{w}}A}{\ol{W'}}\subseteq M,
\]
respectively.
For any finite set $N\subseteq A^E$ or element $a\in A^E$, we denote by 
$\tilde N$ or $\tilde a$ a (set of) representatives in $\Tn^E$.

We extend our list of assumptions from \Cref{884} as follows:


\begin{ass}\label{905}
\Cref{884}.\ref{884a} and \ref{884b} holds if we replace $A$ by $\gr^\mathbf{w} A$.
\end{ass}


\begin{exa}\label{906}
In the setting of \Cref{14} \Cref{905} holds by \Cref{903}.\ref{903a}, \Cref{394}, \Cref{875} and \Cref{904}.
\end{exa}

\subsection{$F_0^\mathbf{v} A$-presentations of $F^\mathbf{v}_0A$-submodules of $A$-modules}\label{263}

In this \namecref{263}, we only require that $\mathbf{v}$ is a weight vector on $A$ and that \Cref{884}\ref{884a} holds.
To represent $V$ as a quotient of a free $F^\mathbf{v}_0A$-module we use the surjective $F^\mathbf{v}_0A$-linear map
\[
 \varphi: F^\mathbf{v}_0A^{V'}\to V,\; (v')\mapsto \overline{v'}.
\]
 It induces an isomorphism of $F^\mathbf{v}_0A$-modules $V\cong F^\mathbf{v}_0A^{V'}/\ker(\varphi)$, where 
\[
 \ker(\varphi)=\pi_{V'}(\syz_A(V',L'))\cap F^\mathbf{v}_0A^{V'}.
\]
The preceding intersection is computable by \Cref{340}. Hence we obtain:


\begin{algorithmbis}[Given a weight vector $\mathbf{v}$ on $A$ and an $F^\mathbf{v}_0 A$-submodule $V$ of a finitely presented $A$-module, this algorithm represents $V$ as a quotient of a free $F^\mathbf{v}_0 A$-module.]
\label[algorithm]{79}
\begin{algorithmic}[1]
\REQUIRE {A weight vector $\mathbf{v}\in \ZZ^n$ on $A$ such that \Cref{884}.\ref{884a} holds, a finite set $E$, an $A$-module $M:=A^E/\lideal{A}{L'}$ and a submodule $V:=\lideal{F^\mathbf{v}_0 A}{\overline{V'}}\subseteq M$ with $L', V'\subseteq A^E$ finite.}
\ENSURE A finite set $Q\subseteq F^\mathbf{v}_0 A^{V'}$ such that $ F^\mathbf{v}_0A^{V'}/\lideal{F^\mathbf{v}_0A}{Q}\cong V$ via $\overline{a}\mapsto \overline{\sum_{v'\in V'} a_{v'}v'}$.
\STATE Compute an $A$-generating set $S$ of $\syz_A(V',L')$ using \Cref{829} with the setup of \Cref{834}.
\STATE Compute an $F^\mathbf{v}_0A$-generating set $Q$ of $\lideal{A}{\pi_{V'}(S)}\cap F^\mathbf{v}_0A^{V'}$ by \Cref{340}.
\RETURN $Q$.
\vskip0.2cm
\end{algorithmic}
\end{algorithmbis}

\subsection{Module membership problem for $F^\mathbf{v}_0A$-submodules of $A$-modules}\label{70} 
 
In this \namecref{70}, we require that \Cref{884}.\ref{884a} and \ref{884b} is satisfied. 
We explain how to check for $a\in A^E$ whether
$
 \overline{a}\in V=\lideal{F_0^\mathbf{v}A}{\overline{V'}}\subseteq M=A^E/L,
$
which is equivalent to 
\[
 a\in \lideal{F^\mathbf{v}_0A}{V'}+L.
\]
Since 
 $\deg_\mathbf{v}(a),\deg_\mathbf{v}(V)\leq d:=\max\{ \deg_{\mathbf{v}}(\tilde{V'}),\deg_\mathbf{v}(\tilde{a})\}$ this, in turn, is equivalent to
\begin{equation*}
 {a}\in \lideal{F^\mathbf{v}_0A}{V'}+(L\cap F^\mathbf{v}_d A^E).
\end{equation*}
An $F^\mathbf{v}_0 A$-generating set $L''$ of the above intersection can be determined by \Cref{340}. It remains to decide whether 
\begin{equation*}
 {a}\in \lideal{F^\mathbf{v}_0A}{V'\cup L''}.
\end{equation*}
This problem is solvable by \Cref{410}.


\begin{algorithmbis}[Given a weight vector $\mathbf{v}$ on $A$ and two $F^\mathbf{v}_0 A$-submodules $V$ and $P$ of a finitely presented $A$-module, this algorithm checks if $P\subseteq V$.]
\label[algorithm]{38}
\begin{algorithmic}[1]
\REQUIRE {A weight vector $\mathbf{v}\in \ZZ^n$ on $A$ such that \Cref{884}.\ref{884a} and \ref{884b} holds, a finite set $E$, a module $M=A^E/\lideal{A}{L'}$ and submodules $V:=\lideal{F_0^\mathbf{v}A}{\ol{V'}},P:=\lideal{F_0^\mathbf{v}A}{\ol{P'}}\subseteq M$ with $L',V',P'\subseteq A^E$ finite.}
\ENSURE \texttt{true} if $P\subseteq V$ and \texttt{false} otherwise.
\STATE Set $d:=\max\{\deg_{\mathbf{v}}(\tilde{V'}),\deg_{\mathbf{v}}(\tilde{P'})\}$.
\STATE Compute a set $L''$ of $F^\mathbf{v}_0A$-generators of $\lideal{A}{L'}\cap F_d^\mathbf{v} A^E$ using \Cref{340}.
\IF[Decide by \Cref{410}.]{$P'\subseteq \lideal{F^\mathbf{v}_0A}{V'\cup L''}$} 
\RETURN \texttt{true}.
\ENDIF
\RETURN \texttt{false}.
\end{algorithmic}
\end{algorithmbis}


\begin{rem}\label{572} 
 By \Cref{571} the above \namecref{38} can be extended to represent $\ol{p'}\in P'$ as an $F_0^{\mathbf{v}} A$-linear combination of the elements of $\ol{V'}$ if $p\in V$.
\end{rem}

\subsection{Intersection of $F^\mathbf{v}_0A$- and $F^\mathbf{w}_0A$-submodules of an $A$-module}\label{219}

In this \namecref{219}, we require that \Cref{884}.\ref{884a}-\ref{884d} is satisfied.
Consider the $A$-module $M=A^E/L$ and its submodules $V=\lideal{F^\mathbf{v}_0A}{\overline{V'}}$ and $W=\lideal{F^\mathbf{w}_0A}{\overline{W'}}$. We explain how to compute the $F^\mathbf{w}_0A$-submodule
\[
 W\cap V\subseteq M.
\]
 Setting $I:=\lideal{F^\mathbf{w}_0A}{W'} \cap \left(\lideal{F^\mathbf{v}_0A}{V'}+L\right)$, we can rewrite 
\begin{equation}\label{383}
 W\cap V=(I+L)/L\subseteq M.
\end{equation}
Since $\deg_{\mathbf{v}}(\lideal{F^\mathbf{w}_0A}{W'} )\leq \deg_{\mathbf{v}}(\tilde{W'})\leq d:=\max\{\deg_{\mathbf{v}}(\tilde{V'}),\deg_{\mathbf{v}}(\tilde{W'})\}$ by \Cref{344}
\[
 I=\lideal{F^\mathbf{w}_0A}{W'} \cap \left(\lideal{F^\mathbf{v}_0A}{V'}+(L\cap F^\mathbf{v}_d A^E)\right) 
\]
is an intersection of a finitely generated $F^\mathbf{w}_0 A$-module with a finitely generated $F^\mathbf{v}_0 A$-module.
\Cref{340} yields a finite set of $F^\mathbf{v}_0A$-generators $L''$ of $L\cap F^\mathbf{v}_dA$. 
Finally
\[
 I=\lideal{F^\mathbf{w}_0A}{W'} \cap \lideal{F^\mathbf{v}_0A}{V'\cup L''}
\]
can be computed as in \Cref{400}.


\begin{algorithmbis}[Given a $\mathbf{w}$-weight $\mathbf{v}$ on $A$, an $F^\mathbf{v}_0A$-submodule $V$ and an $F^\mathbf{w}_0A$-submodule $W$ of a finitely presented $A$-module, this algorithm computes $V\cap W$.]
\label[algorithm]{378}
\begin{algorithmic}[1]
\REQUIRE {Two weight vectors $\mathbf{v},\mathbf{w}\in \ZZ^n$ on $A$ such that $\mathbf{v}$ is a $\mathbf{w}$-weight and such that \Cref{884}.\ref{884a}-.\ref{884d} is satisfied, a finite set $E$, an $A$-module $M:=A^E/\lideal{A}{L'}$, submodules $V:=\lideal{F^\mathbf{v}_0A}{\overline{V'}}$, $W:=\lideal{F^\mathbf{w}_0A}{\overline{W'}}\subseteq M$ with $L',V',W'\subseteq A^E$ finite.}
\ENSURE A finite set $G\subseteq A^E$ such that 
$V\cap W=\lideal{F_0^{\mathbf{w}}A}{\overline{G}}$.
\STATE Set $d:=\max\{\deg_{\mathbf{v}}(\tilde{V'}),\deg_{\mathbf{v}}(\tilde{W'})\}$.
\STATE Determine $F^\mathbf{v}_0A$-generators $L''$ of $\lideal{A}{L'}\cap F^\mathbf{v}_dA^E$ using \Cref{340}.
\STATE Compute a set of $F_0^{\mathbf{w}}A$-generators $G$ of $\lideal{F^\mathbf{w}_0A}{{W'}}\cap \lideal{F^\mathbf{v}_0A}{{V'\cup L''}}$ by \Cref{401}.
\RETURN $G$.
\vskip0.2cm
\end{algorithmic}
\end{algorithmbis}


The preceding approach does not allow us to reduce the computation of $F^\mathbf{w}[\mathbf{s}]_\bullet V= F^\mathbf{w}[\mathbf s]_\bullet M \cap V$ to the situation of \Cref{406}, because the $\mathbf{v}$ is in general unbounded.
Up to a fixed index $k\in \ZZ$ this is possible.
Based on \Cref{406}, one can compute a finite set $G\subseteq A^E$ and $\mathbf{t}\in \ZZ^G$ such that
\begin{equation}\label{m1}
F^\mathbf{w}[\mathbf{s}]_{k'}M \cap V=\sum_{g\in G} F^\mathbf{w}_{k'-\mathbf{t}_g} F^\mathbf{v}_0 A\cdot \ol g=\sum_{g\in G} F^\mathbf{w}_{k'-\deg_{\mathbf{w}[\mathbf{s}]}(g)} F^\mathbf{v}_0 A\cdot \ol g \text{ for } k'\leq k
\end{equation}
 and
\begin{equation}\label{m2}
F^\mathbf{w}[\mathbf{s}]_\bullet \lideal{F^\mathbf{v}_0A}{G}=\sum_{g\in G}
F^\mathbf{w}_{\bullet- \mathbf{t}_g}F^\mathbf{v}_0 A\cdot g=\sum_{g\in G}
F^\mathbf{w}_{\bullet- \deg_{\mathbf{w}[\mathbf{s}]}(g)}F^\mathbf{v}_0 A\cdot g.
\end{equation}


\begin{algorithmbis}[Given a $\mathbf{w}$-weight $\mathbf{v}$ on $A$ and an $F^\mathbf{v}_0A$-submodule $V$ of a finitely presented $A$-module with shift vector $\mathbf{s}$, this algorithm computes $F^\mathbf{w}\bracket{s}_\bullet V$ up to index $k$.] 
\label[algorithm]{444}
\begin{algorithmic}[1]
\REQUIRE {Two weight vectors $\mathbf{v},\mathbf{w}\in \ZZ^n$ on $A$ such that $\mathbf{v}$ is a $\mathbf{w}$-weight and such that \Cref{884}.\ref{884a}-\ref{884c} holds, a finite set $E$, an $A$-module $M:=A^E/\lideal{A}{L'}$, a submodule $V:=\lideal{F^\mathbf{v}_0A}{\overline{V'}}
 \subseteq M$ with $L',V'\subseteq A^E$ finite, a shift vector $\mathbf{s}\in \ZZ^E$ and $k\in \ZZ$.}
\ENSURE A finite set $G\subseteq A^E$ and $\mathbf{t}\in \ZZ^G$ satisfying
\Cref{m1,m2}.
\STATE Set $d':=\max\{ \deg_\mathbf{v}(P^{A,\mathbf{w}}_{k-\mathbf{s}_e})\mid e\in E\}$.\COMMENT{$\deg_{\mathbf{v}}(F^\mathbf{w}[\mathbf{s}]_k A^E)\leq d'$.}
\STATE Set $d:=\max\{d',\deg_{\mathbf{v}}(\tilde{V'})\}$.
\STATE Determine a set of $F^\mathbf{v}_0A$-generators $L''$ of $\lideal{A}{L'}\cap F^\mathbf{v}_dA^E$ using \Cref{340}.
\STATE Compute a finite set $G\subseteq A^E$ and $\mathbf{t}\in \ZZ^G$ satisfying $F^\mathbf{w}[\mathbf{s}]_\bullet \lideal{F^\mathbf{v}_0A}{{V'\cup L''}}=\sum_{g\in G}
F^\mathbf{w}_{\bullet- \mathbf{t}_g}F^\mathbf{v}_0 A\cdot g $ by \Cref{406}.
\RETURN $G, \mathbf{t}$.
\vskip0.2cm
\end{algorithmic}
\end{algorithmbis}


\begin{rem}\label{m3}
Due to \Cref{898} the above algorithm implicitly computes a representative $\tilde g\in \Tn^E$ of $g\in G$ with $\deg_{\mathbf{w}[\mathbf{s}]}(\tilde g)\leq \mathbf{t}_g$.
\end{rem}

\subsection{Induced $\mathbf{w}$-weight filtrations on $F^\mathbf{v}_0A$-submodules of $A$-modules}\label{71} 

In this \namecref{71}, we require that \Cref{884} and \Cref{905} are satisfied.
Recall that $V=\lideal{F_0^\mathbf{v}A}{\overline{V'}}$ is an $F^\mathbf{v}_0A$-submodule of $M=A^E/L$ with $L=\lideal{A}{L'}$ and $\mathbf{s}\in \ZZ^E$ a shift vector. As an additional hypothesis satisfied in applications to mixed Hodge modules we require that 
$F^\mathbf{w}[\mathbf{s}]_\bullet V$ is $F^\mathbf{w}_\bullet F^\mathbf{v}_0 A$-finitely generated.

Our approach is based on a general result on induced filtrations:
Let $F_\bullet R$ be a filtered $\K$-algebra and $T\subseteq R$ a subalgebra with induced filtration. Consider an $ F_\bullet R$-module $F_\bullet N$, an $R$-submodule $P\subseteq N$ and an $T$-submodule $U\subseteq N$. 
 The filtration $F_\bullet N$ induces two $F_\bullet T$-filtrations on $Q:=(U+P)/P$ as follows:
\[
\xymatrix{
& F_\bullet N \ar[dr]^-{\text{quot}}_-{\text{filt}}\ar[dl]_-{\text{subm}}^-{\text{filt}} &\\
 F_\bullet U \ar[d]_-{\text{quot}}^-{\text{filt}}& &F_\bullet (N/P)\ar[d]_-{\text{subm}}^-{\text{filt}}\\
 \llap{$F_\bullet^{q(U)}Q:=$}(F_\bullet U + P)/P\ar@{^{(}->}[rr]& & F_\bullet^s Q:=F_\bullet(N/P)\cap Q.
 }
\]
One easily sees that $F^{q(U)}_\bullet Q\subseteq F^s_\bullet Q$ and that $F^{q(U)}_\bullet Q$ depends on $U$, while $F^s_\bullet Q$ does not. 
Equality of the two filtrations can be described in of associated graded modules:


\begin{prp}\label{442}
We have $F^{q(U)}_\bullet Q=F_\bullet^s Q$ if and only if
\[
 \gr^{F}(U\cap P)= \gr^{F}U\cap \gr^{F}P
\]
under the natural identification of the above modules with submodules of $\gr^F N$. 
\end{prp}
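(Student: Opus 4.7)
The plan is to translate both sides of the claimed equivalence into concrete submodule identities in $N$ at each level $k$, and then pass between them by a level-lowering induction.

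First, unwinding the definitions, the equality $F_\bullet^{q(U)}Q = F_\bullet^s Q$ is equivalent to the family of identities
\[
 U \cap (F_k N + P) = F_k U + (U \cap P), \quad k\in\ZZ,
\]
which I will denote $(*)$. Using $F_k U = U \cap F_k N$ and $F_k P = P \cap F_k N$ to identify $\gr^F U$, $\gr^F P$, and $\gr^F(U\cap P)$ as submodules of $\gr^F N$, the graded condition becomes the family
\[
 F_k(U\cap P) + F_{k-1} N = (F_k U + F_{k-1} N) \cap (F_k P + F_{k-1} N), \quad k\in\ZZ,
\]
which I will denote $(**)$. In each case one inclusion is automatic, so the content lies in the opposite inclusion.

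For $(*) \Rightarrow (**)$, I would pick $n$ in the right-hand side of $(**)$ at level $k$, write $n = u + f_1 = p + f_2$ with $u \in F_k U$, $p \in F_k P$, $f_i \in F_{k-1} N$, observe that $u - p \in F_{k-1} N$, and conclude $u \in U \cap (F_{k-1} N + P)$. Applying $(*)$ at level $k-1$ gives $u = u' + q$ with $u' \in F_{k-1} U$ and $q \in U \cap P$. The key observation is that $q = u - u'$ automatically lies in $F_k N$, hence $q \in F_k(U\cap P)$, and rewriting $n = u' + q + f_1$ places $n$ in $F_k(U\cap P) + F_{k-1} N$, as needed.

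For $(**) \Rightarrow (*)$, I would proceed by induction on the upper filtration level of $u$. Given $u \in U \cap (F_k N + P)$ with $u \in F_{k+j} N$, the base case $j = 0$ is immediate since then $u \in F_k U$. For $j \geq 1$, writing $u = n + p$ with $n \in F_k N$, $p \in P$ forces $p \in F_{k+j} P$ and $[u] = [p]$ in $\gr^F_{k+j} N$; combined with $u \in F_{k+j} U$, the hypothesis $(**)$ yields $q \in F_{k+j}(U\cap P)$ with $u - q \in F_{k+j-1} N$. Then $u_1 := u - q$ lies in $F_{k+j-1} U \cap (F_k N + P)$ with strictly smaller upper level, so the inductive hypothesis gives $u_1 \in F_k U + (U\cap P)$, whence $u \in F_k U + (U\cap P)$. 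The main obstacle I anticipate is the careful identification of $\gr^F U$, $\gr^F P$, and $\gr^F(U\cap P)$ as submodules of $\gr^F N$ required to make $(**)$ meaningful, together with the termination of the level-lowering; the latter is guaranteed provided the filtration on $N$ is exhaustive, so that every $u \in U$ sits in some $F_{k+j} N$ and the induction on $j$ closes.
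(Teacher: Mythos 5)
Your proof is correct and takes essentially the same route as the paper's: the forward direction lifts an element of $\gr^F_kU\cap\gr^F_kP$ and applies the filtration equality one level down to split off an element of $U\cap P\cap F_kN$, and the converse is the same descending induction on the first filtration level containing $u$, which (as you note, and as the paper tacitly assumes) terminates because the filtration is exhaustive. Your translation into the explicit submodule identities $(*)$ and $(**)$ is only a cosmetic repackaging of the paper's argument.
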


\begin{proof}
For both equalities the inclusion of the left in the right hand side holds trivially.

 Assume that $F^{q(U)}_\bullet Q=F_\bullet^s Q$ and let $0\neq n\in \gr^{F}_k\ U\cap \gr^{F}_k P$ for $k\in \ZZ$. Then there exist $u\in F_k U$ and $p\in F_k P$ such that $n=u+F_{k-1} N=p+F_{k-1} N$. This implies $u-p\in F_{k-1}N$ and thus $\overline{u}\in Q\cap F_{k-1}( N/P)=F_{k-1}^s Q=F_{k-1}^{q(U)}Q$. Hence there is some $u'\in F_{k-1} U$ and $p'\in P$ such that $u=u'+p'$. We conclude that $p'\in P\cap U$ and $p'+F_{k-1} N=u-u'+F_{k-1} N=n$ showing the first implication.
 
 Conversely, assume $\gr^{F}(U\cap P)= \gr^{F}U\cap \gr^{F}P$ and consider $q\in U+P$ with $0\neq \overline{q}\in F^s_k Q$ for $k\in \ZZ$. By construction of $F^s_\bullet Q$, there exists $u\in U, p\in P$ such that $\overline{q}=\overline{u} $ and $u+p\in F_k N$. If $u\in F_k N$, we are done. Otherwise $p\notin F_k N$ and there is some $j>k$ such that $u+F_{j-1} N=-p+F_{j-1} N\in \gr^{F}_j U\cap \gr^{F}_j P=\gr^{F}_j(U\cap P)$. Hence there exist $n\in U\cap P$, $u'\in F_{j-1}U$ and $p'\in F_{j-1} P$ such that $u=n+u'$ and $p=-n+p'$. Then $u'+p'=u+p\in F_{k} N$, $\ol q =\ol{u'}$ and $u'\in F_{j-1} N$. Iterating this argument finishes the proof.
\end{proof}


In the following we construct an increasing sequence of finitely generated $F^\mathbf{v}_0 A$-modules $V_k\subseteq \lideal{F_0^\mathbf{v}A}{{V'}} +L=V_k+L$ such that
\[
(F^\mathbf{w}[\mathbf s]_\bullet V_k+L)/L =F^\mathbf{w}[\mathbf{s}]_\bullet^{q(V_k)} V\subseteq 
F^\mathbf{w}[\mathbf{s}]_\bullet^s V=F^\mathbf{w}[\mathbf{s}]_\bullet V
\]
becomes an equality for large $k$. By assumption $F^\mathbf{w}[\mathbf{s}]_k V$ contains $F^\mathbf{w}_\bullet F^\mathbf{v}_0 A$-generators of 
$F^\mathbf{w}[\mathbf{s}]_\bullet V$ for large $k$.
For fixed $k\in \ZZ$ \Cref{444} computes a set $V_k'\subseteq A^E$ such that
\begin{equation}\label{429}
F^\mathbf{w}[\mathbf{s}]_{k'}V=\sum_{v\in V_k'} F^\mathbf{w}_{k'-\deg_{\mathbf{w}[\mathbf{s}]}(v)} F^\mathbf{v}_0 A\cdot \ol v
\end{equation}
for $k'\leq k$ 
and 
\begin{equation}\label{445}
F^\mathbf{w}[\mathbf{s}]_\bullet \lideal{F^\mathbf{v}_0A}{V_k'}=\sum_{v\in V_k'}
F^\mathbf{w}_{\bullet- \deg_{\mathbf{w}[\mathbf{s}]}(v)}F^\mathbf{v}_0 A\cdot v.
\end{equation}
We consider only $k$ such that $F^\mathbf{w}[\mathbf{s}]_k V$ is a set of $F^\mathbf{v}_0 A$-generators of $V$. It suffices to take $k\geq \deg_{\mathbf{w}[\mathbf{s}]}(\tilde{V'})$.
For any such $k\in \ZZ$ set $V_k=\lideal{F^\mathbf{v}_0A}{V_k'}$. Then
$$F^\mathbf{w}[\mathbf{s}]_\bullet^{q({V_k})} V=\sum_{v\in V_k'}
F^\mathbf{w}_{\bullet- \deg_{\mathbf{w}[\mathbf{s}]}(v)}F^\mathbf{v}_0 A\cdot \ol v$$ is an exhaustive filtration.
 
In this situation \Cref{442} reads:


\begin{cor}\label{524}We have 
\begin{equation}\label{446}
F^\mathbf{w}[\mathbf{s}]_{\bullet}V=\sum_{v\in V_k'} F^\mathbf{w}_{\bullet-\deg_{\mathbf{w}[\mathbf{s}]}(v)} F^\mathbf{v}_0 A\cdot \ol v
\end{equation}
if and only if
\begin{equation}\label{447}
\gr^{\mathbf{w}[\mathbf{s}]}({V_k})\cap \gr^{\mathbf{w}[\mathbf{s}]}(L)=\gr^{\mathbf{w}[\mathbf{s}]}({V_k}\cap L).
\end{equation}
\end{cor}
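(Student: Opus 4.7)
The plan is to deduce \Cref{524} by applying \Cref{442} with the substitutions $R := A$ filtered by $F^\mathbf{w}_\bullet A$, $T := F^\mathbf{v}_0 A$ with its induced filtration, $N := A^E$ with the shifted filtration $F^\mathbf{w}[\mathbf{s}]_\bullet A^E$, $P := L$, and $U := V_k = \lideal{F^\mathbf{v}_0 A}{V_k'}$. The quotient $Q = (V_k + L)/L$ then has to be identified with $V$, and the two filtrations $F^{q(V_k)}_\bullet Q$ and $F^s_\bullet Q$ provided by \Cref{442} have to be identified with the two sides of \eqref{446}, so that the hypothesis $\gr^F(U\cap P) = \gr^F U \cap \gr^F P$ translates into \eqref{447}.

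First, I would verify the identification $Q = V$. By construction $V_k'$ is chosen with $k \geq \deg_{\mathbf{w}[\mathbf{s}]}(\widetilde{V'})$, so the classes $\ol{v}$ for $v \in V_k'$ contain $F^\mathbf{w}[\mathbf{s}]_k V \supseteq \ol{V'}$ and therefore generate $V$ as an $F^\mathbf{v}_0 A$-module; hence $\ol{V_k} = V$, i.e., $V_k + L = \lideal{F^\mathbf{v}_0 A}{V'} + L$. Next, using \eqref{445}, the filtration $F^{q(V_k)}_\bullet Q = (F^\mathbf{w}[\mathbf{s}]_\bullet V_k + L)/L$ coincides with the right-hand side of \eqref{446}, while $F^s_\bullet Q = F^\mathbf{w}[\mathbf{s}]_\bullet M \cap V$ equals $F^\mathbf{w}[\mathbf{s}]_\bullet V$ by the very definition in \Cref{298}\eqref{298e}. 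So the equality $F^{q(V_k)}_\bullet Q = F^s_\bullet Q$ of \Cref{442} is exactly \eqref{446}.

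For the graded side, I would note that, under the natural identifications of $\gr^{\mathbf{w}[\mathbf{s}]} V_k$, $\gr^{\mathbf{w}[\mathbf{s}]} L$ and $\gr^{\mathbf{w}[\mathbf{s}]}(V_k \cap L)$ as graded submodules of $\gr^{\mathbf{w}[\mathbf{s}]} A^E$, the condition in \Cref{442} reads precisely as \eqref{447}. An invocation of \Cref{442} then concludes the proof.

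The main delicate point is therefore not the graded comparison itself (which is handed to us by \Cref{442}), but the bookkeeping required to see that the abstract filtrations $F^{q(V_k)}_\bullet Q$ and $F^s_\bullet Q$ really match both sides of \eqref{446}; the essential input there is the compatibility guaranteed by \eqref{445}, which ensures that taking $F^\mathbf{w}[\mathbf{s}]_\bullet$ of the $F^\mathbf{v}_0 A$-submodule $V_k \subseteq A^E$ is generated in each degree by the $\ol{v}$ with $v \in V_k'$ with the expected degree shifts, so that adding $L$ and passing to $M$ yields precisely the sum appearing in \eqref{446}.
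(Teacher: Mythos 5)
Your proposal is correct and matches the paper's argument: the corollary is obtained precisely by specializing \Cref{442} to $R=A$, $T=F^\mathbf{v}_0A$, $N=A^E$ with $F^\mathbf{w}[\mathbf{s}]_\bullet$, $P=L$, $U=V_k$, using \Cref{445} and the choice $k\geq \deg_{\mathbf{w}[\mathbf{s}]}(\tilde{V'})$ to identify $Q$ with $V$ and the two induced filtrations with the two sides of \Cref{446}. No discrepancies.
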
 


A PBW-reduction datum of $\gr^\mathbf{w} A$ is computable by \Cref{892A} due to \Cref{884}.\ref{884f}, and \Cref{905}.
\Cref{423} and \Cref{892B} compute $F^\mathbf{v}_0\gr^\mathbf{w}\! A$-generators of 
$\gr^{\mathbf{w}[\mathbf{s}]}(V_k)\subseteq (\gr^\mathbf{w} \!A)^E$ and $\gr^\mathbf{w}\! A$-generators of $\gr^{\mathbf{w}[\mathbf{s}]}(L)\subseteq (\gr^\mathbf{w}\! A)^E$, respectively. 
The two modules can be intersected by \Cref{401} (see \Cref{439}). 
In the same way we compute $F^\mathbf{v}_0A$-generators of $V_k\cap L$ and \Cref{423} yields $\gr^{\mathbf{w}[\mathbf{s}]}(V_k\cap L)$.
Finally \Cref{447} can be verified using \Cref{410}.

This leads to the following algorithm:


\begin{algorithmbis}[Given a $\mathbf{w}$-weight $\mathbf{v}$ on $A$, an $A$-submodule $L$ and an $F^\mathbf{v}_0 A$-submodule $V$ of a free $A$-module with shift vector $\mathbf{s}$, this algorithm checks whether the quotient and the submodule filtration induced by $F^\mathbf{w}\bracket{s}_\bullet$ on $(V+L)/L$ agree.]
\label[algorithm]{73}
\begin{algorithmic}[1]
\REQUIRE {
Two weight vectors $\mathbf{v},\mathbf{w}\in\ZZ^n $ such that $\mathbf{v}$ is a $\mathbf{w}$-weight and such that \Cref{884} and \Cref{905} are satisfied, a finite set $E$, submodules $L=\lideal{A}{L'}$ and $V= \lideal{F^\mathbf{v}_0 A}{{V'}}\subseteq A^E$ with $L',V'\subseteq A^E$ finite and a shift vector $\mathbf{s}\in \ZZ^E$.
}
\ENSURE \texttt{true} if $F^{\mathbf{w}}[\mathbf{s}]_\bullet^s (V+L/L)=F^{\mathbf{w}}[\mathbf{s}]_\bullet^{q(V)} (V+L/L)$
 and \texttt{false} otherwise.
 \STATE Compute a PBW-reduction datum of $\gr^\mathbf{w} A$ using \Cref{892A}.
 \STATE Find $\gr^\mathbf{w}\!A$-generators $L''$ of $\gr^{\mathbf{w}[\mathbf{s}]} (L)$ by \Cref{892B}. 
\STATE Compute $F^\mathbf{v}_0\gr^\mathbf{w}\!A$-generators $V''$ of $\gr^{\mathbf{w}[\mathbf{s}]} (V)$ using \Cref{423}.
\STATE Find $F^\mathbf{v}_0\gr^\mathbf{w}\!A$-generators $J$ of the intersection $\lideal{F^\mathbf{v}_0\gr^\mathbf{w}A}{V''}\cap \lideal{\gr^\mathbf{w}A}{L''} $ using \Cref{401} and \Cref{439}.
\STATE Compute $F^\mathbf{v}_0 A$-generators $K$ of $L\cap V$ by \Cref{401} and \Cref{439}.
\STATE Determine $F^\mathbf{v}_0\gr^\mathbf{w}\!A$-generators $K'$ of 
$\gr^{\mathbf{w}[\mathbf{s}]} (\lideal{F^\mathbf{v}_0A}{ K})$ using \Cref{423}.
\IF[Check by \Cref{410}.]{$J\subseteq \lideal{F^\mathbf{v}_0\gr^{\mathbf{w}}A}{K'}$}
\RETURN \texttt{true}.
\ENDIF
\RETURN \texttt{false}.
\end{algorithmic}
\end{algorithmbis}


Finally we obtain an algorithm to compute $F^\mathbf{w}[\mathbf{s}]_\bullet V$:


\begin{algorithmbis}[Given a $\mathbf{w}$-weight $\mathbf{v}$ on $A$ and an $F_0^\mathbf{v} A$-submodule $V$ of a finitely presented $A$-module with shift vector $\mathbf{s}$, this algorithm computes $F^\mathbf{w}\bracket{s}_\bullet V$ if this filtration has a finite set of generators.]
\label[algorithm]{431}
\begin{algorithmic}[1]
\REQUIRE {
Two weight vectors $\mathbf{v},\mathbf{w}\in \ZZ^n$ on $A$ such that $\mathbf{v}$ is a $\mathbf{w}$-weight and such that \Cref{884} and \Cref{905} are satisfied, a finite set $E$, an $A$-module $M:=A^E/L$ with $L=\lideal{A}{L'}$, a submodule $V=\lideal{F^\mathbf{v}_0 A}{\overline{V'}}\subseteq M$ with $L',V'\subseteq A^E$ finite and a shift vector $\mathbf{s}\in \ZZ^{E}$.
}
\ENSURE A finite set $G\subseteq A^E$ and $\mathbf{t}\in \ZZ^G$ such that $$F^\mathbf{w}[\mathbf{s}]_\bullet V=\sum_{g\in G} F^\mathbf{w}_{\bullet-\mathbf{t}_g} F^\mathbf{v}_0A \cdot \ol g=\sum_{g\in G} F^\mathbf{w}_{\bullet-\deg_{\mathbf{w}[\mathbf{s}]}(g)} F^\mathbf{v}_0A \cdot \ol g.$$
\STATE Set $k=\deg_{\mathbf{w}[\mathbf{s}]}(\tilde{V'})$.
\STATE Initialize an empty set $G\subseteq A^E$ and a dynamic vector $\mathbf{t}\in \ZZ^G$.
\WHILE[Test by \Cref{73}.]{$F^\mathbf{w}[\mathbf{s}]_\bullet V\neq \sum_{g\in G} F^\mathbf{w}_{\bullet-\mathbf{t}_g} F^\mathbf{v}_0 A\cdot {g}$}
\STATE Compute a finite set $G'\subseteq A^E$ and $\mathbf{t}'\in \ZZ^{G'}$ with
$F^\mathbf{w}[\mathbf{s}]_{k'} V =\sum_{g\in G'} F^\mathbf{w}_{k'-\mathbf{t}_g'} F^\mathbf{v}_0 A\cdot \ol g=\sum_{g\in G'} F^\mathbf{w}_{k'-\deg_{\mathbf{w}[\mathbf{s}]}(g)} F^\mathbf{v}_0 A\cdot \ol g$ for $k'\leq k$ using \Cref{444}.
\STATE Replace $G$ by $G'$ and $\mathbf{t}$ by $\mathbf{t}'$.
\STATE Increase $k$.
\ENDWHILE
\RETURN $G$, $\mathbf{t}$.
\end{algorithmic}
\end{algorithmbis}

\bibliographystyle{elsarticle-harv}
\bibliography{gbmhm}
\end{document}